\documentclass[12pt]{amsart}%
\usepackage{amsfonts}
\usepackage{amsmath}
\usepackage{amssymb}
\usepackage{amsthm}
\usepackage{mathrsfs}
\usepackage{graphicx}%
\usepackage{amsmath}
\usepackage{setspace}
\usepackage{hyperref}
\usepackage{cite}
\usepackage{amsfonts}
\usepackage{algorithmic}
\usepackage{textcomp}
\usepackage{stmaryrd}

\allowdisplaybreaks[4]
\makeatletter
\@addtoreset{equation}{section}
\makeatother

\newtheorem{theorem}{Theorem}[section]

\newtheorem{corollary}[theorem]{Corollary}

\newtheorem{definition}[theorem]{Definition}

\newtheorem{lemma}[theorem]{Lemma}

\newtheorem{proposition}[theorem]{Proposition}
\newtheorem{remark}[theorem]{Remark}

\makeatletter
\def\@makefnmark{}
\makeatother

\begin{document}

\title[Anisotropic minimal surface equation with Dirichlet boundary condition]{Anisotropic minimal surface equation with Dirichlet boundary condition}

\author{Lu Chen}
\address[Lu Chen]{Key Laboratory of Algebraic Lie Theory and Analysis of Ministry of Education, School of Mathematics and Statistics, Beijing Institute of Technology, Beijing
100081, PR China;  Tangshan Research Institute, Beijing Institute of Technology, Tangshan 063000, PR China}
\email{chenlu5818804@163.com}

\author{Jiali Lan}
\address[Jiali Lan]{Key Laboratory of Algebraic Lie Theory and Analysis of Ministry of Education, School of Mathematics and Statistics, Beijing Institute of Technology, Beijing
100081, PR China}
\email{17636268505@163.com}

\author{Haolin Liu}
\address[Haolin Liu]{Key Laboratory of Algebraic Lie Theory and Analysis of Ministry of Education, School of Mathematics and Statistics, Beijing Institute of Technology, Beijing
100081, PR China}
\email{haolinliu2023@126.com}

\address{}

\keywords{Anisotropic minimal surface equation; Dirichlet boundary condition; anisotropic mean curvature.}
\thanks{The first author was partly supported by the  National Natural Science Foundation of China (No. 12271027) and Hebei Natural
Science Foundation (No. A2025105003). }

\begin{abstract}
This paper investigates the Dirichlet problem for the anisotropic minimal surface equation in a bounded domain. Under the natural assumption of non-negative boundary anisotropic mean curvature, we establish the unique solvability of the Dirichlet problem for   continuous boundary data. To achieve this, an essential a priori gradient estimate is established, which also allows us to prove a weak version of Bernstein's theorem for entire solutions under a sharp, one-sided linear growth assumption. Moreover, using the direct method in the calculus of variations, we prove the existence and local Lipschitz regularity of generalized minimizers in $BV(\Omega)$ with $L^1(\partial\Omega)$ boundary data. We also find that this variational formulation naturally yields a Neumann-type boundary condition, geometrically explaining why the Neumann problem requires no boundary curvature constraints.
\end{abstract}

\maketitle

 \section{Introduction and Main Results}


In this paper, we   consider the graph of a function $u(x)$ defined on an open set $\Omega\subset\mathbb{R}^n$, such a graph is usually referred to as a non-parametric surface. The non-parametric anisotropic area  is given by
\begin{equation}\label{e1.5}\mathcal{A}_{F}(u,\Omega)=\int_{\Omega}F(Du,-1)dx,\end{equation}
where $F:\mathbb{R}^{n+1}\to[0,+\infty)$ is non-negative, convex, and positively homogeneous function of degree one (see Section 2).  The density $F$ geometrically represents a Minkowski norm on $\mathbb{R}^{n+1}$ and physically models the direction-dependent surface tension of an interface. In materials science, this functional arises naturally to describe the surface free energy of crystalline solids, where different crystallographic planes exhibit distinct packing densities and atomic bonds, leading to the celebrated Wulff construction for equilibrium crystal shapes.

\medskip
The anisotropic Bernstein problem asks whether critical points of $\mathcal{A}_{F}$ defined on all of $\mathbb{R}^n$  are necessarily affine functions.  In the case of the area functional $F(\xi) = |\xi|$, it is known through  pioneering works   of Bernstein, Fleming \cite{F62}, De Giorgi \cite{DG65}, Almgren \cite{AF66}, Simons \cite{S68}, and Bombieri-De Giorgi-Giusti \cite{BDGG} that the answer is positive if and only if $n \leq 7$.
For general uniformly elliptic integrands, it is known that the answer is positive in dimension $n = 2$ by  Jenkins \cite{J} and in dimension $n = 3$ by   Simon \cite{S}.
 Recently, Mooney and Yang \cite{MY} constructed nontrivial entire anisotropic minimal graphs for $n=4$, demonstrating  that such non-parametric examples can exhibit genuinely nonlinear behavior in low dimensions.

\medskip
The corresponding Euler-Lagrange equation associated with the anisotropic area functional $\mathcal{A}_F$ is   given by the anisotropic minimal surface equation:
\[\operatorname{div}\left(D_{\xi'}F(Du,-1)\right)=0,\]
where $D_{\xi'}F=(F_{\xi_1},\cdots, F_{\xi_{n}})$. Equivalently, this equation can be rewritten as
\begin{equation}\label{e1.6}F_{\xi_i\xi_j}(Du,-1)u_{ij}=0,\end{equation}
  where we sum over $i,j=1,\dots,n$  according to the Einstein convention. A natural question is that of the existence of solutions of the Dirichlet problem, namely of solutions of the anisotropic minimal surface equation \eqref{e1.6} taking prescribed values on the boundary $\partial\Omega$.

\medskip
For the isotropic case $F(\xi)=|\xi|$, the Euler-Lagrange equation reduces to the  classical minimal surface equation
 \begin{equation}\label{e1.4}\operatorname{div}\left(\frac{Du}{\sqrt{1+|Du|^2}}\right)=0.\end{equation}
 The solvability of its Dirichlet problem has a celebrated history. For $n=2$, results of Bernstein \cite{B}, Haar \cite{H}, Rad\'o \cite{RT}, Finn \cite{F65}, and Finn-Osserman \cite{FO} show that  the Dirichlet problem is well-posed for arbitrary continuous boundary data if and only if $\Omega$ is convex. Finally in 1968, Jenkins-Serrin \cite{JS} proved that the Dirichlet problem in $n$ dimensions is always solvable if the mean curvature of $\partial\Omega$ is nowhere negative, using   interior gradient estimates. The interior gradient estimates for the minimal surface equation was obtained by Finn \cite{F54} for $n=2$ and by Bombieri-De Giorgi-Miranda \cite{BDM} for higher dimensions. For equations with general mean curvature, we refer to Ladyzhenskaya-Uraltseva \cite{LU}, Trudinger \cite{T} and Simon \cite{S76}. All of these methods rely fundamentally on test function arguments and the resulting Sobolev inequalities. A more detailed history can be found in Gilbarg-Trudinger \cite{GT}.

\medskip
The purpose of this paper is to prove the existence of solutions to the Dirichlet problem for the anisotropic minimal surface equation \eqref{e1.6}. To properly define the anisotropic mean curvature of the boundary $\partial\Omega$, we introduce $\hat{F}(p)=F(p,0)$ for $p\in\mathbb{R}^n$. Wang-Xia \cite{WX} calculated the anisotropic mean curvature of a level set of $u$. More precisely, they proved that
\[H_{\hat{F}}(S_t)=\hat{F}_{\xi_i\xi_j}(Du)u_{ij},\]
where $S_t=\{x\in\overline{\Omega}: u(x)=t\}$. In particular, the anisotropic mean curvature of $\partial\Omega$ is
\[H_{F}(\partial\Omega)=F_{\xi_i\xi_j}(Dd,0)d_{ij},\]
where $d(x)$ denotes the distance from $x$ to $\partial\Omega$. Thus the assumption
\[ F_{\xi_i\xi_j}(Dd,0)d_{ij}\le0\]
is exactly the nonnegativity of the anisotropic mean curvature of $\partial\Omega$ with respect to the outward normal.

\medskip

This observation enables us to prove that the anisotropic Dirichlet problem is solvable provided that the anisotropic mean curvature of $\partial\Omega$ is everywhere non-negative.   Throughout this paper, we further assume $F \in C^4(\mathbb{R}^{n+1}\setminus\{0\})$ and that $\operatorname{Hess}(F^2)$ is positive definite on $\mathbb{R}^{n+1}\setminus\{0\}$ (see Section 2 for details). We now state our main results.

\begin{theorem}\label{theorem1.1}
  Let $\varphi$ be a $C^2$ function in $\mathbb{R}^n$ and let $\Omega\subset\mathbb{R}^n$  be a bounded open set with $C^2$-boundary of non-negative anisotropic mean curvature:
\begin{equation}\label{e1.1}
F_{\xi_i\xi_j}(\nabla d,0)d_{ij}\leq 0,
\end{equation}
where $i$ and $j$  range from $1$ to $n$ under the Einstein summation convention, denote $F_{\xi_i}(\xi)=\frac{\partial F}{\partial\xi_i}$ and $u_i=\frac{\partial u}{\partial x_i}$. Then, the following Dirichlet problem for the anisotropic minimal surface equation:
\begin{equation}\label{e1.2}\begin{cases}
F_{\xi_i\xi_j}(Du,-1)u_{ij}=0&\text{in }\Omega\\
u=\varphi&\text{on }\partial\Omega.
\end{cases}\end{equation}
is uniquely solvable in $C^{0,1}(\overline{\Omega})\cap C^2(\Omega)$.
\end{theorem}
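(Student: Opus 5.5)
The plan is the classical Leray--Schauder continuity scheme as in Gilbarg--Trudinger (Chapter 11, Theorem 11.8). For $\sigma\in[0,1]$ I will consider the family
\[
F_{\xi_i\xi_j}(Du,-1)u_{ij}=0 \text{ in }\Omega,\qquad u=\sigma\varphi \text{ on }\partial\Omega .
\]
Since $\operatorname{Hess}(F^2)>0$ and $F$ is $1$-homogeneous, the matrix $a^{ij}(p)=F_{\xi_i\xi_j}(p,-1)$ is positive definite for every $p\in\mathbb{R}^n$. So the operator is elliptic (not uniformly), quasilinear, with coefficients independent of $x$ and $u$. Existence of $C^{2,\alpha}(\overline\Omega)$ solutions then reduces to an a priori $C^1(\overline\Omega)$ bound, uniform in $\sigma$. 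Once that bound is in hand, ellipticity becomes uniform on the relevant gradient range, the Ladyzhenskaya--Ural'tseva/De Giorgi--Nash theory gives $C^{1,\beta}$ bounds, and Schauder theory closes the fixed-point argument. Uniqueness, and the comparison principle used throughout, follow from the divergence form $\operatorname{div}(D_{\xi'}F(Du,-1))=0$. Because the vector field $p\mapsto D_{\xi'}F(p,-1)$ is strictly monotone, the difference of two solutions satisfies a linear elliptic equation, and the maximum principle applies.

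The a priori estimates come in three steps.

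\emph{Height bound.} This is immediate from the maximum principle: $\sup|u|\le\sup_{\partial\Omega}|\varphi|$.

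\emph{Boundary gradient bound.} This is the step where \eqref{e1.1} enters. I will build upper and lower barriers $w^\pm=\sigma\varphi\pm\psi(d)$ near $\partial\Omega$, with $\psi(d)=\frac1\nu\log(1+kd)$ on a strip $0<d<\delta$. Writing $Dw=\sigma D\varphi+\psi' Dd$, we have $\psi'\to\infty$ for large $k$. By homogeneity, $F_{\xi_i\xi_j}(Dw,-1)=\psi'^{-1}F_{\xi_i\xi_j}(Dd+\psi'^{-1}(\sigma D\varphi,-1),0)$, which tends to $\psi'^{-1}F_{\xi_i\xi_j}(Dd,0)$. The term $\psi' F_{\xi_i\xi_j}d_{ij}$ therefore contributes a sign $\le0$ in the leading order through \eqref{e1.1}, up to a lower-order error of size $O(1)$. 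Meanwhile the $\psi''d_id_j$ term gives $\psi''\psi'^{-1}F_{\xi_i\xi_j}(\cdot)d_id_j$.

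This is the main obstacle. Since $F_{\xi_i\xi_j}(\xi)\xi_j=0$, the quantity $F_{\xi_i\xi_j}(Dd,0)d_id_j$ vanishes. The negative $\psi''$ term has to be extracted from the first-order correction in $\psi'^{-1}(\sigma D\varphi,-1)$, as in the isotropic Jenkins--Serrin computation. Concretely, the degeneracy should be quantified by
\[
F_{\xi_i\xi_j}(p,-1)d_id_j\ \gtrsim\ |p|^{-3}
\]
for $p\approx\psi'Dd$. This follows by applying the homogeneity identities and positivity of $\operatorname{Hess}F^2$ on the $(n+1)$-dimensional space, since $(Dd,0)$ is not parallel to $(p,-1)$. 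With $\psi''=-\nu\psi'^2$, the good term is then of order $\nu\psi'^{2}\cdot\psi'^{-3}\cdot\psi'^{?}$. I will tune $\nu,k,\delta$ so that it dominates the $O(\psi'^{-1})$ errors coming from $D^2\varphi$ and from the non-leading part of the curvature term. On $d=\delta$, choosing $\psi(\delta)\ge2\sup|\varphi|$ makes $w^\pm$ comparison functions, so $|Du|\le C$ on $\partial\Omega$.

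\emph{Global gradient bound.} I will pass from the boundary gradient bound to a global one by the translation argument. For a unit vector $e$ and small $h$, both $u(x+he)$ and $u(x)$ solve the same $x$-independent equation. Comparison on $\Omega\cap(\Omega-he)$ gives $\sup_\Omega|Du|\le\sup_{\partial\Omega}|Du|$, which is a maximum principle for the gradient.

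For general continuous $\varphi$ the same scheme applies with $C^2$ approximations $\varphi_m\to\varphi$ uniformly. Since Theorem~\ref{theorem1.1} assumes $\varphi\in C^2(\mathbb{R}^n)$, the solution obtained above already lies in $C^{0,1}(\overline\Omega)\cap C^2(\Omega)$, by the barrier Lipschitz bound together with interior regularity. Finally, a $C^2(\overline\Omega)$ assumption on $\varphi$ can be relaxed to $C^2$ on a neighborhood, which is what the theorem requires.
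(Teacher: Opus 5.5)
Your proposal is sound in outline, but it takes a genuinely different route from the paper. You run the classical PDE continuity method (Leray--Schauder as in Gilbarg--Trudinger, Thm.~11.8). That route needs a height bound, a barrier-based boundary gradient bound, a global gradient bound by translation, global $C^{1,\beta}$ estimates and boundary Schauder theory. The paper never solves the equation up to the boundary. It minimizes $\mathcal{A}_F$ over the compact convex class $L_k(\Omega,\varphi)$ of $k$-Lipschitz functions (Proposition~\ref{proposition3.1}). It gets comparison for variational super/subsolutions directly from strict convexity (Lemma~\ref{lemma3.1}). Using the same translation trick (Lemma~\ref{lemma3.2}) and essentially your barrier ($\varphi\pm\varepsilon\log(1+Kd)$ with $\varepsilon$ small, i.e.\ your $1/\nu$ with $\nu$ large), it shows the constrained minimizer has Lipschitz constant $<k$, hence is an unconstrained minimizer (Proposition~\ref{proposition3.2}). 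The anisotropic content (barrier plus translation) is shared; only the existence mechanism differs. The paper's route needs only $d\in C^2$ in a collar and $\varphi\in C^2$, lands directly in $C^{0,1}(\overline\Omega)$, and needs only interior regularity afterwards. Yours gives $C^{2,\alpha}(\overline\Omega)$ solutions for $C^{2,\alpha}$ data, at the price of heavier machinery and its hypotheses.

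Two points need tightening.

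\emph{Barrier power counting.} Cauchy--Schwarz on the projection of $(Dd,0)$ onto $(p,-1)^{\perp}$ gives $F_{\xi_i\xi_j}(p,-1)d_id_j\ge\lambda(1+|p|^2)^{-3/2}$ for every $p$. With $\psi''=-\nu\psi'^2$, the good term is therefore $\le -c\nu\psi'^{-1}$, so your missing exponent is $0$. This term has the \emph{same} order as the $D^2\varphi$ error, and what wins is choosing $\nu$ large, not a power of $\psi'$. For the same reason the curvature term cannot carry the $O(1)$ error you mention. After homogeneity it is a bounded positive factor times $F_{\xi_i\xi_j}(Dd(x),0)d_{ij}(x)+O(\psi'^{-1})$, so you need \eqref{e1.1} at interior points of the collar, not only on $\partial\Omega$. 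This holds: in the principal frame at the nearest point $y$, $D^2d(x)$ is diagonal with entries $-\kappa_i/(1-\kappa_i d)$ and $0$. Since $\kappa_i/(1-\kappa_id)\ge\kappa_i$ and the tangential diagonal entries of $F_{\xi\xi}(Dd,0)$ are positive, you get $F_{\xi_i\xi_j}(Dd(x),0)d_{ij}(x)\le F_{\xi_i\xi_j}(Dd(y),0)d_{ij}(y)\le 0$. Alternatively, use $|D^2d(x)-D^2d(y)|\le Cd$ together with $d\le\nu^{-1}\psi'^{-1}$.

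\emph{Regularity of the data.} Thm.~11.8 of Gilbarg--Trudinger requires $\partial\Omega\in C^{2,\alpha}$ and $\varphi\in C^{2,\alpha}(\overline\Omega)$, while Theorem~\ref{theorem1.1} assumes only $C^2$. Your closing remark does not address the domain, so you need an approximation step. Mollify $\varphi$ while keeping uniform $C^2$ bounds. Take smooth domains $\Omega_m\to\Omega$ in $C^2$ whose anisotropic mean curvature is only $\ge-\epsilon_m$, with $\epsilon_m\to 0$. The barrier tolerates this: with $\nu,k$ fixed, $\psi'\le k/\nu$ on the collar, so the size $c\nu\psi'^{-1}\ge c\nu^2/k$ of the good term absorbs the extra $O(\epsilon_m)$ for $m$ large. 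You then pass to the limit using the uniform Lipschitz bound and interior estimates.
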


The theorem above is stated under the assumption that $\varphi$ is of class $C^2$. However, by proving an a priori estimate for the gradient, it actually suffices for $\varphi$ to be continuous on $\partial\Omega$.  The estimate is obtained via Moser iteration, following the approach in \cite{S76}.

\begin{theorem}\label{theorem1.2}
 Let $\Omega$ be a bounded domain in $\mathbb{R}^n$ with $C^2$-boundary $\partial\Omega$ of non-negative anisotropic mean curvature, and let $\varphi$ be a continuous function on $\partial\Omega$. Then, the Dirichlet problem for the anisotropic minimal surface equation \eqref{e1.2} has a unique solution $u$ belonging to $C^0(\overline{\Omega})\cap C^2(\Omega)$.
\end{theorem}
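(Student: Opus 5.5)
We derive Theorem \ref{theorem1.2} from Theorem \ref{theorem1.1} by approximation, along the lines of Jenkins--Serrin and Gilbarg--Trudinger (Theorem 16.10 there). The two ingredients are an interior gradient estimate that does not depend on the boundary data, and uniform barriers at each boundary point.

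\textbf{Step 1 (approximation).} We extend $\varphi$ continuously to $\mathbb{R}^n$, for instance by Tietze's theorem. We then choose $\varphi_k\in C^2(\mathbb{R}^n)$ with $\varphi_k\to\varphi$ uniformly on $\partial\Omega$, say with $\sup_{\partial\Omega}|\varphi_k-\varphi|\le 1/k$. Theorem \ref{theorem1.1} gives solutions $u_k\in C^{0,1}(\overline\Omega)\cap C^2(\Omega)$ of \eqref{e1.2} with data $\varphi_k$. Equation \eqref{e1.6} is quasilinear, elliptic (because $F$ is convex and Hess$(F^2)>0$), and has no dependence on $u$. Hence the comparison principle applies to differences: writing $u_k-u_l$ as a solution of a linear elliptic equation $a^{ij}(x)(u_k-u_l)_{ij}+b^i(x)(u_k-u_l)_i=0$, obtained from the mean value theorem, we get
\[
\sup_\Omega|u_k-u_l|\le\sup_{\partial\Omega}|\varphi_k-\varphi_l|.
\]
So $\{u_k\}$ is uniformly Cauchy in $C^0(\overline\Omega)$ and bounded by $M=\sup|\varphi|+1$.

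\textbf{Step 2 (interior estimates).} The key tool is the a priori interior gradient estimate mentioned after Theorem \ref{theorem1.1}, obtained by Moser iteration as in \cite{S76}. It should take the form $|Du(x_0)|\le C(n,F,M,\operatorname{dist}(x_0,\partial\Omega))$ for any $C^2$ solution with $|u|\le M$. The proof works on the graph: $w=\log\sqrt{1+|Du|^2}$, or the anisotropic analogue $\langle \nu,e_{n+1}\rangle^{-1}$ with $\nu$ the anisotropic normal, is a subsolution of a Jacobi-type inequality. One then uses a Michael--Simon Sobolev inequality on the graph, with cutoffs depending on $u$ to absorb the height. This is the main obstacle: one must check that the anisotropic Jacobi operator still gives a subsolution inequality with uniform ellipticity constants, which holds because $F\in C^4$ and Hess$(F^2)>0$. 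Given the gradient bound, the equation becomes uniformly elliptic on each $\Omega'\Subset\Omega$. De Giorgi--Nash applied to the differentiated equation gives $C^{1,\alpha}$ bounds, and Schauder theory then gives $C^{2,\alpha}(\Omega')$ bounds uniform in $k$. By Arzel\`a--Ascoli and a diagonal argument, $u_k\to u$ in $C^2_{loc}(\Omega)$, so $u\in C^2(\Omega)$ solves \eqref{e1.6}. By Step 1 the convergence is uniform on $\overline\Omega$, so $u\in C^0(\overline\Omega)$ and $u=\varphi$ on $\partial\Omega$.

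\textbf{Step 3 (uniqueness).} Let $u,v\in C^0(\overline\Omega)\cap C^2(\Omega)$ both solve the problem. Their difference satisfies a linear elliptic equation without zeroth-order term in $\Omega$. The weak maximum principle, applied on $\Omega_\varepsilon=\{u-v>\varepsilon\}\Subset\Omega$, rules out a positive interior maximum, so $u\le v$; by symmetry $u=v$. This argument needs only interior regularity and continuity up to the boundary. The curvature condition \eqref{e1.1} enters only through Theorem \ref{theorem1.1}, where it supplies the barriers. Those barriers control $u_k$ on $\partial\Omega$ only through $\varphi_k$, so the uniform convergence of Step 1 makes any continuity-modulus barrier unnecessary here.
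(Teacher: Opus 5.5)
Your proposal is correct and follows the paper's proof. Both approximate $\varphi$ by $C^2$ data, solve with Theorem~\ref{theorem1.1}, get uniform convergence on $\overline\Omega$ from the maximum principle, and obtain $C^2_{\mathrm{loc}}$ compactness from the interior gradient estimate (Proposition~\ref{proposition4.1}, which with $3\rho=\operatorname{dist}(x_0,\partial\Omega)$ gives exactly your form $|Du(x_0)|\le\exp\{c_5(1+6M/\operatorname{dist}(x_0,\partial\Omega))\}$) plus elliptic regularity for the differentiated equation. The differences are minor: you derive the $C^0$ contraction from the classical linearized comparison principle rather than the variational estimate \eqref{e3.1}, you make explicit the De Giorgi--Nash step that the paper's appeal to Lemma~\ref{lemma4.3} tacitly needs (the coefficients $F_{\xi_i\xi_j}(Du_j,-1)$ must first be known to be H\"older), and you supply the uniqueness argument that the paper's proof leaves implicit.
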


\begin{remark}
To prove Theorem \ref{theorem1.2}, we require an a priori gradient estimate. Let $\nu = \frac{(Du,-1)}{\sqrt{1+|Du|^2}}$ be the unit normal to the graph of $u$. For the classical minimal surface equation,  by choosing suitable test functions  one obtains an $L^\infty$-$L^1$ estimate. Then the $L^1$-integral is controlled using the identity
\[\delta_i\delta_i w=|\delta w|^2+c^2,\]
which follows from a direct computation for the standard tangential operators $\delta_i=D_i-\nu_i\nu_h D_h$ and $w=-\log(-\nu_{n+1})$.

  In the anisotropic setting,  we introduce the anisotropic tangential operator $\delta_i^F = F(\nu) D_i - \nu_i F_{\xi_j}(\nu) D_j.$ Such a pointwise estimate cannot be obtained by simple direct calculations   due to the complicated nonlinear coupling of the metric $F$, which  forces us to adopt deeper anisotropic geometric and variational structures, specifically the spectral properties of the anisotropic Jacobi operator.
Since suitable test functions are difficult to find in the anisotropic case, we first derive a local $L^\infty$-$L^1$ estimate via Moser's iteration following the strategy of \cite{S76}. We then control the $L^1$-norm using the the anisotropic Jacobi equation established in \cite{CL}  for the vertical translation Jacobi field $\nu_{n+1}$:
\[\operatorname{div}\left(F_{\xi_i\xi_j}(\nu)(\nabla   \nu_{n+1})_j\right)+|S_F|^2\nu_{n+1}=0,\]
  where $S_F$ is the $F$-anisotropic shape operator on the graph of $u$. This provides a crucial divergence-type  equality for $w $:
\[\operatorname{div}\left(F_{\xi_i\xi_j}(\nu)  w_j\right)=F_{\xi_i\xi_j}(\nu) w_{i}  w_j+|S_F|^2,\]
which together with the ellipticity of $F$, yields the necessary integral control  for the gradient estimate.

\end{remark}

As a consequence of a priori estimate for the gradient, we have a weaker forms of Bernstein's theorem.
\begin{theorem}\label{corollary4.1}
Let $u$ be a solution of the anisotropic minimal surface equation \eqref{e4.15} in $\mathbb{R}^n$. Suppose that for any $x\in\mathbb{R}^n$,
\begin{equation}\label{e4.27}
u(x)\leq C(1+|x|),
\end{equation}
for some constant $C$. Then $u$ is an affine function.
\end{theorem}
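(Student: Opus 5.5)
The plan is to follow the classical route for the minimal surface equation (Bombieri--De Giorgi--Miranda, see also \cite{S76}): use the interior gradient estimate obtained from the Moser iteration together with the Jacobi-type identity for $w=-\log(-\nu_{n+1})$. I expect that estimate to take the form
\[
|Du(x_0)|\le C_1\exp\Big(C_2\,\frac{\sup_{B_R(x_0)}u-u(x_0)}{R}\Big),
\]
where $C_1,C_2$ depend only on $n$ and $F$ (through its ellipticity constants), and the estimate is scale invariant. The point is that only a one-sided oscillation enters. This is exactly what makes the one-sided growth hypothesis \eqref{e4.27} sufficient.

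\textbf{Step 1: global Lipschitz bound.} Fix $x_0\in\mathbb{R}^n$ and let $R\to\infty$. By \eqref{e4.27}, $\sup_{B_R(x_0)}u\le C(1+|x_0|+R)$, hence
\[
\frac{\sup_{B_R(x_0)}u-u(x_0)}{R}\le C\,\frac{1+|x_0|+R}{R}+\frac{|u(x_0)|}{R}\to C .
\]
Therefore $|Du(x_0)|\le C_1e^{C_2C}$ for every $x_0$, so $Du$ is bounded on $\mathbb{R}^n$. If the a priori estimate is stated instead with $\sup_{B_R}(u-u(x_0))$ or with $M=\sup_{B_R}u$ after normalizing $u(x_0)=0$, the same argument applies after subtracting a constant. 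Note that the equation \eqref{e1.6} is invariant under $u\mapsto u+c$ and under rescaling $u_R(y)=u(x_0+Ry)/R$.

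\textbf{Step 2: from bounded gradient to affine.} Once $|Du|\le L$, the coefficients $a_{ij}(x)=F_{\xi_i\xi_j}(Du,-1)$ are uniformly elliptic with constants depending on $L$. This uses the positive definiteness of $\operatorname{Hess}(F^2)$: on the compact set $\{(p,-1):|p|\le L\}$, the matrix $F_{\xi_i\xi_j}$ restricted to the first $n$ variables is positive definite.

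Next, differentiate the divergence form $\operatorname{div}(D_{\xi'}F(Du,-1))=0$ in direction $e_k$. Each derivative $v=u_k$ then solves $\partial_i\big(a_{ij}(x)\partial_j v\big)=0$, a uniformly elliptic divergence-form equation with bounded measurable coefficients. Since $v$ is bounded, De Giorgi--Nash gives the Hölder decay
\[
\operatorname{osc}_{B_r}v\le C\Big(\frac rR\Big)^\alpha\operatorname{osc}_{B_R}v\le 2CL\Big(\frac rR\Big)^\alpha .
\]
Letting $R\to\infty$ shows $v$ is constant. Hence $Du$ is constant, and $u$ is affine.

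\textbf{Main obstacle.} The crux is Step 1: the gradient estimate must depend only on the one-sided quantity $\sup u-u(x_0)$, not on $\operatorname{osc}u$. In the Moser-iteration argument this one-sidedness comes from the choice of cutoff. I would multiply by a test function built from $(u-u(x_0))$, or use a weight of the form $(M-u)_+$ with $M=\sup_{B_R}u$, so that only the upper bound is used. This mirrors the classical proof that the isotropic estimate involves only $\sup_{B_R}(u-u(x_0))$.

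If the a priori estimate established earlier is instead two-sided, I would first check that its proof goes through with $M-u\ge0$ as the only quantity entering. Failing that, I would apply it to $-u$ after a reflection $F(\xi',\xi_{n+1})\mapsto F(\xi',-\xi_{n+1})$, which preserves all the hypotheses on $F$.
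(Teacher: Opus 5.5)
Your proposal is correct and follows the paper's proof step for step. The paper's Proposition~\ref{proposition4.1} is exactly the one-sided estimate you anticipate, in terms of $\sup_{B_{3\rho}(x_0)}u-u(x_0)$; letting $\rho\to\infty$ gives the global gradient bound; and each $\partial_s u$ then solves a uniformly elliptic divergence-form equation. The only difference is that the paper concludes via Harnack's inequality applied to the nonnegative solution $\partial_s u-\inf\partial_s u$, whereas you use De Giorgi--Nash oscillation decay; both are the same Liouville argument for bounded solutions.
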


\begin{remark}\label{remark4.1}
We emphasize that the one-sided linear growth assumption \eqref{e4.27} is sharp. Indeed, Mooney-Yang \cite{MY} constructed smooth, nonlinear entire anisotropic minimal graphs over $\mathbb{R}^4$ satisfying
\[ \sup_{B_r} u \sim r^{1+\mu} \]
for any $\mu \in (0, 1/2)$. Since $\mu > 0$ can be chosen arbitrarily small, these counterexamples demonstrate that the linear growth condition is the critical threshold for the validity of the anisotropic Bernstein-type theorem.
\end{remark}

The restriction on the anisotropic mean curvature of $\partial\Omega$ can also be avoided by a suitable generalization of the Dirichlet problem. More precisely, we introduce the boundary condition in the functional under consideration as a penalization, and we look for a minimum of \[\mathcal{E}_{F}(v,\Omega)=\int_{\Omega}F(Dv,-1)dx+\int_{\partial\Omega}F(\nu,0)|v-\varphi|d\mathcal{H}^{n-1},\]
where $\nu$ is the normal vector of $\partial\Omega$. A solution of the Dirichlet problem \eqref{e1.2} also minimizes $\mathcal{E}_{F}$. The new functional always has a minimum in $BV(\Omega)$, independently of the anisotropic mean curvature of the boundary.  Moreover, we can prove the following regularity theorem.

\begin{theorem}\label{theorem1.3}
 Let $\Omega$ be a bounded domain in $\mathbb{R}^n$ with $C^1$-boundary $\partial\Omega$, and let $\varphi\in L^1(\partial\Omega)$. Then $\mathcal{E}_{F}(v,\Omega)$ admits a minimizer $u \in BV(\Omega)$. Moreover, $u$ is locally Lipschitz continuous in $\Omega$.
\end{theorem}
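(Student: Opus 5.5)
The plan has two parts: existence by the direct method, and local Lipschitz regularity by comparison with the classical solutions of Theorem \ref{theorem1.2} on small balls.

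\textbf{Existence.} Since $F$ is positive on the unit sphere, there are constants with $\lambda|\xi|\le F(\xi)\le\Lambda|\xi|$. Hence $\mathcal{E}_F(v,\Omega)\ge \lambda\int_\Omega|Dv| + c\int_{\partial\Omega}|v-\varphi|\,d\mathcal{H}^{n-1}$, where $c=\min F(\nu,0)>0$.

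1. To get coercivity in $BV$, I use the trace inequality $\int_{\partial\Omega}|v|\le C\big(\int_\Omega|Dv|+\int_\Omega|v|\big)$ together with a Poincaré-type inequality $\|v\|_{L^1(\Omega)}\le C\big(|Dv|(\Omega)+\|v\|_{L^1(\partial\Omega)}\big)$. The latter holds for $C^1$ bounded domains, proved by the usual contradiction and compactness argument. Together these bound $\|v\|_{BV}$ along a minimizing sequence.
2. By $BV$ compactness, a subsequence converges in $L^1(\Omega)$ to some $u$.
3. The main subtlety is lower semicontinuity, because traces are not continuous under $L^1$ convergence. I will follow Giusti's trick. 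Extend $\varphi$ to a $W^{1,1}$ function $\tilde\varphi$ on a neighborhood $\Omega'\supset\overline\Omega$, and extend each $v$ by $\tilde\varphi$ outside $\Omega$. Then
\[
\mathcal{E}_F(v,\Omega)=\int_{\Omega'}F(Dv,-1)-\int_{\Omega'\setminus\overline\Omega}F(D\tilde\varphi,-1).
\]
This identity holds because the jump part across $\partial\Omega$ contributes $F\big((v-\varphi)\nu,0\big)$ by homogeneity; the sign conventions must be chosen so that this equals $F(\nu,0)|v-\varphi|$, or $F$ is symmetrized on the $\xi'$ slot.
4. The functional $\int_{\Omega'} F(Dv,-1)$, understood as the convex functional of a measure (the anisotropic area of the graph), is lower semicontinuous under $L^1$ convergence by Reshetnyak's theorem. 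So $u$ is a minimizer.

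\textbf{Interior regularity.} Here I follow Giusti's approach for the isotropic case.

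1. First show $u\in L^\infty_{loc}(\Omega)$. The subgraph of $u$ minimizes the anisotropic perimeter $\int F(\nu_E)$ locally in $\Omega\times\mathbb{R}$. Density estimates for anisotropic perimeter minimizers then give local boundedness.
2. Fix a ball $B_R(x_0)\Subset\Omega$. Balls have positive anisotropic mean curvature, so they satisfy \eqref{e1.1}. Mollify $u$ so that the traces on $\partial B_R$ converge in $L^1$. Solve the Dirichlet problem on $B_R$ via Theorem \ref{theorem1.2}, obtaining solutions $u_k$ with these boundary values.
3. The key step, and the main obstacle, is to show that $u_k\to u$ on $B_{R}$. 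Each $u_k$ minimizes the localized penalized functional on $B_R$ with data $u_k|_{\partial B_R}$. By uniqueness, which follows from strict convexity of $F(\cdot,-1)$ (from $\operatorname{Hess}F^2>0$) applied along the graph direction, the limit must coincide with $u$ restricted to $B_R$. This requires showing that $u|_{B_R}$ minimizes the penalized functional with its own trace as data, and that the minimizer is unique there. Uniqueness of $BV$ minimizers of anisotropic area with trace penalty is delicate; I expect to argue instead via the comparison principle for minimizers, or via a convergence argument through sup/inf envelopes.
4. Once $u_k\to u$, the a priori interior gradient estimate behind Theorem \ref{theorem1.2}, namely the Moser-iteration estimate controlling $|Du_k(x)|$ on $B_{R/2}$ by $\exp\big(C\sup_{B_R}|u_k|/R\big)$, applies. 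The sup bound is uniform by the maximum principle and step 1. Hence the $u_k$ are uniformly Lipschitz on $B_{R/2}$, and passing to the limit shows $u$ is Lipschitz there.
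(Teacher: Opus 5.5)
Your existence argument and the overall shape of the regularity proof match the paper: the subgraph minimizes anisotropic perimeter, which gives local boundedness; you then solve classical Dirichlet problems on a ball, get uniform interior gradient bounds from Proposition~\ref{proposition4.1}, and extract a limit $v$. The genuine gap is the step you flagged yourself, identifying $v$ with $u$ on $B_R$. Strict convexity of $p\mapsto F(p,-1)$ does \emph{not} give uniqueness of minimizers in $BV$. Write $Du=\nabla u\,dx+D^su$. The singular part enters $\mathcal{A}_F$ only through $\int F(D^su,0)$, which is positively $1$-homogeneous and not strictly convex. Comparing two minimizers through their midpoint therefore only yields $\nabla u=\nabla v$ a.e. That says $D(u-v)=D^su$ is a singular measure, not that $u-v$ is constant. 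Your fallback routes have the same defect. The lattice inequality for $\min\{u,u^{+}\}$ and $\max\{u,u^{+}\}$ only shows that $\min\{u,u^{+}\}$ is another minimizer, and you still need uniqueness to conclude $u\le u^{+}$. A calibration argument with the field $DF(Du^{+},-1)$ could work, but you do not set one up.

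The missing idea is the paper's Lemma~\ref{lemma5.9}: an interior minimizer has no singular part, i.e.\ $u\in W^{1,1}_{loc}(\Omega)$. The paper derives it from what you already have in Step 1, in three steps.
\begin{enumerate}
\item Since $U$ minimizes anisotropic perimeter, $\partial U$ is smooth off a closed set $\Sigma$ with $\mathcal{H}^{n-2}(\Sigma)=0$.
\item The strong maximum principle, applied to the $x_{n+1}$-derivative of a horizontal local parametrization of $\partial U$, excludes vertical pieces (using local boundedness). Hence $\nu_{n+1}<0$ on $\partial U\setminus\Sigma$.
\item So $D^su$ is carried by the Lebesgue-null set $\operatorname{proj}\Sigma$, where the anisotropic perimeter of $U$ vanishes, and $D^su=0$.
\end{enumerate}
Only then does strict convexity give Proposition~\ref{proposition5.4}, $v=u+c$. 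The paper fixes $c=0$ by a barrier argument: using $u^{\pm}$ built from $C^2$ data that agree with $u$ near regular points of $\partial B_R$, it shows $v$ has trace $u$.

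In your mollification setup that last step is actually cheaper. Choose $R$ with $|Du|(\partial B_R)=0$; then $u|_{B_R}$ minimizes the penalized functional on $B_R$ with data $\operatorname{Tr}u$, at zero boundary cost. The penalized functionals with the mollified data differ from it, uniformly in the competitor, by at most $\beta\|u_{\varepsilon}-\operatorname{Tr}u\|_{L^1(\partial B_R)}$. Lower semicontinuity then shows $v$ is also a minimizer. Once $v=u+c$, minimality forces $|c|\int_{\partial B_R}F(\nu,0)\,d\mathcal{H}^{n-1}=0$, so $c=0$. Without the $W^{1,1}_{loc}$ regularity of $u$, however, your Step 3 does not close.
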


\begin{remark}\label{remark5.2}
In general, the minimizer $u \in BV(\Omega)$ obtained via the direct method is only a generalized solution. Without appropriate curvature constraints on $\partial\Omega$, the trace of $u$ may not coincide with $\varphi$. Consequently, the classical Dirichlet condition is relaxed into a boundary complementarity condition
\[    \langle D_{\xi'} F(Du, -1), \nu_{\Omega} \rangle + F(\nu_{\Omega}, 0)\sigma = 0, \quad \sigma \in \mathrm{Sign}(\mathrm{Tr}\,u - \varphi) \quad \text{on } \partial\Omega.\]
Structurally, this complementarity functions as a nonlinear Neumann-type condition.  This observation explicitly illustrates why the Neumann problem does not demand the rigid curvature constraints inherent to the Dirichlet problem.
\end{remark}


Recently, Cui and Yip \cite{CY} investigated the anisotropic graphical mean curvature flow with Dirichlet boundary data in arbitrary dimensions (see Section 4 of their paper for details). Their approach yields boundary gradient estimates under a weighted mean-convexity condition involving certain constants $\gamma_1$ and $\gamma_2$. These constants depend quantitatively on the deviation of the anisotropic integrand $F$ from the isotropic one, as well as on the tangential $C^2$-norm of the Dirichlet boundary data. More specifically, their assumptions require $F$ to be sufficiently close to the isotropic function and the Dirichlet boundary data to not vary too rapidly along $\partial\Omega$. They further discuss the associated elliptic translator problem and the long-time behavior of the flow.

In the present paper, we focus on the stationary zero-speed case, namely the Dirichlet problem for the anisotropic minimal surface equation. While our result recovers and refines the corresponding zero-speed Dirichlet case in \cite{CY}, the main difference lies in the boundary hypothesis. Instead of imposing a weighted curvature condition, we assume that the natural anisotropic mean curvature of the vertical cylinder $\partial\Omega\times\mathbb{R}$ is non-negative in the sense of Wang-Xia \cite{WX}. This assumption depends solely on $F$ and the geometry of $\partial\Omega$, clearly independent of the prescribed boundary data. Hence, within the stationary elliptic regime, our result provides a more intrinsic geometric solvability criterion, entirely removing the smallness requirements on both the boundary data and the deviation of $F$ from the isotropic case. We remark that our comparison with \cite{CY} is restricted to this overlapping static setting; their parabolic estimates, translator problem, and long-time analysis are beyond the scope of the present paper.




\medskip

This paper is organized as follows. In Section 2, we recall the definition of the anisotropic perimeter and derive several useful properties of the anisotropic area functional $\mathcal{A}_F$. In Section 3, we establish an anisotropic weak maximum principle and prove Theorem~\ref{theorem1.1} by constructing suitable barrier functions under the assumption of non-negative anisotropic mean curvature. In Section 4, we establish crucial a priori gradient estimates, which are subsequently employed in Section 5 to complete the proof of Theorem~\ref{theorem1.2} and   Theorem~\ref{corollary4.1}. Finally, in Section 6, we employ the direct method to obtain the existence of anisotropic non-parametric minimal surfaces with prescribed boundary data, as stated in Theorem~\ref{theorem1.3}.

 \section{Anisotropic perimeter}
 Throughout this paper, let $F:\mathbb{R}^{n+1}\to[0,+\infty)$ be a nonnegative convex function of class $C^4(\mathbb{R}^{n+1}\setminus\{0\})$, which is even and positively homogeneous of degree 1, so that
\[F(t\xi)=|t|F(\xi)\quad\text{for any }t\in\mathbb{R}, \xi\in\mathbb{R}^{n+1}.\]
Note that there are positive constants $\alpha$ and $\beta$ such that
\begin{equation}\label{e2.1}
\alpha |\xi|\leq F(\xi)\leq \beta |\xi|\quad\text{for any }\xi\in\mathbb{R}^{n+1}.
\end{equation}

\medskip
We can assume without loss of generality that the convex closed set
\[K=\{x\in\mathbb{R}^{n+1}: F(x)\leq 1\}\]
has  measure $|K|$ equal to the measure $w_{n+1}$ of the unit ball in $\mathbb{R}^{n+1}$. Sometimes, we say that $F$ is the  gauge of $K$. The dual metric of $F$ is
\[F^o(x)=\sup_{\xi\in K}\left<\xi,x\right>.\]
It is easy to verify that $F^o(x):\mathbb{R}^{n+1}\to[0,+\infty)$ is also a convex, positively homogeneous function of degree one. In fact, $F^o$ and $F$ are polar to each other in the sense that:
\[F^o(x)=\sup_{\xi\neq0}\frac{\left<x,\xi\right>}{F(\xi)}\qquad F(\xi)=\sup_{x\neq0}\frac{\left<x,\xi\right>}{F^o(x)}.\]
As a consequence, we have the following Cauchy-Schwarz type inequality:
 \[|\langle x,\xi \rangle|\leq F(x)F^o(\xi).\]
It is clear that $F^o$ is the gauge function of the set
\[K^o=\{x\in\mathbb{R}^{n+1}: F^o(x)\leq 1\}.\]
We say that $K^o$ and $K$ are polar to each other, and we denote the measure of $K^o$ by $\kappa_{n+1}$.  For further details, we refer to the literature \cite{AFT,LA,R}.

\medskip
Let $G\subset\mathbb{R}^{n+1}$ be an open set. The  total variation of a function $u\in BV(G)$ with respect to a gauge function $F$ is given by
\[\int_{G}|Du|_{F}dx=\sup\left\{\int_{G}u\mathrm{div}\sigma dx: \sigma\in C_0^1(G,\mathbb{R}^{n+1}),\  F^o(\sigma)\leq 1\right\}.\]
This yields the "generalized" definition of perimeter of a set $E$ with respect to $F$:
\[P_{F}(E,G)=\int_{G}|D\chi_{E}|_{F}dx=\sup\left\{\int_{E}\mathrm{div}\sigma dx: \sigma\in C_0^1(G,\mathbb{R}^{n+1}),\  F^o(\sigma)\leq 1\right\}.\]
The following co-area formula
\[\int_{G}|Du|_{F}dx=\int_{-\infty}^{+\infty}P_{F}(\{u>s\},G)ds,\quad\forall u\in BV(G),\]
and the identity
\[P_{F}(E,G)=\int_{G\cap\partial^* E}F(\nu)d\mathcal{H}^{n}\]
hold, where $\partial^* E$ is the reduced boundary of $E$ and $\nu$ is the outer normal to $E$ (see \cite{AB}).

\medskip
Let $\Omega$ be a bounded open set in $\mathbb{R}^n$. We extend the anisotropic area functional to functions $u\in BV(\Omega)$:
\begin{equation}\label{e2.2}
\mathcal A_F(u,\Omega)=\sup\left\{\int_{\Omega}(g_{n+1}+u \mathrm{div}g') dx: g=(g', g_{n+1})\in C_0^1(\Omega,\mathbb{R}^{n+1}),\ F^o(g)\leq 1.\right\}
\end{equation}
In particular, if $u\in C^1(\Omega)$, this coincides with the classical expression $\int_{\Omega} F(Du,-1) dx$.

\medskip
Therefore, we can prove that $\mathcal{A}_{F}(u,\Omega)$ is lower semi-continuous with respect to weak $L^1$-convergence.
\begin{lemma}\label{lemma2.1}(Semicontinuity)
Let $\Omega\subset\mathbb{R}^n$ be an open set and let $\{u_j\}$ be a sequence of functions in $BV(\Omega)$ which converge  to   $u$ in $L_{loc}^1(\Omega)$. Then
\begin{equation}\label{e2.3}
\mathcal{A}_{F}(u,\Omega)\leq\liminf_{j\to\infty}\mathcal{A}_{F}(u_j,\Omega),
\end{equation}
where $\mathcal{A}_{F}$ is the anisotropic minimal surface functional defined as in \eqref{e1.5}.
\end{lemma}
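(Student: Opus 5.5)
The plan is to use the dual (supremum) representation \eqref{e2.2}: $\mathcal{A}_F(\cdot,\Omega)$ is a supremum of functionals that are continuous under $L^1_{loc}$ convergence, and any such supremum is lower semicontinuous.

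First fix an admissible field $g=(g',g_{n+1})\in C_0^1(\Omega,\mathbb{R}^{n+1})$ with $F^o(g)\le 1$, and set
\[
L_g(v)=\int_\Omega \bigl(g_{n+1}+v\,\mathrm{div}\,g'\bigr)\,dx .
\]
The support of $g$ is a compact set $K_g\subset\Omega$, and $\mathrm{div}\,g'$ is continuous and bounded there. Hence
\[
|L_g(u_j)-L_g(u)|\le \|\mathrm{div}\,g'\|_{L^\infty}\int_{K_g}|u_j-u|\,dx\to 0,
\]
because $u_j\to u$ in $L^1_{loc}(\Omega)$. So each $L_g$ is continuous along the sequence. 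The term $\int g_{n+1}$ does not involve $v$ and causes no difficulty.

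Next, by definition $L_g(u_j)\le \mathcal{A}_F(u_j,\Omega)$ for every $j$. Passing to the limit gives
\[
L_g(u)=\lim_j L_g(u_j)\le \liminf_{j\to\infty}\mathcal{A}_F(u_j,\Omega).
\]
Taking the supremum over all admissible $g$ on the left yields \eqref{e2.3}. If the right-hand side is $+\infty$ there is nothing to prove.

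There is no real obstacle. The one point to check is that \eqref{e2.2} is the definition of $\mathcal{A}_F$ on $BV(\Omega)$; for $u\in C^1$ it agrees with $\int_\Omega F(Du,-1)\,dx$, which the paper states. The argument never uses any bound on $\|u_j\|_{BV}$: the test fields have compact support, so local convergence is enough, and $u$ may even have $\mathcal{A}_F(u,\Omega)=+\infty$ in principle.
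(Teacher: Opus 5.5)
Your proposal is correct and is essentially the paper's own argument: fix an admissible field $g$ with $F^o(g)\le 1$, pass to the limit in $\int_\Omega (g_{n+1}+u_j\,\mathrm{div}\,g')\,dx$ using $L^1_{loc}$ convergence, bound it by $\liminf_j \mathcal{A}_F(u_j,\Omega)$, and take the supremum over $g$. Your explicit estimate via the compact support of $g$ fills in the one step the paper leaves implicit.
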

\begin{proof}
Let $g=(g',g_{n+1})\in C_0^1(\Omega,\mathbb{R}^{n+1})$ be such that $F^o(g)\leq1$, then
\[ \int_{\Omega}(g_{n+1}+u\mathrm{div}g')dx=\lim_{j\to\infty}\int_{\Omega}(g_{n+1}+u_j\mathrm{div}g')dx \leq \liminf_{j\to\infty}\mathcal{A}_{F}(u_j,\Omega).\]
Taking the supremum over all such $g$, \eqref{e2.3} follows.
\end{proof}

\medskip
Now, we recall some properties of the $1$-homogeneous function $F$ (see \cite{WX}). If we restrict $F$ to $\mathbb{S}^{n}$, then from the assumptions on the function $F$, we see that $F$ is positive and convex on $\mathbb{S}^{n}$ and the restriction of
\[F_{\xi\xi}(\xi)=(F_{\xi_i\xi_j}(\xi))_{i,j=1}^{n+1}\]
to $\xi^{\perp}:=\{V\in\mathbb{R}^{n+1}:\left<V,\xi\right>=0\}$ is a positive definite endomorphism $\xi^{\perp}\to \xi^{\perp}$ for all $\xi\in\mathbb{S}^n$, that is, there exists a positive constant $0<\lambda\leq\Lambda$, such that for any $\xi\in\mathbb{S}^n$, $V\in \xi^{\perp}$, we have
\[\lambda|V|^2\leq F_{\xi_i\xi_j}(\xi)V_iV_j\leq\Lambda|V|^2.\]
Moreover, we introduce the following lemma.
\begin{lemma}(\cite{WX}) The following two statements about $F$ are equivalent:
\begin{itemize}
\item[(1)] $\operatorname{Hess}(F^2)$ is positive definite in $\mathbb{R}^{n+1}\setminus\{0\}$;
\item[(2)] The restriction of
\[F_{\xi\xi}(\xi)=(F_{\xi_i\xi_j}(\xi))_{i,j=1}^{n+1}\]
to $\xi^{\perp}$ is a positive definite endomorphism $\xi^{\perp}\to \xi^{\perp}$ for all $\xi\in\mathbb{S}^{n}$.
\end{itemize}
\end{lemma}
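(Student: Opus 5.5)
The plan is to prove the equivalence pointwise on a fixed $\xi\in\mathbb{S}^n$ (equivalently on $\mathbb{R}^{n+1}\setminus\{0\}$, by homogeneity) by writing $\operatorname{Hess}(F^2)$ explicitly in terms of $F$, $DF$ and $F_{\xi\xi}$. Since $F$ is $C^2$ away from the origin,
\[
\tfrac12\operatorname{Hess}(F^2)(\xi)=DF(\xi)\otimes DF(\xi)+F(\xi)F_{\xi\xi}(\xi).
\]
Two consequences of $1$-homogeneity will be used. Euler's relation gives $\langle DF(\xi),\xi\rangle=F(\xi)$. Differentiating $DF(t\xi)=DF(\xi)$ (degree $0$) in $t$ gives $F_{\xi\xi}(\xi)\xi=0$. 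Hence $F_{\xi\xi}(\xi)$ annihilates $\xi$, and since it is symmetric it maps $\xi^\perp$ into $\xi^\perp$. So statement (2) is meaningful, and $F_{\xi\xi}$ is determined by its restriction to $\xi^\perp$.

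To show (1)$\Rightarrow$(2), take $V\in\xi^\perp\setminus\{0\}$ and apply the formula:
\[
0<\tfrac12\langle\operatorname{Hess}(F^2)V,V\rangle=\langle DF,V\rangle^2+F\,\langle F_{\xi\xi}V,V\rangle .
\]
This is not yet enough, because the first term could supply the positivity. I will handle it by using $\xi$-direction freedom: replace $V$ by $W=V+s\xi$, which is allowed since (1) holds for every vector. Because $F_{\xi\xi}\xi=0$, we get $\langle F_{\xi\xi}W,W\rangle=\langle F_{\xi\xi}V,V\rangle$, while
\[
\langle DF,W\rangle=\langle DF,V\rangle+sF(\xi).
\]
Since $F(\xi)>0$ on the sphere (from \eqref{e2.1}), we can choose $s$ so that $\langle DF,W\rangle=0$. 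Note $W\neq0$ because $V\perp\xi$. Then $0<F(\xi)\langle F_{\xi\xi}V,V\rangle$, which gives (2).

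For (2)$\Rightarrow$(1), take an arbitrary $W\neq0$ and decompose $W=V+s\xi$ with $V\in\xi^\perp$. Then
\[
\tfrac12\langle\operatorname{Hess}(F^2)W,W\rangle=\langle DF,W\rangle^2+F\,\langle F_{\xi\xi}V,V\rangle\ge0 .
\]
If $V\neq0$, the second term is strictly positive by (2). If $V=0$, then $s\neq0$ and the first term equals $s^2F(\xi)^2>0$. Either way the form is positive definite.

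I do not expect a real obstacle. The only step needing care is the (1)$\Rightarrow$(2) direction, where the rank-one term $DF\otimes DF$ must be neutralized. The shift $V\mapsto V+s\xi$ does this, using $F_{\xi\xi}\xi=0$ and $F>0$. Finally, the uniform constants $\lambda,\Lambda$ mentioned before the lemma follow from (2) by continuity of $F_{\xi\xi}$ and compactness of $\{(\xi,V):\xi\in\mathbb{S}^n,\ V\in\xi^\perp,\ |V|=1\}$.
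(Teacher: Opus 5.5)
Your proof is correct and complete. The paper gives no argument of its own for this lemma; it simply quotes the result from Wang--Xia and moves on, so your proposal supplies the missing self-contained proof along the standard route.

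That route rests on the identity $\tfrac12\operatorname{Hess}(F^2)=DF\otimes DF+F\,F_{\xi\xi}$, together with $F_{\xi\xi}(\xi)\xi=0$ and $\langle DF(\xi),\xi\rangle=F(\xi)>0$. For (1)$\Rightarrow$(2), your shift $V\mapsto V+s\xi$ with $s=-\langle DF(\xi),V\rangle/F(\xi)$ is exactly what removes the rank-one term. For (2)$\Rightarrow$(1), the split into the cases $V\neq 0$ and $V=0$ is clean.

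The argument uses only $C^2$ regularity away from the origin, positive $1$-homogeneity, and $F>0$ on $\mathbb{S}^n$, which the paper supplies through \eqref{e2.1}. You might state explicitly why working on $\mathbb{S}^n$ suffices: $\operatorname{Hess}(F^2)$ is $0$-homogeneous, $F_{\xi\xi}$ is $(-1)$-homogeneous, and $\xi^\perp=(\xi/|\xi|)^\perp$.

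Your closing compactness remark also correctly justifies the uniform constants $\lambda,\Lambda$, which the paper asserts without proof just before the lemma.
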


\section{Proof of Theorem \ref{theorem1.1}}

Let $\Omega\subset\mathbb{R}^n$ be a bounded domain, we will always suppose that the boundary of $\Omega$, denoted by $\partial\Omega$ is assumed to be at least Lipschitz-continuous. The boundary datum $\varphi$ is supposed to be Lipschitz-continuous, in general $\varphi\in C^2$ is sufficient. In this section, we shall prove Theorem \ref{theorem1.1} by constructing suitable barrier functions under the anisotropic mean curvature condition \eqref{e1.1}, we also remark that the Dirichlet problem \eqref{e1.2} may not have a solution without \eqref{e1.1}.

\medskip
 The work space is $C^{0,1}(\Omega)$, which is of Lipschitz-continuous function in $\Omega$, i.e., continuous functions with finite Lipschitz constant
\[[u]_{\Omega}=\sup\left\{\frac{|u(x)-u(y)|}{|x-y|}; x,y\in\Omega, x\neq y\right\}.\]

\begin{definition}\label{definition3.1}
For $k>0$, we set
\[L_{k}(\Omega)=\{u\in C^{0,1}(\Omega): [u]_{\Omega}\leq k\}.\]
If $\varphi\in C^{0,1}(\partial\Omega)$, we define
\[ L_k(\Omega,\varphi)=\{u\in L_k(\Omega): u=\varphi\text{ on }\partial\Omega\},\quad\text{and}\quad L(\Omega,\varphi)=\{u\in C^{0,1}(\Omega): u=\varphi\text{ on }\partial\Omega\}.\]
\end{definition}

\medskip
We first prove an existence result for the Dirichlet problem of the anisotropic minimal surface equation \eqref{e1.2} in $L_k(\Omega,\varphi)$.
\begin{proposition}\label{proposition3.1}
Let $\varphi$ be a Lipschitz-continuous function on $\partial\Omega$ and assume that $L_k(\Omega,\varphi)$ is non-empty. Then, the anisotropic minimal surface functional $\mathcal{A}_{F}(u,\Omega)$ achieves its unique minimum in $L_k(\Omega,\varphi)$.
\end{proposition}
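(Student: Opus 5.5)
The plan is the direct method of the calculus of variations in the compact class $L_k(\Omega,\varphi)$, followed by a strict convexity argument for uniqueness.

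\emph{Existence.} Set $m=\inf\{\mathcal{A}_F(u,\Omega): u\in L_k(\Omega,\varphi)\}$. By \eqref{e2.1}, $\mathcal{A}_F(u,\Omega)\le \beta\int_\Omega\sqrt{1+|Du|^2}\,dx\le \beta\sqrt{1+k^2}\,|\Omega|$ for every $u$ in the class, so $m$ is finite; it is also nonnegative since $F\ge0$. Take a minimizing sequence $u_j\in L_k(\Omega,\varphi)$. For $x\in\Omega$ and any $y\in\partial\Omega$ we have $|u_j(x)|\le |\varphi(y)|+k\,\mathrm{diam}(\Omega)$, so the sequence is uniformly bounded. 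It is also equi-Lipschitz with constant $k$. Each $u_j$ extends uniquely to $\overline\Omega$ with the same Lipschitz constant, so Arzelà--Ascoli gives a subsequence converging uniformly on $\overline\Omega$ to some $u$. Uniform limits preserve the Lipschitz bound and the boundary values, so $u\in L_k(\Omega,\varphi)$. Uniform convergence implies $L^1_{loc}$ convergence, and Lemma \ref{lemma2.1} then gives $\mathcal{A}_F(u,\Omega)\le\liminf_j\mathcal{A}_F(u_j,\Omega)=m$. Hence $u$ is a minimizer. Since Lipschitz functions are differentiable a.e., $\mathcal{A}_F(u,\Omega)=\int_\Omega F(Du,-1)\,dx$, by approximating $u$ by mollifications (or directly from the sup definition \eqref{e2.2}).

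\emph{Uniqueness.} Suppose $u,v$ are two minimizers. The class $L_k(\Omega,\varphi)$ is convex, so $w=\tfrac12(u+v)$ lies in it. By convexity of $F$,
\[
F\!\left(\tfrac{Du+Dv}{2},-1\right)\le \tfrac12F(Du,-1)+\tfrac12F(Dv,-1)\quad\text{a.e.},
\]
and minimality forces equality a.e. in $\Omega$.

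The key step is to upgrade this equality to $Du=Dv$ a.e. Positive definiteness of $\operatorname{Hess}(F^2)$, through the equivalence lemma of \cite{WX}, says that the Hessian of $F$ is positive definite on $\xi^\perp$. Consequently $F$ is strictly convex along any segment not contained in a ray from the origin. The two points $(Du,-1)$ and $(Dv,-1)$ both have last coordinate $-1$. If they were distinct and collinear with $0$, one would be a nontrivial multiple $t\ne1$ of the other, which is impossible since the last coordinates agree. So the segment joining them does not lie on a line through the origin. Along such a segment $t\mapsto F(\xi+t\eta)$ has strictly positive second derivative, because $\eta$ is not parallel to $\xi+t\eta$ and so has a nonzero component in $(\xi+t\eta)^\perp$. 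This contradicts midpoint equality unless $Du=Dv$.

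Therefore $Du=Dv$ a.e. in $\Omega$. Then $u-v$ is Lipschitz with zero gradient a.e., hence locally constant, and it vanishes on $\partial\Omega$. Since $\overline{U}\cap\partial\Omega\neq\emptyset$ for each connected component $U$ of the bounded set $\Omega$, we get $u=v$.

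I expect this strict convexity step to be the only delicate point. There, $C^2$ regularity of $F$ away from $0$ is used, and the segment never passes through the origin because every point on it has last coordinate $-1$.
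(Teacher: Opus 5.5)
Your proposal is correct and takes the same route as the paper: Arzel\`a--Ascoli in $L_k(\Omega,\varphi)$ plus the lower semicontinuity of Lemma \ref{lemma2.1} for existence, and strict convexity of $p\mapsto F(p,-1)$ for uniqueness. You also supply two justifications the paper leaves out: a proof of that strict convexity (positive definiteness of $\operatorname{Hess}F$ on $\xi^\perp$, together with the fact that $(Du,-1)$ and $(Dv,-1)$ are never collinear with the origin), and a final step done component by component, so you do not need the connectedness of $\Omega$ that the paper invokes.
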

\begin{proof}
Let $\{u_j\}$ be a minimizing sequence in $L_k(\Omega,\varphi)$. By the Arzela-Ascoli theorem, there exists a subsequence still denote by $\{u_j\}$, converging uniformly to a function $u\in L_k(\Omega,\varphi)$. Then, the semicontinuity of $\mathcal{A}_F$ (see Lemma \ref{lemma2.1}) implies that $u$ attains the minimum of $\mathcal{A}_{F}$ in $L_k(\Omega,\varphi)$.

For uniqueness,   the map $p\mapsto F(p,-1)$ is strictly convex. Therefore, for any $t\in(0,1)$,
\[\begin{split}
\mathcal{A}_{F}\left(tu+(1-t)v\right)=&\int_{\Omega}F(t Du+(1-t)Dv,-1)dx\\
\leq&t\int_{\Omega}F(Du,-1)dx+(1-t)\int_{\Omega}F(Dv,-1)dx\\
=&t\mathcal{A}_{F}(u)+(1-t)\mathcal{A}_{F}(v),
\end{split}\]
and the  inequality is strict whenever $Du\neq Dv$  on a set of positive measure.  If $u$ and $v$ are both minimizers, the convexity inequality must be an equality, which implies that $Du = Dv$ almost everywhere. Since $u-v$ has zero trace and $\Omega$ is connected, we deduce that $u = v$.
\end{proof}

\medskip
Now, we can prove the existence of a minimum for $\mathcal{A}_{F}$ in $L(\Omega,\varphi)$.
\begin{proposition}\label{proposition3.2}
Let $u$ be the minimum for $\mathcal{A}_{F}$ in $L_k(\Omega,\varphi)$. If $[u]_{\Omega}<k$, then $u$ minimizes $\mathcal{A}_{F}$ in $L(\Omega,\varphi)$.
\end{proposition}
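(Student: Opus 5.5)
The plan is the standard convexity argument from the classical theory (Giusti's book), transplanted to $\mathcal{A}_F$. Suppose, for contradiction, that some $v\in L(\Omega,\varphi)$ satisfies $\mathcal{A}_F(v,\Omega)<\mathcal{A}_F(u,\Omega)$. For $t\in(0,1]$ consider the convex combination
\[w_t=(1-t)u+tv .\]
Then $w_t=\varphi$ on $\partial\Omega$, so $w_t\in L(\Omega,\varphi)$. Its Lipschitz constant satisfies
\[[w_t]_\Omega\le (1-t)[u]_\Omega+t[v]_\Omega .\]
Since $[u]_\Omega<k$ and $[v]_\Omega<\infty$, for all sufficiently small $t>0$ we have $[w_t]_\Omega\le k$, hence $w_t\in L_k(\Omega,\varphi)$.

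Next we use the convexity of $p\mapsto F(p,-1)$, exactly as in the uniqueness part of Proposition \ref{proposition3.1}. This gives
\[\mathcal{A}_F(w_t,\Omega)\le (1-t)\mathcal{A}_F(u,\Omega)+t\,\mathcal{A}_F(v,\Omega)<\mathcal{A}_F(u,\Omega)\]
for every $t\in(0,1]$. For $t$ small this contradicts the minimality of $u$ in $L_k(\Omega,\varphi)$. Hence $\mathcal{A}_F(u,\Omega)\le\mathcal{A}_F(v,\Omega)$ for all $v\in L(\Omega,\varphi)$.

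Two routine points should be checked along the way.
\begin{itemize}
\item Lipschitz functions are differentiable a.e., so $\mathcal{A}_F(\cdot,\Omega)=\int_\Omega F(D\cdot,-1)\,dx$ is finite on $C^{0,1}$. By \eqref{e2.1} it is bounded by $\beta|\Omega|(1+\max([\,\cdot\,]_\Omega,0))$, so all quantities compared are finite real numbers.
\item The subadditivity of the Lipschitz seminorm under convex combinations follows directly from the definition of $[\,\cdot\,]_\Omega$.
\end{itemize}

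The only genuine subtlety, and the step I consider the crux, is that minimality in the constrained class $L_k$ transfers to the whole class $L(\Omega,\varphi)$. This works only because the strict inequality $[u]_\Omega<k$ leaves room for a small perturbation toward an arbitrary competitor. Convexity of the functional then makes a local (within $L_k$) minimum global, with no need for Euler--Lagrange equations or regularity.
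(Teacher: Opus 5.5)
Your proposal is correct and takes essentially the same approach as the paper. The paper also uses that $u+t(v-u)$ lies in $L_k(\Omega,\varphi)$ for small $t$ because $[u]_\Omega<k$, and then applies convexity of $\mathcal{A}_F$; it just argues directly from $\mathcal{A}_F(u,\Omega)\le(1-t)\mathcal{A}_F(u,\Omega)+t\,\mathcal{A}_F(v,\Omega)$ instead of by contradiction.
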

\begin{proof}
For $t\in(0,1)$ and $v\in L(\Omega,\varphi)$, let
\[v_t=u+t(v-u).\]
Then, $v_t=\varphi$ on $\partial\Omega$ and $[v_t]_{\Omega}\leq k$ for $t$ small enough. Since $u$ minimizes $\mathcal{A}_{F}$ in $L_k(\Omega,\varphi)$, we have
\[\mathcal{A}_{F}(u,\Omega)\leq \mathcal{A}_{F}(v_t,\Omega).\]
Moreover, the convexity of $\mathcal{A}_{F}$ yields that
\[\mathcal{A}_{F}(u,\Omega)\leq \mathcal{A}_{F}(v_t,\Omega)\leq (1-t)\mathcal{A}_{F}(u,\Omega)+t\mathcal{A}_{F}(v,\Omega),\]
and hence, $\mathcal{A}_{F}(u,\Omega)\leq \mathcal{A}_{F}(v,\Omega)$ for any $v\in L(\Omega,\varphi)$.\\
\end{proof}

Clearly, to prove the existence of the minimum in $L(\Omega,\varphi)$, it is sufficient to get estimates for the Lipschitz constant of the minimizer $u$ of $\mathcal{A}_{F}$ in $L_k(\Omega,\varphi)$.

\medskip
The main tool we apply is the anisotropic weak maximum principle. To state it, we introduce the definition of supersolution and subsolution.

\begin{definition}\label{definition3.2}
We say that a function $u\in L_k(\Omega)$ is a supersolution (resp. subsolution) for $\mathcal{A}_{F}$ in $L_k(\Omega)$ if for any $v\in L_k(\Omega,u)$ with $v\geq u$ (resp. $v\leq u$), we have $\mathcal{A}_{F}(v,\Omega)\geq\mathcal{A}_{F}(u,\Omega)$.
\end{definition}
 In particular, a minimizer of the anisotropic area $\mathcal{A}_{F}$ is both a super and a subsolution.
\begin{remark}\label{remark3.1}
 If $ u \in C^2(\Omega) $ and
$\operatorname{div} D_{\xi'} F(Du, -1) = F_{\xi_i \xi_j}(Du, -1) u_{ij} \leq 0,$
then, for every admissible $ v \geq u$ with the same trace, convexity gives
\[\mathcal{A}_F(v, \Omega) - \mathcal{A}_F(u, \Omega) \geq \int_\Omega D_{\xi'} F(Du, -1) \cdot D(v - u)  dx = -\int_\Omega \operatorname{div} D_{\xi'} F(Du, -1)(v - u)  dx \geq 0.\]
Thus $ u $ is a variational supersolution.
 Conversely, if $u$ is a variational supersolution and $[u]_{\overline{\Omega}}<k$, then for any $0\leq\eta \in C_c^\infty(\Omega)$, differentiating $\mathcal{A}_F(u+t\eta,\Omega)$ at $t=0$ yields
 \[\int_\Omega D_{\xi'} F(Du, -1) \cdot D\eta \, dx \geq 0.\]
Hence $\operatorname{div} D_{\xi'} F(Du, -1) \leq 0$ in the weak sense. The corresponding statement holds for subsolutions.
\end{remark}

\begin{lemma}\label{lemma3.1}(Anisotropic weak maximum principle)
Let $\overline u$ and $\underline u$ be a supersolution and a subsolution for $\mathcal{A}_{F}$ in $L_k(\Omega)$, respectively. If $\overline u\geq \underline u$ on $\partial\Omega$, then $\overline u\geq \underline u$ in $\overline{\Omega}$.
\end{lemma}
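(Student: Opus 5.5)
Suppose, for contradiction, that the open set $A=\{x\in\Omega:\underline u(x)>\overline u(x)\}$ is non-empty. I will compare each function with the pointwise max/min of the two, restricted to $A$. Set
\[v=\max(\overline u,\underline u),\qquad w=\min(\overline u,\underline u).\]
Both are Lipschitz with $[v]_\Omega,[w]_\Omega\le k$, since the pointwise max or min of two $k$-Lipschitz functions is $k$-Lipschitz.

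Because $\overline u\ge\underline u$ on $\partial\Omega$, we have $v=\overline u$ and $w=\underline u$ on $\partial\Omega$. Hence $v\in L_k(\Omega,\overline u)$ with $v\ge\overline u$, and $w\in L_k(\Omega,\underline u)$ with $w\le \underline u$. Definition \ref{definition3.2} then gives
\[\mathcal{A}_F(v,\Omega)\ge \mathcal{A}_F(\overline u,\Omega),\qquad \mathcal{A}_F(w,\Omega)\ge\mathcal{A}_F(\underline u,\Omega).\]

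On the other hand, a.e. in $A$ we have $Dv=D\underline u$ and $Dw=D\overline u$, while a.e. in $\Omega\setminus A$ we have $Dv=D\overline u$ and $Dw=D\underline u$. On the set $\{\overline u=\underline u\}$ the gradients agree a.e. anyway. Splitting the integrals therefore gives the exact identity
\[\mathcal{A}_F(v)+\mathcal{A}_F(w)=\mathcal{A}_F(\overline u)+\mathcal{A}_F(\underline u).\]
Combined with the two inequalities, this forces both to be equalities: $\mathcal{A}_F(v)=\mathcal{A}_F(\overline u)$ and $\mathcal{A}_F(w)=\mathcal{A}_F(\underline u)$.

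To reach a contradiction I use strict convexity of $p\mapsto F(p,-1)$, as in Proposition \ref{proposition3.1}. Consider the average $m=\tfrac12(\overline u+v)$. It satisfies $m\ge\overline u$, it is $k$-Lipschitz, and it equals $\overline u$ on $\partial\Omega$, so it is an admissible competitor for $\overline u$. By convexity,
\[\mathcal{A}_F(m)\le\tfrac12\mathcal{A}_F(\overline u)+\tfrac12\mathcal{A}_F(v)=\mathcal{A}_F(\overline u),\]
and the inequality is strict unless $D\overline u=Dv$ a.e. Strictness would contradict the supersolution property, so $D\overline u=Dv$ a.e., i.e. $D\overline u=D\underline u$ a.e. on $A$.

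Now let $A_0$ be a connected component of $A$. On $A_0$ the difference $\underline u-\overline u$ is a positive Lipschitz function with zero gradient a.e., hence a positive constant $c$. On $\partial A_0$ we have $\underline u-\overline u\le 0$: at points of $\partial A_0$ inside $\Omega$, either it is $0$ (continuity at the boundary of the set $A$), and at points on $\partial\Omega$ it is $\le 0$ by hypothesis. Continuity up to $\overline{A_0}$ then gives $c\le 0$, a contradiction. Hence $A=\emptyset$, i.e. $\overline u\ge\underline u$ in $\overline\Omega$.

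The main subtle step is the strict convexity deduction. It needs $F(\cdot,-1)$ strictly convex, which follows from positive definiteness of $F_{\xi\xi}$ on $\xi^\perp$ (the lemma of \cite{WX}) because $(p,-1)$ never lies along a common ray. It also requires the averaged competitor $m$ to stay inside $L_k$, which holds because $L_k$ is convex.
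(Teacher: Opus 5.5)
Your proof is correct and follows the same route as the paper. The max/min competitors force equality of the anisotropic areas, and your averaged competitor $\frac12(\overline u+\max(\overline u,\underline u))$, which is exactly the paper's $\frac12(\overline u+\underline u)$ on $\Sigma$ extended by $\overline u$ outside, then contradicts the supersolution property via strict convexity. The differences are minor: you work on all of $\Omega$ instead of localizing to $\Sigma$, which sidesteps the paper's implicit use of ``$\overline u$ is a supersolution in $L_k(\Sigma)$'', and your connected-component argument supplies the justification the paper omits for why $D\overline u\neq D\underline u$ on a set of positive measure.
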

\begin{proof}
Assume by contradiction that
\[\Sigma:=\{x\in\Omega: \overline u<\underline u\}\]
is not empty, and let $v=\max\{\underline u,\overline u\}$.

 Then we have $v\in L_k(\Omega,\overline u)$ and $v\geq\overline u$, thus $\mathcal{A}_{F}(v,\Omega)\geq\mathcal{A}_{F}(\overline u,\Omega)$. Since $v=\underline u$ in $\Sigma$ and $v=\overline u$ in $\Omega\setminus\Sigma$, equivalently, we have
\[\mathcal{A}_{F}(\underline u,\Sigma)\geq\mathcal{A}_{F}(\overline u,\Sigma).\]
On the other hand, if we take $v=\min\{\overline u,\underline u\}$, by a similar argument, we show that
\[\mathcal{A}_{F}(\overline u,\Sigma)\geq\mathcal{A}_{F}(\underline u,\Sigma).\]
Therefore, we have
\[\mathcal{A}_{F}(\underline u,\Sigma)=\mathcal{A}_{F}(\overline u,\Sigma).\]

Since $\overline u<\underline u$ in $\Sigma$ and $\overline u=\underline u$ on $\partial \Sigma$, it follows that $D\overline u\neq D\underline u$ in a set of positive measure. The strict convexity of $\mathcal{A}_{F}$ yields that
\[\mathcal{A}_{F}\left(\frac{\overline u+\underline u}{2},\Sigma\right)<\frac{1}{2}\mathcal{A}_{F}(\overline u,\Sigma)+\frac{1}{2}\mathcal{A}_{F}(\underline u,\Sigma)=\mathcal{A}_{F}(\overline u,\Sigma).\]
However, this is impossible because $\overline u$ is a supersolution in $L_k(\Sigma)$ and thus
\[\mathcal{A}_{F}\left(\frac{\overline u+\underline u}{2},\Sigma\right)\geq \mathcal{A}_{F}(\overline u,\Sigma).\]
\end{proof}

The following result follows from the anisotropic weak maximum principle immediately.
\begin{corollary}\label{corollary3.1}
Let $\overline u$ and $\underline u$ be a supersolution and a subsolution for $\mathcal{A}_{F}$ in $L_k(\Omega)$, respectively. Then
\[\sup_{x\in\Omega}(\underline u(x)-\overline u(x))\leq\sup_{y\in\partial\Omega}(\underline u(y)-\overline u(y)).\]
\end{corollary}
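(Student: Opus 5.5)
The plan is to shift $\overline u$ by a constant and apply Lemma \ref{lemma3.1}. Set
\[M:=\sup_{y\in\partial\Omega}(\underline u(y)-\overline u(y)).\]
If $M=+\infty$ there is nothing to prove. Both functions are Lipschitz on the bounded set $\Omega$ and extend continuously to $\overline{\Omega}$, so in fact $M$ is finite, and we define
\[\tilde u:=\overline u+M.\]
By construction $\tilde u\geq\underline u$ on $\partial\Omega$.

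The key step is to check that $\tilde u$ is again a supersolution for $\mathcal{A}_{F}$ in $L_k(\Omega)$ in the sense of Definition \ref{definition3.2}. Adding a constant leaves the Lipschitz constant unchanged, so $\tilde u\in L_k(\Omega)$. Now take any $v\in L_k(\Omega,\tilde u)$ with $v\geq\tilde u$. Then $v-M$ lies in $L_k(\Omega,\overline u)$ and satisfies $v-M\geq\overline u$. Since $\overline u$ is a supersolution,
\[\mathcal{A}_{F}(v-M,\Omega)\geq\mathcal{A}_{F}(\overline u,\Omega).\]
The functional $\mathcal{A}_{F}$ is invariant under adding constants. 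This is immediate from $\int_\Omega F(Dw,-1)\,dx$, and also from \eqref{e2.2}, because $\int_\Omega \operatorname{div} g'\,dx=0$ for compactly supported $g'$. Hence $\mathcal{A}_{F}(v,\Omega)\geq\mathcal{A}_{F}(\tilde u,\Omega)$, so $\tilde u$ is a supersolution.

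Lemma \ref{lemma3.1} applied to $\tilde u$ and $\underline u$ then gives $\underline u\leq \overline u+M$ in $\overline\Omega$. Taking the supremum over $x\in\Omega$ yields the claim.

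No step is a genuine obstacle. The only points needing care are the translation invariance of $\mathcal{A}_{F}$, including its $BV$ form \eqref{e2.2}, and the finiteness of $M$, which follows from the Lipschitz bounds on the bounded domain.
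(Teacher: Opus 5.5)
Your proposal is correct and follows the same approach as the paper: shift $\overline u$ by the constant $\sup_{\partial\Omega}(\underline u-\overline u)$, observe that the shifted function is still a supersolution, and invoke Lemma \ref{lemma3.1}. You also supply the details the paper leaves as ``easy to verify'': the invariance of $\mathcal{A}_F$ under adding constants, including in the form \eqref{e2.2}, and the finiteness of the boundary supremum.
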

\begin{proof}
It is easy to verify that $\overline u+\alpha$ is also a supersolution for any $\alpha\in\mathbb{R}$, and
\[\underline u(x)\leq \overline{u}(x)+\sup_{y\in\partial\Omega}(\underline{u}(y)-\overline{u}(y)).\]
Then, the result follows from Lemma \ref{lemma3.1} by taking $\alpha=\sup_{y\in\partial\Omega}(\underline{u}(y)-\overline{u}(y))$.
\end{proof}

\medskip
In particular, if $u$ and $v$ minimize the anisotropic area $\mathcal{A}_{F}$ in $L_k(\Omega)$, then corollary \ref{corollary3.1} holds for both $u-v$ and $v-u$, and therefore
\begin{equation}\label{e3.1}
\sup_{\Omega}|u-v|\leq \sup_{\partial\Omega}|u-v|.
\end{equation}

\begin{lemma}\label{lemma3.2}(Reduction to boundary estimates)
Let $u$ minimize the anisotropic area in $L_k(\Omega)$. Then
\begin{equation}\label{e3.2}
[u]_{\Omega}\leq\sup\left\{\frac{|u(x)-u(y)|}{|x-y|}: x\in\Omega, y\in\partial\Omega\right\}.
\end{equation}
\end{lemma}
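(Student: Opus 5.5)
The plan is to use the classical translation argument. The anisotropic area functional is invariant under translations of the independent variable, so a translate of a minimizer is again a minimizer, and the comparison estimate \eqref{e3.1} controls the difference of the two.

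\textbf{Setup.} Fix $\tau\in\mathbb{R}^n$ and set $\Omega_\tau=\{x\in\Omega: x+\tau\in\Omega\}$, the open set of points which remain in $\Omega$ after translation by $\tau$. On $\Omega_\tau$ define $u_\tau(x)=u(x+\tau)$. Since $\mathcal{A}_F(u_\tau,\Omega_\tau)=\mathcal{A}_F(u,\Omega_\tau+\tau)$ and $[u_\tau]_{\Omega_\tau}\le [u]_\Omega\le k$, both $u$ and $u_\tau$ minimize $\mathcal{A}_F$ in $L_k(\Omega_\tau)$ with respect to their own boundary values.

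\textbf{Restriction step.} A minimizer in $L_k(\Omega)$ restricts to a minimizer in $L_k(\Omega')$ for any open $\Omega'\subset\Omega$. To see this, take a competitor $v$ on $\Omega'$ with the same boundary values and Lipschitz constant at most $k$. Extend $v$ by $u$ outside $\Omega'$. The glued function is $k$-Lipschitz: for $x\in\Omega'$ and $y\notin\Omega'$, the segment from $x$ to $y$ meets $\partial\Omega'$ (we may take segments inside $\Omega$ locally, or argue on $\overline{\Omega}$ via the equality on $\partial\Omega'$). This gluing is the one point needing care. If segments leave $\Omega$, I will instead phrase minimality locally. Alternatively, I would note that Definition~\ref{definition3.2} and Lemma~\ref{lemma3.1} only ever use competitors agreeing with $u$ outside a subregion, so Corollary~\ref{corollary3.1} applies directly on $\Omega_\tau$ with $\overline u=u$ and $\underline u=u_\tau$, and vice versa.

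\textbf{Applying \eqref{e3.1}.} On $\Omega_\tau$ this gives
\[\sup_{x\in\Omega_\tau}|u(x+\tau)-u(x)|\le \sup_{x\in\partial\Omega_\tau}|u(x+\tau)-u(x)|.\]
Now $\partial\Omega_\tau\subset \partial\Omega\cup(\partial\Omega-\tau)$. So for $x\in\partial\Omega_\tau$, either $x\in\partial\Omega$ or $x+\tau\in\partial\Omega$; in both cases one of the two points lies on $\partial\Omega$. Writing $M$ for the right-hand side of \eqref{e3.2} and using continuity of $u$ up to the boundary, we get
\[|u(x+\tau)-u(x)|\le M|\tau|\]
on $\partial\Omega_\tau$, hence on $\Omega_\tau$. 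Since every pair $x,y\in\Omega$ is of the form $x, x+\tau$ with $x\in\Omega_\tau$ and $\tau=y-x$, it follows that $[u]_\Omega\le M$.

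\textbf{Main obstacle.} The delicate point is justifying that $u$ and $u_\tau$ are sub- and supersolutions in $L_k(\Omega_\tau)$, that is, the locality of minimality under the Lipschitz constraint. Here the competitor $\max\{u,u_\tau\}$ used in Lemma~\ref{lemma3.1} is automatically $k$-Lipschitz as a maximum of $k$-Lipschitz functions, so no gluing problem arises for it. I would verify the proof of Lemma~\ref{lemma3.1} using only such max/min and average competitors, each of which stays in $L_k$ and agrees with $u$ near $\partial\Omega_\tau$, extended to all of $\Omega$ by $u$.
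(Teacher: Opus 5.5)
Your proposal is correct and follows the paper's argument: you translate by $\tau=x_2-x_1$, compare $u$ and $u_\tau$ on $\Omega\cap(\Omega-\tau)$ via \eqref{e3.1}, and use that every boundary point of this set has either itself or its translate on $\partial\Omega$. The locality step you flag, which the paper simply asserts, is unproblematic even for nonconvex $\Omega$: because $[\cdot]_\Omega$ is the Euclidean Lipschitz seminorm, for $x\in\Omega'$ and $y\in\Omega\setminus\Omega'$ the first point $z$ where the segment $[x,y]$ meets $\partial\Omega'$ lies in $\overline{\Omega'}\subset\overline{\Omega}$ with $v(z)=u(z)$, so $|v(x)-u(y)|\le k|x-z|+k|z-y|=k|x-y|$ whether or not the segment stays inside $\Omega$.
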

\begin{proof}
For any $x_1,x_2\in\Omega$ and $x_1\neq x_2$, let $t=x_2-x_1$. Then, the function
\[u_t(x)=u(x+t)\]
minimizes the anisotropic area $\mathcal{A}_{F}$ in $L_k(\Omega_t)$, where
\[\Omega_t=\{x\in\mathbb{R}^n: x+t\in\Omega\}.\]
Clearly, $x_1\in\Omega\cap\Omega_t$ and thus the set $\Omega\cap\Omega_t$ is non-empty. Since $u$ and $u_t$ minimize $\mathcal{A}_{F}$ in $L_k(\Omega\cap\Omega_t)$, it follows from \eqref{e3.1} that there exists some $y\in\partial(\Omega\cap \Omega_t)$ such that
\[|u(x_1)-u(x_2)|=|u(x_1)-u_t(x_1)|\leq|u(y)-u_t(y)|=|u(y)-u(y+t)|.\]
Notice that at least one of  $y,y+t$ belongs to $\partial\Omega$, therefore we have
\[\frac{|u(x_1)-u(x_2)|}{|x_1-x_2|}\leq\sup\left\{\frac{|u(x)-u(y)|}{|x-y|}: x\in\Omega, y\in\partial\Omega\right\},\]
and \eqref{e3.2} holds immediately.
\end{proof}

Combining  proposition \ref{proposition3.2} and lemma \ref{lemma3.2}, we conclude that to prove the existence of a minimum for the anisotropic area $\mathcal{A}_{F}$ in $L_k(\Omega)$, we only need to estimate $|u(x)-u(y)|$ for $x\in\Omega$ and $y\in\partial\Omega$.  For any $x\in\Omega$ and $t>0$, we set
\[\Sigma_t=\{x\in\Omega:d(x)<t\}\quad\text{and}\quad \Gamma_t=\{x\in\Omega:d(x)=t\},\]
where $d(x)$ denotes the distance from $x$ to $\partial\Omega$.

\medskip
Now, we introduce the definition of barrier functions, which play  an important role in the proof of Theorem \ref{theorem1.1}.
\begin{definition}\label{definition3.3}
Let $\varphi$ be a Lipschitz-continuous function on $\partial\Omega$. We say that a Lipschitz-continuous function $v$ defined on $\overline{\Sigma}_{t_0}$ for some $t_0>0$ is an upper barrier, if it satisfies
\begin{equation}\label{e3.3}
v=\varphi\text{ on }\partial\Omega;\ v\geq\sup_{\partial\Omega}\varphi\text{ on }\Gamma_{t_0};
\end{equation}
\begin{equation}\label{e3.4}
 v \text{ is a supersolution in }\Sigma_{t_0}.
\end{equation}
Similarly, we say that a Lipschitz-continuous function $v$ defined in some $\Sigma_{t_0}$ is a lower barrier, if it satisfies
\begin{equation}\label{e3.5}
v=\varphi\text{ on }\partial\Omega;\quad\ v\leq\inf_{\partial\Omega}\varphi\text{ on }\Gamma_{t_0};
\end{equation}
\begin{equation}\label{e3.6}
v \text{ is a subsolution in }L_k(\Sigma_{t_0}).
\end{equation}
\end{definition}

\medskip
Now, we are in a position to prove Theorem \ref{theorem1.1}.
\begin{proof}[Proof of Theorem \ref{theorem1.1}]
We first claim that the anisotropic area $\mathcal{A}_{F}$ attains its minimum in $L(\Omega,\varphi)$ if there exist upper and lower barriers $v_{\pm}$ relative to $\varphi$.

Let $L=[v_{\pm}]_{\Sigma_{t_0}}$ and let $Q<k$. Assume that $u$ give the minimum for $\mathcal{A}_{F}$ in $L_k(\Omega)$, then $u$ minimizes $\mathcal{A}_{F}$ in $L_k(\Sigma_{t_0})$. From the anisotropic weak maximum principle (see Lemma \ref{lemma3.1}), we obtain that for any $x\in\Omega$,
\begin{equation}\label{e3.7}\inf_{\partial\Omega}\varphi\leq u(x)\leq\sup_{\partial\Omega}\varphi.\end{equation}
In particular, $v_-(x)\leq u(x)\leq v_+(x)$ on $\Gamma_{t_0}$. Applying  Lemma \ref{lemma3.1} again, we have
\[v_-(x)\leq u(x)\leq v_+(x)\quad\text{ in }\Sigma_{t_0}.\]
Since $u=v_{\pm}$ on $\partial\Omega$, we get
\begin{equation}\label{e3.8}
|u(x)-u(y)|\leq L|x-y|\quad\text{ for any }x\in\Sigma_{t_0}, y\in\partial\Omega.
\end{equation}
On the other hand, for $x\in\Omega\setminus\Sigma_{t_0}$, it follows from \eqref{e3.7} that
\[|u(x)-u(y)|\leq \max\left\{\sup_{\partial\Omega}\varphi-u(y),\ u(y)-\inf_{\partial\Omega}\varphi\right\}\leq Lt_0\leq L|x-y|,\]
and therefore \eqref{e3.8} holds for any $x\in\Omega$. Applying Lemma \ref{lemma3.2}, we have $[u]_{\Omega}\leq L<k$, and the claim follows from proposition \ref{proposition3.2} immediately.

Now, it remains to establish the existence of the barrier functions. We shall discuss only the case of upper barriers and consider
\[v(x)=\varphi(x)+\psi(d(x)),\]
where $\varphi\in C^2(\mathbb{R}^n)$ and $\psi$ is a $C^2$-function on $[0,d_0]$ satisfying
\[\psi(0)=0, \psi'(t)\geq 1, \psi''(t)<0\]
and
\[\psi(d_0)\geq M=2\sup_{\Omega}|\varphi|,\]
where $d_0$ will be determined later.

In this way, condition \eqref{e3.3} is satisfied in $\Gamma_{d_0}$, we only need to show that the $C^2$ function $v$ is a supersolution in $\Sigma_{d_0}$. From the homogeneity of $F$, we deduce that
\begin{equation}\label{e3.9}\begin{split}
F_{\xi_i\xi_j}(Dv,-1)v_{ij}=&F_{\xi_i\xi_j}(Dv,-1)(\varphi_{ij}+\psi''d_id_j+\psi'd_{ij})\\
=&\frac{1}{\sqrt{1+|Dv|^2}}[\psi''F_{\xi_i\xi_j}(\nu)d_id_j+\psi'F_{\xi_i\xi_j}(\nu)d_{ij}+F_{\xi_i\xi_j}(\nu)\varphi_{ij}],
\end{split}\end{equation}
where $\nu=\frac{(Dv,-1)}{\sqrt{1+|Dv|^2}}$. Since $(\operatorname{Hess} F)(\nu)$ is semi-positive definite and $\varphi\in C^2(\Omega)$, we deduce that there exists some constant $c_1(n)>0$ such that
\begin{equation}\label{e3.10}
|F_{\xi_i\xi_j}(\nu)\varphi_{ij}|\leq c_1(n) F_{\xi_i\xi_i}(\nu).
\end{equation}
Moreover, if we set $D\hat{d}=(Dd,0)$, it follows from the property of the function $F$ that
\[F_{\xi_i\xi_j}(\nu)d_id_j=F_{\xi_i\xi_j}(\nu)\hat{d}_i\hat{d}_j>0.\]
Therefore, there exists $c_2(n)$ small such that
\begin{equation}\label{e3.11}
c_2(n)F_{\xi_i\xi_i}(\nu)\leq (1+|Dv|^2)F_{\xi_i\xi_j}(\nu)d_id_j
\end{equation}
for $|Dv|$ large enough.

Now, we need to estimate the term involving $d_{ij}$ in our barrier construction.
Since $Dv = \psi' Dd + D\phi$ and $|Dd| = 1$, a direct asymptotic expansion of the unit normal vector $\nu = \frac{(Dv, -1)}{\sqrt{1 + |Dv|^2}}$ as $\psi' \to \infty$ yields the asymptotic bound
\[
|\nu - (Dd(x), 0)| \le C (\psi')^{-1}.
\]

For $x \in \Sigma_{d_0}$, let $y \in \partial\Omega$ be the unique projection of $x$ onto $\partial\Omega$, which implies $Dd(x) = Dd(y)$. Since $\partial\Omega \in C^2$, the principal curvatures of the parallel surfaces satisfy the classical relation $\kappa_i(x) = \frac{\kappa_i(y)}{1 - d(x)\kappa_i(y)}$ for $1 \le i \le n-1$. For sufficiently small $d_0 > 0$, this immediately yields the Hessian estimate:
\[|D^2 d(x) - D^2 d(y)| \le \max_{1 \le i \le n-1} \left| \frac{d(x)\kappa_i(y)^2}{1 - d(x)\kappa_i(y)} \right| \le C d(x),\]
where $C$ depends only on $d_0$ and the $C^2$-norm of $\partial\Omega$.

Using the boundary curvature condition \eqref{e1.1} at $y$, the uniform Lipschitz continuity of $D^2 F$  and the explicit estimates above, we obtain
\begin{equation}\label{e3.12}
\begin{aligned}
F_{\xi_i \xi_j}(\nu) d_{ij}(x)
&= F_{\xi_i \xi_j}(Dd(y),0) d_{ij}(y) + F_{\xi_i \xi_j}(Dd(y),0) \big[ d_{ij}(x) - d_{ij}(y) \big] \\
&\quad + \Big[ F_{\xi_i \xi_j}(\nu) - F_{\xi_i \xi_j}(Dd(x),0) \Big] d_{ij}(x) \\
&\leq 0 + C d(x) + C \left| \nu - (Dd(x),0) \right| \cdot |D^2 d(x)| \\
&\leq C \big( d(x) + (\psi')^{-1} \big).
\end{aligned}
\end{equation}
Therefore,
\[\psi' F_{\xi_i \xi_j}(\nu) d_{ij}(x)\leq C(\varepsilon+1)\leq  c_3(1+\varepsilon) F_{\xi_i\xi_i}(\nu). \]


In conclusion, if we take $\psi(d)=\varepsilon\log(1+Kd)$ for $K\geq d_0^{-1}e^{R/\varepsilon}$ with $R$ large, then from \eqref{e3.10}-\eqref{e3.12}, we obtain
\[\begin{split}
&\psi'' F_{\xi_i\xi_j}(\nu)d_id_j+\psi'F_{\xi_i\xi_j}(\nu)d_{ij}+F_{\xi_i\xi_j}(\nu)\phi_{ij}\\
\leq&\psi'' F_{\xi_i\xi_j}(\nu)d_id_j + \big[ c_1 + c_3(1+\varepsilon) \big] c_2^{-1}\left(1+|D\phi+\psi' Dd|^2\right)F_{\xi_i\xi_j}(\nu)d_id_j \\
=&\psi'' F_{\xi_i\xi_j}(\nu)d_id_j\left[1-\varepsilon \big[ c_1 + c_3(1+\varepsilon) \big]c_2^{-1}(\psi')^{-2}(1+|D\phi+\psi' Dd|^2)\right]\\
\leq&\psi'' F_{\xi_i\xi_j}(\nu)d_id_j \left( 1-4\varepsilon \big[ c_1 + c_3(1+\varepsilon) \big] c_2^{-1} \right).
\end{split}\]
By choosing $\varepsilon > 0$ sufficiently small such that $4\varepsilon [c_1 + c_3(1+\varepsilon)] < c_2$, we obtain $F_{\xi_i\xi_j}(Dv,-1)v_{ij} < 0$. Thus, $v$ is a valid upper barrier. An analogous argument establishes the existence of the lower barrier, which completes the proof of Theorem \ref{theorem1.1}.



\end{proof}

As an end of this section, we will show that the condition that the anisotropic mean curvature of $\partial\Omega$ is nowhere negative  is necessary for general solvability of the Dirichlet problem \eqref{e1.2}.

We suppose that $\Omega$ is a connected open set, whose boundary is the union of four disjoint sets:
\[\partial\Omega=\Gamma_+\cup\Gamma_-\cup\Gamma_0\cup N,\]
where $\Gamma_+,\Gamma_-,\Gamma_0$ are  relatively open in $\partial\Omega$, (i.e. there exist disjoint open sets $A_+, A_-$ and $A_0$ such that $\Gamma_+=\partial\Omega\cap A_+$, etc.) and $\mathcal{H}^{n-1}(N)=0$.

\medskip
For $t>0$, we define
\[\Omega_t=\{x\in\Omega:\mathrm{dist}(x,\partial\Omega)>t\}.\]

\begin{lemma}\label{lemma3.3}
Let $\Omega$ be as above, and let $u$ and $v$ be two functions of class $C^2(\Omega)\cap C^0(\Omega\cup\Gamma_0)$ such that
\begin{itemize}
\item[($1$)] $\mathrm{div}(D_{\xi'}F(Dv,-1))\leq \mathrm{div}(D_{\xi'}F(Du,-1))$\quad\text{in }$\Omega$;
\item[($2$)] $u\leq v$ on $\Gamma_0$;
 \item[($3$)] for any open set $V\supset (\Gamma_0\cup\Gamma_-)$,
 \[\lim_{t\to 0^+}\int_{\partial\Omega_t\setminus V}\left(F(\nu,0)-\left<\nu,D_{\xi'}F(Dv,-1)\right>\right)d\mathcal{H}^{n-1}=0;\]
 \item[($4$)]  for any open set $U\supset (\Gamma_0\cup\Gamma_+)$,
 \[\lim_{t\to 0^+}\int_{\partial\Omega_t\setminus U}\left(F(\nu,0)+\left<\nu,D_{\xi'}F(Du,-1)\right>\right)d\mathcal{H}^{n-1}=0.\]
\end{itemize}
Then,
\begin{itemize}
\item[($i$)]if $\Gamma_0\neq\emptyset$, then $u\leq v$ in $\Omega$;
\item[($ii$)]if $\Gamma_0=\emptyset$, then $u=v+const$.
\end{itemize}
\end{lemma}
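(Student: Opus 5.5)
This is the anisotropic version of the classical Jenkins--Serrin / Finn comparison lemma (cf. Giusti's book), and I will follow that scheme. Write $T(p)=D_{\xi'}F(p,-1)\in\mathbb{R}^n$. The key structural facts are the following.

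\emph{Pointwise bound.} For every $p$ and every unit $\nu\in\mathbb{R}^n$,
\[|\langle \nu,T(p)\rangle|\le F(\nu,0).\]
This follows from convexity and $1$-homogeneity: $\langle DF(\xi),\eta\rangle\le F(\eta)$. Apply it with $\eta=(\pm\nu,0)$, using $\langle DF(p,-1),(\nu,0)\rangle=\langle T(p),\nu\rangle$ and evenness of $F$.

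\emph{Strict monotonicity.} For $p\ne q$,
\[\langle T(p)-T(q),p-q\rangle>0,\]
by strict convexity of $p\mapsto F(p,-1)$, which comes from positivity of $F_{\xi\xi}$ on $\xi^\perp$: the vector $(p-q,0)$ is never parallel to $(p,-1)$.

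Set $w=u-v$ and, for $N>0$ and a constant $k$, take the truncation $\eta=\min\{\max\{w-k,0\},N\}$, with $0\le\eta\le N$. Multiply $\operatorname{div}(T(Du)-T(Dv))\ge 0$ by $\eta$ and integrate over $\Omega_t$:
\[\int_{\Omega_t}\langle T(Du)-T(Dv),D\eta\rangle\,dx\le\int_{\partial\Omega_t}\eta\,\langle T(Du)-T(Dv),\nu\rangle\,d\mathcal{H}^{n-1}.\]
The left side is $\int_{\Omega_t\cap\{k<w<k+N\}}\langle T(Du)-T(Dv),Du-Dv\rangle\ge0$.

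In case (i), take $k=0$. Since $u\le v$ on $\Gamma_0$ and both functions are continuous there, $\eta$ vanishes near $\Gamma_0$. More precisely, for any $\epsilon>0$, if we use $k=\epsilon$ instead, there is a neighbourhood $W$ of $\Gamma_0$ with $\eta=0$ on $\partial\Omega_t\cap W$ for small $t$.

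Split $\partial\Omega_t\setminus W$ into the parts near $\Gamma_+$ and near $\Gamma_-$, using neighbourhoods $A_\pm$, plus a remainder near $N$. On the part near $\Gamma_+$, write the integrand as
\[\langle T(Du)-T(Dv),\nu\rangle=\bigl(F(\nu,0)+\langle \nu,T(Du)\rangle\bigr)-\bigl(F(\nu,0)+\langle\nu,T(Dv)\rangle\bigr).\]
The second bracket is $\ge0$ by the pointwise bound. The first tends to $0$ in $L^1$ on the set away from $U$ by hypothesis (4), but here we are near $\Gamma_+$. So the split should be reversed: near $\Gamma_+$, i.e. outside $V\supset\Gamma_0\cup\Gamma_-$, use (3) for $v$ and bound the $u$-term by $\le0$. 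Near $\Gamma_-$, i.e. outside $U\supset\Gamma_0\cup\Gamma_+$, use (4) for $u$ and bound the $v$-term by $\le0$. Each good term is multiplied by $\eta\le N$, so its contribution tends to $0$.

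The set near $N$ is the main obstacle. Since $\mathcal{H}^{n-1}(N)=0$, cover $N$ by a neighbourhood $Z$ with $\mathcal{H}^{n-1}(\partial\Omega_t\cap Z)$ small uniformly in small $t$. This uses the regularity of $\partial\Omega$, e.g. the Lipschitz assumption, to compare $\partial\Omega_t$ with $\partial\Omega$. The pointwise bound then gives
\[\Bigl|\int_{\partial\Omega_t\cap Z}\eta\,\langle T(Du)-T(Dv),\nu\rangle\,d\mathcal{H}^{n-1}\Bigr|\le 2\beta N\,\mathcal{H}^{n-1}(\partial\Omega_t\cap Z),\]
which is small. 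Choosing the neighbourhoods $U,V$ compatibly with $A_\pm$ and $Z$ then gives
\[\int_{\Omega\cap\{\epsilon<w<\epsilon+N\}}\langle T(Du)-T(Dv),Du-Dv\rangle\,dx\le0\]
after letting $t\to0$ by monotone convergence. Strict monotonicity forces $Du=Dv$ on $\{w>\epsilon\}$. Hence $w$ is locally constant on the open set $\{w>\epsilon\}$.

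It remains to rule out $\{w>\epsilon\}\neq\emptyset$ when $\Gamma_0\ne\emptyset$. Let $G$ be a component of $\{w>\epsilon\}$, on which $w\equiv c>\epsilon$. By connectedness of $\Omega$, $\partial G\cap\Omega\ne\emptyset$ unless $G=\Omega$. At points of $\partial G\cap\Omega$ continuity gives $w=\epsilon$, a contradiction. If instead $G=\Omega$, then $w\equiv c$ on $\Omega$, and continuity up to $\Gamma_0$ gives $c\le0$, again a contradiction. Letting $\epsilon\to0$ yields $u\le v$.

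In case (ii) there is no $\Gamma_0$, so the same computation works with any $k$. Taking $N$ large and $k\to-\infty$ is a problem, since $\eta\le N$ must stay bounded, so instead apply the argument with the truncation between arbitrary levels $k<k+N$. This gives $Du=Dv$ on every set $\{k<w<k+N\}$, hence $Du=Dv$ a.e. on $\Omega$. Since $\Omega$ is connected, $u=v+\mathrm{const}$.
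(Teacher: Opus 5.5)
Your proof is correct and follows the paper's scheme: a truncated test function in $u-v$, the divergence theorem on $\Omega_t$, the bound $|\langle \nu, D_{\xi'}F(p,-1)\rangle|\le F(\nu,0)$ to split $\partial\Omega_t$ according to (3) and (4), and strict monotonicity of $p\mapsto D_{\xi'}F(p,-1)$. Your level $k=\epsilon$ replaces the paper's passage from strict $u<v$ on $\Gamma_0$ to $v+\varepsilon$. Your arbitrary levels in (ii) do the work of the paper's shift by $\alpha=v(x_0)-u(x_0)$ with $x_0$ arbitrary.

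The one real difference is how you decompose $\partial\Omega_t$ near $N$. You call this the main obstacle and control it through $\mathcal{H}^{n-1}(N)=0$, compactness of $N$, and a uniform bound on $\mathcal{H}^{n-1}(\partial\Omega_t\cap Z)$ coming from boundary regularity. As the lemma is stated, this is not needed. Hypotheses (3) and (4) hold for \emph{every} open $V\supset\Gamma_0\cup\Gamma_-$ and $U\supset\Gamma_0\cup\Gamma_+$, so you may take $V=W\cup A_-$ and $U=W\cup A_+$. Since $A_+\cap A_-=\emptyset$, you get $V\cap U=W$. Then $\partial\Omega_t\setminus W$ is the disjoint union of two pieces:
\begin{itemize}
\item $\partial\Omega_t\setminus V$, controlled by (3);
\item $(\partial\Omega_t\cap A_-)\setminus W\subset\partial\Omega_t\setminus U$, controlled by (4).
\end{itemize}
Every point of $\partial\Omega_t\setminus W$, including every point near $N$, lies in one of these two pieces. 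This is what the paper intends with the sets $\partial\Omega_t\setminus(A\cup A_\mp)$; it writes them with overlapping domains, and the disjoint split above is the correct form. This route uses neither $\mathcal{H}^{n-1}(N)=0$ nor any area estimate for $\partial\Omega_t$.

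What your detour buys is a slightly stronger lemma. It still works if (3) and (4) are assumed only for $V,U$ that also contain $N$, at the cost of the regularity and area estimate near $N$. In effect, it moves into the lemma the work that otherwise goes into checking (3)--(4) near $N$ in applications.

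Finally, rename your truncation height, since $N$ already denotes the exceptional part of $\partial\Omega$ and the two uses collide in your $N$-paragraph.
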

\begin{proof}
For any non-negative function $\phi$, we have
\begin{equation}\label{e3.13}
\begin{split}
&\int_{\Omega_t}\left<D_{\xi'}F(Dv,-1)-D_{\xi'}F(Du,-1),D\phi\right>dx\\
=&-\int_{\Omega_t}\phi[\mathrm{div}(D_{\xi'}F(Dv,-1))-\mathrm{div}(D_{\xi'}F(Du,-1))]dx\\
&+\int_{\partial\Omega_t}\phi\left<D_{\xi'}F(Dv,-1)-D_{\xi'}F(Du,-1),\nu\right>d\mathcal{H}^{n-1}\\
\geq&\int_{\partial\Omega_t}\phi\left<D_{\xi'}F(Dv,-1)-D_{\xi'}F(Du,-1),\nu\right>d\mathcal{H}^{n-1}.
\end{split}
\end{equation}
In particular, if we take $\phi=\phi_k=\max\{0,\min\{u-v,k\}\}$, then we have
\[\left<D_{\xi'}F(Dv,-1)-D_{\xi'}F(Du,-1),D\phi_k\right>\]
\[=\begin{cases}
\left<D_{\xi'}F(Dv,-1)-D_{\xi'}F(Du,-1),D(u-v)\right>,&\text{if }0<u-v<k;\\
0&\text{otherwise}.
\end{cases}\]
Therefore, $\left<D_{\xi'}F(Dv,-1)-D_{\xi'}F(Du,-1),D\phi_k\right>\leq 0$ in $\Omega$, and by \eqref{e3.13}, we have
\begin{equation}\label{e3.14}\begin{split}
0\geq&\int_{\Omega_t}\left<D_{\xi'}F(Dv,-1)-D_{\xi'}F(Du,-1),D\phi_k\right>dx\\
\geq&\int_{\partial\Omega_t}\phi_k\left<D_{\xi'}F(Dv,-1)-D_{\xi'}F(Du,-1),\nu\right>d\mathcal{H}^{n-1}.
\end{split}\end{equation}

In the case of $\Gamma_0\neq\emptyset$ and $u<v$ on $\Gamma_0$, let
\[A=\{x\in\Omega:u(x)<v(x)\}.\]
Then, we have $\phi_k=0$ in $A$ and the negativity of the integral in \eqref{e3.14} implies that
\begin{equation}\label{e3.15}\begin{split}
&\int_{\partial\Omega_t}\phi_k\left<D_{\xi'}F(Dv,-1)-D_{\xi'}F(Du,-1),\nu\right>d\mathcal{H}^{n-1}\\
\geq
&\int_{\partial\Omega_t\setminus(A\cup A_-)}\phi_k\left[\left<D_{\xi'}F(Dv,-1),\nu\right>-\left<D_{\xi'}F(Du,-1),\nu\right>\right]d\mathcal{H}^{n-1}\\
&+\int_{\partial\Omega_t\setminus(A\cup A_+)}\phi_k\left[-\left<D_{\xi'}F(Du,-1),\nu\right>+\left<D_{\xi'}F(Dv,-1),\nu\right>\right]d\mathcal{H}^{n-1},
\end{split}
\end{equation}
where $A_+$ and $A_-$ are open sets s.t. $\Gamma_{\pm}=\partial\Omega\cap A_{\pm}$. To estimate \eqref{e3.15}, we recall some properties for $F$. Since $F$ is strict convexity in $\mathbb{R}^{n+1}$, for any vector $X,Y\in\mathbb{R}^{n+1}$, it holds:
\[F(X)\geq F(Y)+\left<DF(Y),X-Y\right>=\left<DF(Y),X\right>.\]
If we take $\widetilde{\nu}=(\nu,0)\in\mathbb{R}^{n+1}$, then
\[\left<D_{\xi'}F(Du,-1),\nu\right>=\left<DF(Du,-1),\widetilde{\nu}\right>\leq F(\widetilde{\nu})=F(\nu,0).\]
Similarly, if we consider $-\widetilde{\nu}$, we deduce that
\begin{equation}\label{e3.16}
-F(-\nu,0)\leq \left<D_{\xi'}F(Du,-1),\nu\right>\leq F(\nu,0).
\end{equation}
Combining with \eqref{e3.15} and \eqref{e3.16}, we conclude that
\[\begin{split}
&\int_{\partial\Omega_t}\phi_k\left<D_{\xi'}F(Dv,-1)-D_{\xi'}F(Du,-1),\nu\right>d\mathcal{H}^{n-1}\\
\geq & \int_{\partial\Omega_t\setminus(A\cup A_-)}\phi_k[\left<D_{\xi'}F(Dv,-1),\nu\right>-F(\nu,0)]d\mathcal{H}^{n-1}\\
&-\int_{\partial\Omega_t\setminus(A\cup A_+)}\phi_k[\left<D_{\xi'}F(Du,-1),\nu\right>+F(-\nu,0)]d\mathcal{H}^{n-1}.
\end{split}\]
Passing to the limit as $t\to0^+$, from ($3$), ($4$) and \eqref{e3.14}, we obtain
\[\int_{\Omega_t}\left<D_{\xi'}F(Dv,-1)-D_{\xi'}F(Du,-1),D\phi_k\right>dx=0.\]
Letting $k\to\infty$, we obtain
\[\int_{\Omega_t}\left<D_{\xi'}F(Dv,-1)-D_{\xi'}F(Du,-1),D\phi\right>dx=0,\]
where $\phi=\max\{0,u-v\}$.

The strict convexity of $F$ implies that $D\phi=0$ in $\Omega$. Since $\phi=0$ on $\Gamma_0$, we have $\phi=0$ and thus $u\leq v$ in $\Omega$.

If we only have the weak inequality $u\leq v$ on $\Gamma_0$, we can replace $v$ by $v+\varepsilon$ and the conclusion ($i$) holds by letting $\varepsilon\to0^+$.

Finally, in the case of $\Gamma_0=\emptyset$, we set $\alpha=v(x_0)-u(x_0)$ for some $x_0\in\Omega$, and replacing $u+\alpha$ by $u$. By a similar argument with $A=\emptyset$, we conclude  that $\phi=0$ and therefore $v=u+\alpha$.
\end{proof}

Now, we can show that if the anisotropic mean curvature of $\partial\Omega$ is negative at some point $x_0\in\partial\Omega$, then there will exist smooth functions $\varphi$ such that the anisotropic area $\mathcal{A}_{F}$ has no minimum in $L(\Omega,\varphi)$.

\begin{remark}
Let now $x_0\in\partial\Omega$ and letting $\delta(x)=dist(x,B_{R}(x_0))=|x-x_0|-R$ in $\Omega\setminus B_R(x_0)$ and $v=\alpha_1+\psi(\delta)$ for some constant $\alpha_1$. We get
\[F_{\xi_i\xi_j}(Dv,-1)v_{ij}=\psi''F_{\xi_i\xi_j}(\psi' D\delta,-1)\delta_i\delta_j+\psi'F_{\xi_i\xi_j}(\psi' D\delta,-1)\delta_{ij}.\]
Taking $\psi(\delta)=-\beta_1\delta^{1/2}$ and using the homogeneity of $F$,  we get
\[\begin{split}
\psi'' F_{\xi_i\xi_j}(\psi'D\delta,-1)\delta_i\delta_j=&\frac{\beta_1}{4}\delta^{-\frac{3}{2}}F_{\xi_i\xi_j}\left(-\frac{\beta_1}{2}\delta^{-\frac{1}{2}}D\delta,-1\right)\delta_i\delta_j\\
=&\frac{\beta_1}{4}\delta^{-\frac{3}{2}}\left|-\frac{\beta_1}{2}\delta^{-\frac{1}{2}}\right|^{-1}F_{\xi_i\xi_j}\left(D\delta,\frac{2\delta^{1/2}}{\beta_1}\right)\delta_i\delta_j\\
=&\frac{2}{\beta_1^2}F_{\xi_{n+1}\xi_{n+1}}\left(D\delta,\frac{2\delta^{1/2}}{\beta_1}\right)\leq\frac{2C_1}{\beta_1^2},
\end{split}\]
where  $C_1$ is a constant independent of $\beta_1$. Moreover, due to the ellipticity of $F$,
\[\begin{split}
\psi' F_{\xi_i\xi_j}(\psi'D\delta,-1)\delta_{ij}=&-\frac{\beta_1}{2}\delta^{-\frac{1}{2}}F_{\xi_i\xi_j}\left(-\frac{\beta_1}{2}\delta^{-\frac{1}{2}}D\delta,-1\right)\delta_{ij}\\
=&-\frac{\beta_1}{2}\delta^{-\frac{1}{2}}\left|-\frac{\beta_1}{2}\delta^{-\frac{1}{2}}\right|^{-1}F_{\xi_i\xi_j}\left(D\delta,\frac{2\delta^{1/2}}{\beta_1}\right)\delta_{ij}\\
=&-\frac{1}{R+\delta}F_{\xi_i\xi_j}\left(D\delta,\frac{2\delta^{1/2}}{\beta_1}\right)(I_{ij}-\delta_i\delta_j)\\
\leq&-\frac{c_0}{R+\delta}\leq-\frac{c_0}{R+\mathrm{diam}(\Omega)},
\end{split}\]
where $I=(I_{ij})$   is the identity matrix. Therefore, we conclude that
\[F_{\xi_i\xi_j}(Dv,-1)v_{ij}\leq0\quad\text{in }\Omega\setminus\overline{B}_{R}(x_0)\]
by choosing $\beta_1^2\geq\frac{2C_1(R+\mathrm{diam}(\Omega))}{c_0}$.

Let now $u$ be the minimum of the anisotropic area $\mathcal{A}_{F}$ in $C^{0,1}(\Omega)$. If we choose $\alpha_1=\sup_{\partial\Omega\setminus B_{R}(x_0)}u+\beta_1(\mathrm{diam}(\Omega))^{1/2}$, then we have $u\leq v$ on $\partial\Omega\setminus B_{R}(x_0)$. Since $D_{\xi'}F(Dv,-1)=D_{\xi'}F(D\delta,2\delta^{1/2}/\beta_1)$ and $D\delta=\frac{x-x_0}{R}$ is the unit normal vector on $\partial B_{R}(x_0)$, we deduce that
\[\lim_{\delta\to0}\left<D_{\xi'}F(Dv,-1),\nu\right>=\left<D_{\xi'}F(D\delta,0),D\delta\right>=F(D\delta,0)=F(\nu,0)\quad\text{on }\partial B_{R}(x_0).\]
Therefore, we can apply Lemma \ref{lemma3.3} with $\Gamma_+=\emptyset$ in $\Omega\setminus\overline{B}_{R}(x_0)$ to conclude that
\[\sup_{\Omega\setminus B_R(x_0)}u\leq v\leq \sup_{\partial\Omega\setminus B_R(x_0)}u+\beta_1\mathrm{diam}(\Omega)^{1/2}.\]
In particular,
\begin{equation}\label{e3.17}
\sup_{\Omega\cap\partial B_R(x_0)}u\leq v\leq \sup_{\partial\Omega\setminus B_R(x_0)}\varphi+\beta_1 \mathrm{diam}(\Omega)^{1/2}.
\end{equation}

Suppose now  the anisotropic mean curvature of $\partial\Omega$ at $x_0$ is negative, and let $R$ be small such that
\[F_{\xi_i\xi_j}(Dd,0)d_{ij}\geq\varepsilon_0>0\quad\text{in }\Omega\cap B_{R}(x_0),\]
where $d(x)$ is the distance from $x$ to $\partial\Omega$. Similarly, we take $v=\alpha_2-\beta_2d^{1/2}$ and deduce that
\[
F_{\xi_i\xi_j}(Dv,-1)v_{ij}= \frac{2}{\beta_2^2}F_{\xi_{n+1}\xi_{n+1}}\left(Dd,\frac{2d^{1/2}}{\beta_2}\right)-F_{\xi_i\xi_j}\left(Dd,\frac{2d^{1/2}}{\beta_2}\right)d_{ij}
\leq \frac{2C_1}{\beta_2^2}-\frac{\varepsilon_0}{2}\leq0
 \]
for $\beta_2^2\geq\frac{4C_1}{\varepsilon_0}$.

If we take $\alpha_2=\sup_{\Omega\cap\partial B_R(x_0)}u+\beta_2\operatorname{diam}(\Omega)^{1/2}$, using again the Lemma \ref{lemma3.3} in $ {\Omega}\cap B_{R}(x_0)$, we have
\[\sup_{\overline{\Omega}\cap B_R(x_0)}u\leq v\leq \sup_{\partial B_{R}(x_0)\cap\Omega}u+\beta_2\operatorname{diam }(\Omega)^{1/2}.\]
In particular,
\begin{equation}\label{e3.18}
\sup_{\partial\Omega\cap B_R(x_0)}\varphi\leq\sup_{\partial B_R(x_0)\cap\Omega}u+\beta_2\operatorname{diam }(\Omega)^{1/2}.
\end{equation}
Combining with \eqref{e3.17} and \eqref{e3.18}, we conclude that
\begin{equation}\label{e3.19}
\sup_{\partial\Omega\cap B_R(x_0)}\varphi\leq\sup_{\partial\Omega\setminus B_R(x_0)}\varphi+(\beta_1+\beta_2)\operatorname{diam }(\Omega)^{1/2}.
\end{equation}
Therefore, if we take a smooth function $\varphi$ such that \eqref{e3.19} is not satisfied, then the Dirichlet problem for the anisotropic area $\mathcal{A}_{F}$ with boundary datum $\varphi$ cannot have a solution.

For example, choose $\eta\in C^\infty(\partial\Omega)$ such that
$0\le \eta\le 1$, $\eta\equiv 1$ on
$\partial\Omega\cap B_{R/2}(x_0)$, and $\eta\equiv 0$ on
$\partial\Omega\setminus B_R(x_0)$. Setting
\[
    \varphi=M\eta ,
\]
we see that \eqref{e3.19} fails for $M$ sufficiently large. Hence the
Dirichlet problem with this smooth boundary trace cannot have a solution.
\end{remark}

\section{A priori estimate of the gradient}
In this section, our main purpose is to establish a priori estimate for  the gradient of $u$ in terms of the supremum of $u$.

\medskip
We first introduce some notations. Let $ {B}_R$ denote the ball centered at the origin in $\mathbb{R}^{n}$ with radius $R$
 and let   $u$ be a solution of \eqref{e1.6} in $B_R$.
We shall denote its graph by $S$ and let the normal vector to $S$ at the point $(x,u(x))$ be $\nu=\frac{(Du,-1)}{\sqrt{1+|Du|^2}}.$
We also set
\begin{equation}\label{e4.3}v=\sqrt{1+|Du|^2},\quad\quad \nu_i=u_i/v\text{ for }i=1,\cdots,n.\end{equation}
and \begin{equation}\label{e4.4}g^{ij}=\delta_{ij}-\nu_i\nu_j,\quad i,j=1,\cdots, n.\end{equation}
For any $X\in\mathbb{R}^n$, we have $g^{ij}X_iX_j=|X|^2-\langle X,\overline{\nu}\rangle^2$, where $\overline{\nu}=Du/\sqrt{1+|Du|^2}$ are the first $n$ components of $\nu$. Let $\widetilde{X}=(X,0)$ and let  $Y=\widetilde{X}-\langle \widetilde{X},\nu\rangle\nu$ denote  the projection onto the tangent space $\nu^{\bot}$. The  ellipticity of $F$ indicates that there exists some constants $\lambda$ and $\Lambda$ such that
\[\lambda|Y|^2\leq F_{\xi_k\xi_l}(\nu)Y_kY_l\leq\Lambda|Y|^2,\quad 1\leq k,l\leq n+1.\]
Notice that \[|Y|^2=|\widetilde{X}|^2-\langle \widetilde{X},\nu\rangle^2=|X|^2-\langle X,\overline{\nu}\rangle^2=g^{ij}X_jX_j,\quad 1\leq i,j\leq n,\]
and
\[F_{\xi_k\xi_l}(\nu)Y_kY_l= F_{\xi_k\xi_l}(\nu)\left(\widetilde{X}_k-\langle \widetilde{X},\nu\rangle\nu_k\right)\left(\widetilde{X}_l-\langle \widetilde{X},\nu\rangle\nu_l\right)=F_{\xi_i\xi_j}(\nu)X_i X_j, \quad 1\leq i,j\leq n,\]
where we used the homogeneity of $F$. Therefore, for any $X\in\mathbb{R}^n$, we have
\begin{equation}\label{e4.1}
\lambda g^{ij}X_iX_j\leq F_{\xi_i\xi_j}(\nu)X_i X_j\leq\Lambda g^{ij}X_iX_j \quad\text{with }1\leq i,j\leq n.
\end{equation}

\medskip
In the case of non-parametric hypersurfaces in $\mathbb{R}^{n+1}$ of the form $x_{n+1}=u(x)$, where $u(x)$ is a $C^2$ function defined on some bounded domain $\Omega\subset\mathbb{R}^n$, a general Sobolev inequality established in \cite{MS} implies that
\begin{equation}\label{e4.2}
\left\{\int_{\Omega}f^{\frac{n}{n-1}}vdx\right\}^{\frac{n-1}{n}}\leq c(n)\int_{\Omega}\left(\sqrt{g^{ij}f_if_j}+f|H|\right)vdx,
\end{equation}
where $v$ and $g^{ij}$ are defined by \eqref{e4.3} and \eqref{e4.4}, respectively. In fact, the quantity $H=\frac{1}{n}v^{-1}g^{ij}u_{ij}$ is the mean curvature of the hypersurface $x_{n+1}=u(x)$. We denote
\[N=\begin{cases}
\text{a fixed number in }(2,4)&\text{if }n=2\\
n&\text{if }n\geq3.
\end{cases}\]

\medskip

\begin{lemma}\label{lemma4.1}
Suppose that $u$ is a solution of
\begin{equation}\label{e4.15}
\mathrm{div}\left(D_{\xi'}F(Du,-1)\right)=0
\end{equation}
in $B_{\rho}(x_0)$ and set $w=\log\sqrt{1+|Du|^2}$.  Then
 \begin{equation}\label{e4.16}
  \sup_{S_{\rho/2}(x_0)}w(x)\leq c_1 \rho^{-n}\int_{S_\rho(x_0)}wd\mu_g, 
 \end{equation}
 where $S_\rho(x_0)=S\cap\left({B}_{\rho}(x_0)\times\{|u-u(x_0)|<\rho\}\right).$
\end{lemma}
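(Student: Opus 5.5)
The estimate \eqref{e4.16} will follow from a Moser iteration on the graph $S$, in the spirit of \cite{S76}. It rests on two ingredients: the anisotropic Jacobi identity for $w$ quoted in the remark after Theorem \ref{theorem1.2}, and the Michael--Simon Sobolev inequality \eqref{e4.2}.

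\textbf{Step 1: a subsolution inequality for $w$.} I first derive a divergence-form differential inequality for $w$ with respect to the area measure $d\mu_g=v\,dx$ on $S$. Since $\nu_{n+1}=-1/v$ and $w=-\log(-\nu_{n+1})$, the Jacobi equation
\[
\operatorname{div}\bigl(F_{\xi_i\xi_j}(\nu)(\nabla\nu_{n+1})_j\bigr)+|S_F|^2\nu_{n+1}=0
\]
from \cite{CL} gives, on $S$,
\[
\operatorname{div}_S\bigl(F_{\xi_i\xi_j}(\nu)w_j\bigr)=F_{\xi_i\xi_j}(\nu)w_iw_j+|S_F|^2\ge0 .
\]
Thus $w\ge0$ is a weak subsolution of a uniformly elliptic divergence operator on $S$. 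By \eqref{e4.1}, its coefficient matrix is comparable to the induced metric $g^{ij}$, with constants $\lambda$ and $\Lambda$.

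\textbf{Step 2: Caccioppoli estimates and iteration.} For a cutoff $\eta$ supported in $B_\rho(x_0)\times\{|x_{n+1}-u(x_0)|<\rho\}$, I test the inequality with $\eta^2 w^{\beta}$, $\beta\ge1$. Using \eqref{e4.1}, this gives
\[
\int_S \eta^2 w^{\beta-1}|\nabla_S w|^2\,d\mu_g\le C\beta^{-2}\int_S w^{\beta+1}|\nabla_S\eta|^2\,d\mu_g .
\]
Next I apply \eqref{e4.2} to $f=\eta\, w^{(\beta+1)/2}$. The term $f|H|$ in \eqref{e4.2} is not zero here, since the graph is anisotropically minimal, not minimal. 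I will control it by
\[
|H|\le C|A|\le C|S_F|,
\]
and absorb it using the extra good term $\int\eta^2w^{\beta}|S_F|^2$, which is retained from the same test. When $|S_F|$ is not small, Young's inequality splits $f|H|\le \varepsilon f^2|H|^2 w^{-1}+C_\varepsilon w$, so the curvature piece is absorbed and the remainder is of lower order in the iteration. This yields the reverse-Hölder step
\[
\|w\|_{L^{\chi(\beta+1)}(S_{r'})}\le \bigl(C\beta/(r-r')\bigr)^{2/(\beta+1)}\|w\|_{L^{\beta+1}(S_r)},\qquad \chi=N/(N-2)
\]
(with $N$ as fixed above). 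Iterating over radii between $\rho/2$ and $\rho$ gives an $L^\infty$--$L^2$ bound. The usual interpolation trick, namely the De Giorgi/Moser reduction for subsolutions with exponents $p<1$, or the Bombieri--Giusti lemma, lowers this to $L^1$.

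\textbf{Step 3: scaling.} The factor $\rho^{-n}$ comes from the volume of $S_\rho$, which is at most $C\rho^n$. This follows from the monotonicity, or comparison, argument for $\mathcal A_F$-minimizers, using \eqref{e2.1} and the minimizing property of $u$ in the cylinder $B_\rho(x_0)\times(u(x_0)-\rho,u(x_0)+\rho)$. The vertical truncation is handled by letting the cutoff depend also on $x_{n+1}$, and $|\nabla_S x_{n+1}|\le1$ keeps those gradients bounded by $1/\rho$.

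\textbf{Main obstacle.} I expect the main difficulty to be the mean-curvature term in \eqref{e4.2}, because the Michael--Simon inequality is isotropic while $S$ is only anisotropically minimal. Bounding $|H|$ by $|S_F|$ and absorbing it via the $|S_F|^2$ term is the step I would check most carefully; it needs $F\in C^3$ so that the comparison between the Euclidean second fundamental form and $S_F$ is uniform. The remaining steps are standard.
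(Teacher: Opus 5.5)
Your architecture matches the paper's. You use an identity $\operatorname{div}(F_{\xi_i\xi_j}(\nu)w_j)=F_{\xi_i\xi_j}(\nu)w_iw_j+(\text{curvature term})$, the Michael--Simon inequality with $H^2$ dominated by that curvature term, and Moser iteration. The paper gets the identity in weak form \eqref{e4.29} by differentiating the equation, with $v^{-2}F_{\xi_i\xi_j}(\nu)g^{lk}u_{jl}u_{ki}$ in the role of your $|S_F|^2$; quoting the Jacobi equation as in Lemma~\ref{lemma4.2} is equally fine.

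The problem is the step you singled out yourself: it does not close as written.
Applying \eqref{e4.2} directly to $f=\eta w^{(\beta+1)/2}$ controls only $\|w\|_{L^{(\beta+1)n/(2(n-1))}}$. That is no improvement over $L^{\beta+1}$, so it cannot give your reverse H\"older step with $\chi=N/(N-2)$. For that you need the $L^2$ form \eqref{e4.5}, including the interpolation factor when $n=2$. In that form the mean-curvature contribution is $\int_S\eta^2w^{\beta+1}H^2\,d\mu_g$. This is one power of $w$ more than the term $\int_S\eta^2w^{\beta}|S_F|^2\,d\mu_g$ that your test $\eta^2w^\beta$ provides. The splitting $f|H|\le\varepsilon f^2|H|^2w^{-1}+C_\varepsilon w$ does not repair this. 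It belongs to the $L^1$ form, and it leaves a remainder $C_\varepsilon\int_{\operatorname{spt}\eta}w\,d\mu_g$ of degree one in $w$. The resulting inequality is therefore not homogeneous in $w$, and it yields neither your reverse H\"older step nor the linear estimate \eqref{e4.16}.

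The repair, which is what the paper does, uses two different test functions in the identity.
Testing with $w^{q-1}\eta^2$ gives the Caccioppoli bound \eqref{e4.32}; with $q=\beta+1$ this is exactly your gradient estimate. Testing with $w^{q}\eta^2$ gives \eqref{e4.31}: you absorb the cross term $2\int F_{\xi_i\xi_j}(\nu)w_j\eta_iw^q\eta$ into the quadratic term $\int F_{\xi_i\xi_j}(\nu)w_iw_jw^q\eta^2$. The result bounds the curvature term weighted by $w^q\eta^2$ by $C\int w^q|D\eta|^2$, with the same power $q$.

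This step uses the quadratic term on the right of the identity, not merely the sign $\operatorname{div}(F_{\xi_i\xi_j}(\nu)w_j)\ge0$. If you discard it, you only get $Cq^{-1}\int w^{q+1}|D\eta|^2$, which is again one power too many. With $f^2=\eta^{Nq-N+2}w^q$ in \eqref{e4.5}, both pieces are bounded by $Cq^4\int\eta^{Nq-N}w^q|D\eta|^2$, which leads to \eqref{e4.10}.

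Three smaller points:
\begin{enumerate}
\item The comparison $H^2\le n^{-1}v^{-2}g^{ij}g^{lk}u_{il}u_{jk}\le\lambda^{-1}v^{-2}F_{\xi_i\xi_j}(\nu)g^{lk}u_{jl}u_{ik}$ needs only \eqref{e4.1}, not $F\in C^3$.
\item Your Step 3 is unnecessary. The factor $\rho^{-n}$ comes from the scale invariance of \eqref{e4.5} together with $|\nabla\eta|\le C/\rho$, so no area bound on $S_\rho$ is needed.
\item The reduction from $L^2$ to $L^1$ is the one-line estimate $I_2\le\sup(\eta^Nw)\,I_1$.
\end{enumerate}
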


\begin{proof}
Replacing $f$ by $f^{\frac{2N(n-1)}{n(N-2)}}$ in \eqref{e4.2}, we obtain
\[\begin{split}
\left(\int_{S}f^{\frac{2 N}{ N -2}}d\mu_g\right)^{\frac{n-1}{n}}&\leq C(n)\int_{S}\left(\frac{2N(n-1)}{n(N-2)}f^{\frac{2N(n-1)}{n(N-2)}-1}\sqrt{g^{ij}f_if_j}+f^{\frac{2N(n-1)}{n(N-2)}}|H|\right)d\mu_g\\
&\leq C(n)\left(\int_{S}f^{\frac{4N(n-1)}{n(N-2)}-2}d\mu_g\right)^{1/2}\times\left[\int_{S}\left(g^{ij}f_if_j+f^2H^2\right) d\mu_g\right]^{1/2}.
\end{split}\]
When $n>2$, we notice that $\frac{4N(n-1)}{n(N-2)}-2=\frac{2n}{n-2}$ with $n=N$, and the above inequality becomes
\[\left(\int_{S}f^{\frac{2n}{ n-2}}d\mu_g\right)^{\frac{n-2}{n}}\leq C(n) \int_{S}\left(g^{ij}f_if_j+f^2H^2\right) d\mu_g.\]
When $n=2$, we have the relation \[\frac{N-2}{4}=\frac{\theta}{2}+\frac{(N-2)(1-\theta)}{2N}\] where $\theta=\frac{(N-2)^2}{4}$. Then, by the interpolation inequality, we obtain
\[\int_{S}f^{\frac{4N(n-1)}{n(N-2)}-2}d\mu_g=  \int_{S}f^{\frac{4}{N-2}}d\mu_g\leq \left(\int_{S}f^2d\mu_g\right)^{\frac{N-2}{2}}\left(\int_{S}f^{\frac{2N}{N-2}}d\mu_g\right)^{\frac{4-N}{2}},\]
which implies that
\[\left( \int_{S} f^{\frac{2N}{N-2}} \, d\mu_g \right)^{\frac{N-2}{N}} \leq C(n) \left( \int_{S} f^2 \, d\mu_g \right)^{1-\frac{2}{N}} \left[ \int_{S} \left( g^{ij} f_i f_j + f^2 H^2 \right) \, d\mu_g \right]^{\frac{2}{N}}.\]
Summarizing the two cases above, we have
\begin{equation}\label{e4.5}
\left(\int_{S}f^{\frac{2 N}{ N -2}}d\mu_g\right)^{\frac{N-2}{N}}\leq C(n)\left(\int_{S}f^{2}d\mu_g\right)^{1-\frac{n}{N}}\left[\int_{S}\left(g^{ij}f_if_j+f^2H^2\right)d\mu_{g}\right]^{\frac{n}{N}}
\end{equation}

Recall that $u$ satisfies the anisotropic minimal surface equation
\[\mathrm{div}\left(D_{\xi'}F(Du,-1)\right)=0.\]
Multiplying the equation by $\eta\in C_0(\Omega)$ and integrating by parts yields
\[\int_{\Omega}F_{\xi_i}(Du,-1)\eta_i dx=0,\]
where $\eta_i=\partial\eta/\partial x_i$. Taking $\eta\in  C_0^2(\Omega)$ and replacing $\eta$ with $\eta_l$, then integrating by parts, we obtain
\[
\int_{\Omega}F_{\xi_i\xi_j}(Du,-1)u_{jl}\eta_i dx=0.
\]
Finally, replacing $\eta$ in the above identity by $\nu_l\eta$,  we obtain
\begin{equation}\label{e4.6}\begin{split}
0=&\int_{\Omega}F_{\xi_i\xi_j}(Du,-1)u_{jl}(\nu_l\eta_i+v^{-1}g^{lk}u_{ki}\eta)dx\\
=&\int_{\Omega}\left[v^{-1}F_{\xi_i\xi_j}(Du,-1)g^{lk}u_{jl}u_{ki}\eta+F_{\xi_i\xi_j}(Du,-1)v_j\eta_i\right]dx,
\end{split}\end{equation}
where $v=\sqrt{1+|Du|^2}$ and $i,j,k,l=1\cdots n$.

Set $w=\log v$, we can rewrite \eqref{e4.6} as
\[\int_\Omega\left[v^{-2}F_{\xi_i\xi_j}(\nu)g^{lk}u_{jl}u_{ki}\eta+F_{\xi_i\xi_j}(\nu)w_j\eta_i\right]dx=0.\]
Replacing $\eta$ by $v\eta$ and using the identity  $(v\eta)_i=v_i\eta+v\eta_i=vw_i\eta+v\eta_i,$ we obtain
\[F_{\xi_i\xi_j}(\nu)w_j(v\eta)_i\,dx=F_{\xi_i\xi_j}(\nu)w_jw_i\eta\,d\mu_g+F_{\xi_i\xi_j}(\nu)w_j\eta_i\,d\mu_g.\]
Therefore
\begin{equation}\label{e4.29}\int_S\left[F_{\xi_i\xi_j}(\nu)w_j\eta_i+\left(v^{-2}F_{\xi_i\xi_j}(\nu)g^{lk}u_{jl}u_{ki}+F_{\xi_i\xi_j}(\nu)w_jw_i\right)\eta\right]d\mu_g=0.\end{equation}
Replacing $\eta$ by $w^q\eta^2$ with $ q\ge 2$ in \eqref{e4.29}, we get
\begin{equation}\label{e4.30}\begin{split}
&\int_S\left(v^{-2}F_{\xi_i\xi_j}(\nu)g^{lk}u_{jl}u_{ki}+F_{\xi_i\xi_j}(\nu)w_jw_i\right)w^q\eta^2\,d\mu_g+q\int_S F_{\xi_i\xi_j}(\nu)w_jw_iw^{q-1}\eta^2\,d\mu_g\\
&=-2\int_S F_{\xi_i\xi_j}(\nu)w_j\eta_iw^q\eta\,d\mu_g.\end{split}\end{equation}
Young's inequality implies that
\[2|F_{\xi_i\xi_j}(\nu)w_j\eta_i|w^q\eta\le{1\over2}F_{\xi_i\xi_j}(\nu)w_jw_iw^q\eta^2+2w^qF_{\xi_i\xi_j}(\nu)\eta_i\eta_j.\]
Therefore,
\[\begin{split}
&\int_S\left(v^{-2}F_{\xi_i\xi_j}(\nu)g^{lk}u_{jl}u_{ki}+\frac{1}{2}F_{\xi_i\xi_j}(\nu)w_jw_i\right)w^q\eta^2\,d\mu_g+q\int_S F_{\xi_i\xi_j}(\nu)w_jw_iw^{q-1}\eta^2\,d\mu_g\\
&\le 2\int_S w^qF_{\xi_i\xi_j}(\nu)\eta_i\eta_j\,d\mu_g.\end{split}\]
In particular,
\begin{equation}\label{e4.31}\int_S v^{-2}F_{\xi_i\xi_j}(\nu)g^{lk}u_{jl}u_{ki} w^q\eta^2\,d\mu_g \le c\Lambda\int_Sw^q|D\eta|^2\,d\mu_g.\end{equation}
Replace $\eta$ by $ w^{q-1}\eta^2 $ with $q\geq 2$ in \eqref{e4.29},  we obtain
\[\begin{split}
&\int_S F_{\xi_i\xi_j}(\nu)w_j[(q-1)w^{q-2}w_i\eta^2+2w^{q-1}\eta\eta_i] d\mu_g   \\
&+ \int_S\left(v^{-2}F_{\xi_i\xi_j}(\nu)g^{lk}u_{jl}u_{ki}+F_{\xi_i\xi_j}(\nu)w_jw_i\right)w^{q-1}\eta^2 d\mu_g=0,
\end{split}\]
which implies that
\[\begin{split}
(q-1)\int_S F_{\xi_i\xi_j}(\nu)w_jw_i w^{q-2}\eta^2 d\mu_g &\leq2\int_S |F_{\xi_i\xi_j}(\nu)w_j\eta_i| w^{q-1}\eta d\mu_g \\
&\leq 2 \left(\int_S F_{\xi_i\xi_j}(\nu)w_jw_iw^{q-2}\eta^2\,d\mu_g\right)^{1/2}\left(\int_Sw^qF_{\xi_i\xi_j}(\nu)\eta_i\eta_j\,d\mu_g\right)^{1/2}
\end{split}\]
where we used the Cauchy-Schwartz inequality.  Therefore,
\begin{equation}\label{e4.32}\int_SF_{\xi_i\xi_j}(\nu)w_iw_jw^{q-2}\eta^2\,d\mu_g\le{4\over(q-1)^2}\int_Sw^qF_{\xi_i\xi_j}(\nu)\eta_i\eta_j\,d\mu_g\leq 4\Lambda\int_Sw^q|D\eta|^2\,d\mu_g.\end{equation}

Now, we apply \eqref{e4.5} with $f^2=\eta^{Nq-N+2}w^q$. By a direct calculation, we deduce that
\[ \begin{split}
f_if_j= &\left(\frac{Nq-N}{2}+1\right)^2\eta^{Nq-N}w^q\eta_i\eta_j+\frac{q^2}{4}\eta^{Nq-N+2}w^{q-2}w_iw_j \\
&+q\left(\frac{Nq-N}{2}+1\right)\eta^{Nq-N+1}w^{q-1}w_i\eta_j
 \end{split}\]
and
\[f^2 H^2\leq \eta^{Nq-N+2}w^q n^{-1}v^{-2}g^{ij}g^{lk}u_{il}u_{jk}\leq \lambda^{-1} \eta^{Nq-N+2}w^qv^{-2} F_{\xi_i\xi_j}(\nu)g^{lk}u_{jl}u_{ik},\]
where we used \eqref{e4.1} that $g^{ij}X_iX_j\leq \lambda^{-1}F_{\xi_i\xi_j}(\nu)X_iX_j$ in the last inequality. Therefore, we have
\begin{equation}\label{e4.34}\begin{split}
&\left[\int_{S}\left(\eta^{Nq-N+2}w^q\right)^{\frac{N}{N-2}}d\mu_g\right]^{\frac{N-2}{N}}\\
\leq &C(n)\left(\int_{S}\eta^{Nq-N+2}w^qd\mu_g\right)^{1-\frac{n}{N}}\left(\int_{S}q^2\eta^{Nq-N}w^qg^{ij}\eta_i\eta_jd\mu_g
\right.\\
&\left. +\int_{S}q^2\eta^{Nq-N+2}w^{q-2}g^{ij}w_iw_jd\mu_g+ \lambda^{-1}\int_{S}\eta^{Nq-N+2}w^q v^{-2} F_{\xi_i\xi_j}(\nu)g^{lk}u_{jl}u_{ik}d\mu_g\right)^{\frac{n}{N}}\\
 \leq & C(n)\left(\int_{S}\eta^{Nq-N+2}w^qd\mu_g\right)^{1-\frac{n}{N}}\left[q^2\Lambda\lambda^{-1}\int_{S}\eta^{Nq-N}w^q|D\eta|^2d\mu_g
\right.\\
&\left. +q^2 \lambda^{-1}\int_{S}\eta^{Nq-N+2}\left(w^{q-2}F_{\xi_i\xi_j}(\nu)w_iw_j+w^{q}v^{-2} F_{\xi_i\xi_j}(\nu)g^{lk}u_{jl}u_{ik}\right)d\mu_g\right]^{\frac{n}{N}}\\
\end{split}\end{equation}
\[\begin{split}\leq& C(n)\left(\int_{S}\eta^{Nq-N+2}w^qd\mu_g\right)^{1-\frac{n}{N}} \left[q^2\Lambda\lambda^{-1}\int_{S}\eta^{Nq-N}w^q|D\eta|^2d\mu_g\right.\\
&\left.
+cq^2\Lambda\lambda^{-1}(Nq-N+2)^2\int_{S}\eta^{Nq-N}w^{q}|D\eta|^2d\mu_g\right]^{\frac{n}{N}}\\
\leq& C(n,\lambda,\Lambda)\left(\int_{S}\eta^{Nq-N+2}w^qd\mu_g\right)^{1-\frac{n}{N}} \left(q^4\int_{S}\eta^{Nq-N}w^{q}|D\eta|^2d\mu_g\right)^{\frac{n}{N}},
\end{split}\]
where we used \eqref{e4.31} and \eqref{e4.32} with $\eta^2$ replaced by  $\eta^{Nq-N+2}$ in the  third inequality.

Let $\eta_1, \eta_2 \in C_0^\infty(\mathbb{R})$ be standard cut-off functions satisfying:
 \[0 \le \eta_i(s) \le 1, \quad \eta_i(s) = 1 \text{ for } |s| \le \frac{1}{2}, \quad \eta_i(s) = 0 \text{ for } |s| \ge 1, \quad |\eta_i'| \le 4,  \quad i=1,2.\]
We define the product-type cut-off function $\eta: \mathbb{R}^{n+1} \to [0, 1]$ by
\begin{equation}\label{cutoff-def}
\eta(x,x_{n+1}) := \eta_1 \left( \frac{|x - x_0|}{\rho} \right) \eta_2 \left( \frac{|x_{n+1} - u(x_0)|}{\rho} \right).
\end{equation}
Clearly, $\eta(x,x_{n+1}) = 1$ on $S\cap(B_{\rho/2}\times \{|x_{n+1}-u(x_0)|<\rho/2\})$ and $\operatorname{supp}(\eta) \subset S\cap(B_{\rho }\times \{|x_{n+1}-u(x_0)|<\rho \})$.
By computing the gradient of $\eta$ in $\mathbb{R}^{n+1}$, we easily obtain
\[
\left| D_{\mathbb{R}^{n+1}} \eta(X) \right|^2 \le \frac{C}{\rho^2} \left[ \left(\eta_1'\right)^2 \eta_2^2 + \eta_1^2 \left(\eta_2'\right)^2 \right] \le \frac{C}{\rho^2}.
\]
When restricted to the graph $S$, we obtain
\begin{equation}\label{e4.33}
|\nabla_S \eta|^2 \leq \left| D_{\mathbb{R}^{n+1}} \eta \right|^2 \le \frac{C}{\rho^2},
\end{equation}
which holds independently of any prior bound on $Du$.


 Therefore, from \eqref{e4.34} and \eqref{e4.33}, we derive that
  \begin{equation}\label{e4.9}
 \left[\int_{S_\rho(x_0)}\left(\eta^{Nq-N+2}w^q\right)^{\frac{N}{N-2}}d\mu_g\right]^{\frac{N-2}{N}}
\leq C q^4\rho^{-\frac{2n}{N}}\int_{S_\rho(x_0)}\eta^{Nq-N}w^q d\mu_g,
 \end{equation}
where $C$ is a constant depending only on $N,\lambda,\Lambda$. Multiplying both sides of the above inequality by $\rho^{-n/\kappa}$ with $\kappa=N/(N-2)$  and letting
\[I_{q}=\rho^{-n}\int_{S_{\rho}(x_0)}\eta^{Nq-N}w^qd\mu_g=\rho^{-n}\int_{S_{\rho}(x_0)}\eta^{-N}(\eta^{N}w)^qd\mu_g\quad\text{with }q\geq2,\]
we obtain from \eqref{e4.9}  that
\begin{equation}\label{e4.10}
I_{q\kappa}^{1/\kappa}\leq Cq^4 I_{q}.
\end{equation}
Setting $q=2\kappa^{\gamma}$, then \eqref{e4.10} yields
\[\left(I_{2\kappa^{\gamma+1}}\right)^{1/(2\kappa^{\gamma+1})}\leq C^{1/(2\kappa^{\gamma})}(2\kappa^{\gamma})^{2/(\kappa^{\gamma})}\left(I_{2\kappa^{\gamma}}\right)^{1/(2\kappa^{\gamma})}\quad\text{with }\gamma=0, 1,\cdots.\]
Integrating this inequality and note that $\sum_{\gamma}\kappa^{-\gamma}, \sum_{\gamma}\gamma\kappa^{-\gamma}<\infty$, we then have
\begin{equation}\label{e4.11}
 \sup\{\eta^{N}w\}=\lim_{\gamma\to\infty}\left(I_{2\kappa^\gamma}\right)^{1/(2\kappa^\gamma)} \le C^{\frac{1}{2}\sum_{\gamma=0}^\infty \kappa^{-\gamma}} 4^{\sum_{\gamma=0}^\infty \kappa^{-\gamma}} \kappa^{2\sum_{\gamma=0}^\infty \gamma\kappa^{-\gamma}} I_2^{1/2} \le C I_2^{1/2}.
\end{equation}
To reduce the $L^2$-type norm $I_2$ to the $L^1$ norm, we observe that
\[I_2 = \rho^{-n}\int_{S_\rho} \eta^N w^2 d\mu_g \leq  \sup_{S_\rho}\left(\eta^N w\right)  \left( \rho^{-n}\int_{S_\rho} w d\mu_g \right) \leq C (I_2)^{1/2} I_1.\]
This immediately implies $(I_2)^{1/2} \leq C I_1$. Combining this with \eqref{e4.11}, we conclude:
\[
\sup_{S_{\rho/2}(x_0)} w \leq C I_1 = c_1 \rho^{-n} \int_{S_\rho(x_0)} w d\mu_g,
\]
which completes the proof.
\end{proof}

\begin{lemma}\label{lemma4.2}
Let $u$ be a solution of the anisotropic minimal surface equation \eqref{e4.15} in $B_{3\rho}$ with $u(0)=0$ . Then
\begin{equation}\label{e4.14}
\rho^{-n}\int_{S_{\rho}}wd\mu_g\leq c_2\left(1+\rho^{-1}\sup_{B_{3\rho} }u\right),
\end{equation}
 where $S_\rho=S\cap(B_\rho\times(-\rho,\rho) )$.
\end{lemma}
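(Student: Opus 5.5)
We test the divergence identity behind \eqref{e4.29} with a cutoff that uses the height of the graph; the zeroth-order terms are nonnegative and can be dropped. In divergence form \eqref{e4.29} says
\[
\operatorname{div}\bigl(F_{\xi_i\xi_j}(\nu)w_j\bigr)=F_{\xi_i\xi_j}(\nu)w_iw_j+v^{-2}F_{\xi_i\xi_j}(\nu)g^{lk}u_{jl}u_{ki}\ \ge\ F_{\xi_i\xi_j}(\nu)w_iw_j
\]
with respect to $d\mu_g$. This is the anisotropic Jacobi identity for $w=-\log(-\nu_{n+1})$ described in the Remark after Theorem \ref{theorem1.2}.

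\textbf{Choice of test function.} Set $M=\sup_{B_{3\rho}}u$; we may assume $M\ge0$, since $u(0)=0$. Put
\[
\eta=\zeta(x)\,\frac{(2\rho+M-u)^+}{\rho}\wedge\text{(cap)},
\]
where $\zeta\in C_0^\infty(B_{2\rho})$, $\zeta\equiv1$ on $B_\rho$, $|D\zeta|\le C/\rho$. The point is that on $S_\rho$ we have $u>-\rho$, so $2\rho+M-u\le 3\rho+M$, while $2\rho+M-u\ge\rho$ everywhere in $B_{3\rho}$. We should rather use the one-sided lower bound $2\rho+M-u\ge \rho$ to get a lower bound $\eta\gtrsim 1$ on $S_\rho$, and the upper bound $\eta\le C(1+M/\rho)$ on the support. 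A cleaner variant is to use $\eta=\zeta(x)\,\psi(u)$ with $\psi$ supported where $u>-2\rho$; this ensures $S_\rho$ sits inside $\{\eta\ge c\}$, but then $\psi'$ is only controlled from below.

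\textbf{Main inequality.} Insert $\eta w$, or better $\eta$ alone, into \eqref{e4.29}. Testing with $\eta$ gives
\[
\int F_{\xi_i\xi_j}w_iw_j\,\eta\,d\mu_g\le -\int F_{\xi_i\xi_j}(\nu)w_j\eta_i\,d\mu_g .
\]
The key algebraic facts are that the tangential gradient of $u$ on $S$ has $|\nabla_S u|\le1$, and that $\eta_i$ involves $u_i=v\nu_i$. Because of the weight $v$ in $d\mu_g=v\,dx$, the term with $u_i$ combines with $F_{\xi_i\xi_j}(\nu)w_j\nu_i$. Homogeneity gives $F_{\xi_i\xi_j}(\nu)\nu_j=-F_{\xi_i\xi_{n+1}}(\nu)\nu_{n+1}$, which is bounded. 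Cauchy--Schwarz with \eqref{e4.1} then bounds the right side by
\[
\tfrac12\int F w_iw_j\,\eta+C\int \bigl(|\nabla_S\eta|^2/\eta\bigr)\,d\mu_g .
\]
To pass to $\int w$ itself, I would instead test with $\eta\,w$, or use $w\le$ an integrated gradient via the Sobolev inequality \eqref{e4.5}. The more robust path, following Simon \cite{S76}, is to test \eqref{e4.29} with $\eta=\zeta^2(x)\,(2\rho+M-u)/\rho$ directly and integrate $w$ against the Jacobi structure. Since $\nu_{n+1}=-1/v$ solves the Jacobi equation, we have $-\operatorname{div}(F_{\xi_i\xi_j}(\nu)\nabla_j\log v)\le0$ tested against this nonnegative $\eta$, and the resulting boundary-type terms are bounded by
\[
\int |D\zeta|\,\frac{3\rho+M}{\rho}\,d\mu_g\ +\ \int\zeta^2\,\rho^{-1}\,|F_{\xi_i\xi_j}\nu_i u\text{-terms}|\,d\mu_g .
\]
The area of the graph over $B_{2\rho}$ within the height window is $\le C\rho^{n-1}(\rho+M)$. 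This follows by the standard comparison: test the equation with $\zeta\min(u^+,\dots)$ and use $F(Du,-1)\ge\alpha v$ together with $|D_{\xi'}F|\le\beta$. Combining the two bounds gives $\int_{S_\rho}w\,d\mu_g\le C\rho^{n}(1+M/\rho)$.

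\textbf{Main obstacle.} The hard step is turning the Jacobi inequality into a bound on $\int w$ rather than on $\int F w_iw_j$. I would first try to control $w$ through $\log v\le v$, together with the area bound $\int_{S_{2\rho}}d\mu_g\le C\rho^{n-1}(\rho+M)$. Here the height truncation in $S_\rho$ is used to restrict to the region where $u\le M$. If that does not work, I would apply \eqref{e4.5} with $f=\eta w^{1/2}$ and absorb the gradient term with the inequality above.
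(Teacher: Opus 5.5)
\textbf{Gap: nothing produces $\int_{S_\rho}w\,d\mu_g$ on the left.} The gap is the step you yourself flag as the main obstacle, and nothing in the proposal closes it.

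Testing the Jacobi inequality $\operatorname{div}(F_{\xi_i\xi_j}(\nu)w_j)\ge F_{\xi_i\xi_j}(\nu)w_iw_j$ with a nonnegative $\eta$ only bounds weighted integrals of $F_{\xi_i\xi_j}(\nu)w_iw_j$. This holds whether or not $\eta$ depends on the height, and with or without an extra factor $w$. In every case $w$ never appears undifferentiated on the good side. So your final claim, that the two bounds combine to give $\int_{S_\rho}w\,d\mu_g\le C\rho^n(1+M/\rho)$, has no argument behind it.

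Your fallbacks also fail:
\begin{itemize}
\item $\log v\le v$ only gives $\int_{S_\rho}w\,d\mu_g\le\int v^2\,dx$. The area bound $\int v\,dx\le C\rho^{n-1}(\rho+M)$ does not control this.
\item Applying \eqref{e4.5} to $f=\eta w^{1/2}$ bounds a higher norm of $w$ by $\int|\nabla\eta|^2w\,d\mu_g$ over a larger set. It also leaves a term with $|\nabla w|^2/w$, which the Jacobi bound does not control where $v$ is close to $1$. The argument is therefore circular.
\end{itemize}

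\textbf{The missing idea.} You need to integrate by parts against the height in the minimal surface equation itself, not in the Jacobi identity. The paper tests $\operatorname{div}(D_{\xi'}F(Du,-1))=0$ with $u_\rho\zeta w$, where $u_\rho=\min\{\max\{u+\rho,0\},2\rho\}$. Equivalently, $\int\delta^F_{n+1}(u_\rho\zeta w)\,d\mu_g=0$.

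By $1$-homogeneity, $F_{\xi_i}(Du,-1)u_i=F(Du,-1)+F_{\xi_{n+1}}(Du,-1)\ge\alpha v-C$. So the term where the derivative falls on $u_\rho$ is at least $\alpha\int_{\{|u|<\rho\}}\zeta w\,d\mu_g-C\int\zeta w\,dx$. The $dx$-integral is absorbed using $Cw\le\frac{\alpha}{2}vw+c_\alpha$. This is what puts $\int_{S_\rho}w\,d\mu_g$ on the left.

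The remaining terms carry the factor $|u_\rho|\le 2\rho$:
\begin{itemize}
\item The term with $D\zeta$ is bounded by the area of the graph over $B_{2\rho}\cap\{u>-\rho\}$, because $w\,dx=wv^{-1}\,d\mu_g\le e^{-1}\,d\mu_g$.
\item The term with $Dw$ is bounded by $2\rho\int\zeta|\delta^F_{n+1}w|\,d\mu_g$. After Cauchy--Schwarz with a height cutoff $\psi=\zeta(x)\tau(x_{n+1})$, this is controlled by exactly your Jacobi estimate $\int\psi^2F_{\xi_i\xi_j}(\nu)w_iw_j\,d\mu_g\le 4\Lambda\int|\nabla\psi|^2\,d\mu_g$, together with your area bound.
\end{itemize}
For that area bound the height window must be $\{u>-2\rho\}$, so test with $\gamma\max\{u+2\rho,0\}$ rather than $u^+$.

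Your two auxiliary estimates are the right ones. What is missing is this first-variation identity, which converts $\int w$ into quantities they control.
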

\begin{proof}
Let $\zeta\in C_0^{\infty}(B_{2\rho})$, $0\leq\zeta\leq1$, $\zeta=1$ in $B_{\rho}$ and $|D\zeta|\leq 2\rho^{-1}$. We consider the operator $\delta_{n+1}^F=F(\nu)D_{n+1}-\nu_{n+1}F_{\xi_i}(\nu)D_i$ where $\nu_{n+1}=-v^{-1}$ is the $(n+1)$-th component of the unit normal $\nu$ and $i=1,\cdots,n$. Define
\[u_{\rho}=\begin{cases}
2\rho&\text{if }u\geq \rho\\
u+\rho&\text{if }|u|<\rho\\
0&\text{if } u<-\rho,
\end{cases}\]
then we have
\[\delta_{n+1}^Fu_{\rho}=\begin{cases}
0&\text{if }|u|>\rho\\
F(\nu)-F_{\xi_{n+1}}(Du,-1)\nu_{n+1}&\text{if }|u|<\rho,
\end{cases}\]
where we used the relation that $F_{\xi_i}(\nu)u_i=F(Du,-1)+F_{\xi_{n+1}}(Du,-1)$.

Let $C_{2\rho}=S\cap(B_{2\rho}\times\mathbb{R})$. The divergence theorem implies that
\[\begin{split}\int_{C_{2\rho}}\delta_{n+1}^F(u_{\rho}\zeta w) d\mu_g
=&-\int_{C_{2\rho}}\nu_{n+1}F_{\xi_i}(\nu) D_i(u_{\rho}\zeta w) d\mu_g\\
=&\int_{B_{2\rho}}F_{\xi_i}(\nu) D_i(u_{\rho}\zeta w) dx
=-\int_{B_{2\rho}}\mathrm{div}(D_{\xi'}F(Du,-1))(u_{\rho}\zeta w) dx=0.\end{split}\]
Therefore,
\[\int_{C_{2\rho}\cap\{|u|<\rho\}}\left(F(\nu)-F_{\xi_{n+1}}(\nu)\nu_{n+1}\right)\zeta wd\mu_g+\int_{C_{2\rho}}u_{\rho}[w(\delta_{n+1}^F\zeta)+\zeta(\delta_{n+1}^Fw)]d\mu_g=0,\]
where $\{|u|<\rho\}:=\{x\in B_{3\rho}:|u(x)|<\rho\}.$ Taking into account that $\alpha<F(\nu)<\beta$, 
we obtain
\begin{equation}\label{e4.17}\begin{split}
\alpha \int_{C_{2\rho}\cap\{|u|<\rho\}}\zeta w d\mu_g\leq & \int_{C_{2\rho}\cap\{|u|<\rho\}}\zeta F_{\xi_{n+1}}(\nu)w\nu_{n+1}d\mu_g+2\rho\int_{C_{2\rho}\cap\{u>-\rho\}}w|\delta_{n+1}^F\zeta|d\mu_g\\
&+2\rho \int_{C_{2\rho}\cap\{u>-\rho\}} \zeta|\delta_{n+1}^Fw|d\mu_g,
\end{split}\end{equation}
where we used the fact that $|u_{\rho}|\leq 2\rho$.

For the first integral on the right hand side, since $|D_{\xi}F(\nu)|\leq c$, we obtain
\[
\int_{C_{2\rho}\cap\{|u|<\rho\}}\zeta F_{\xi_{n+1}}(\nu)w\nu_{n+1}d\mu_g=-\int_{B_{2\rho}\cap\{|u|<\rho\}}\zeta F_{\xi_{n+1}}(\nu)wdx\leq c\int_{B_{2\rho}\cap\{|u|<\rho\}}\zeta w dx.\]
Recalling $v \geq 1$ and $w = \ln v \geq 0$, we employ the algebraic inequality $c w \leq \frac{\alpha}{2} v w + c_{\alpha}$ to get
\[ c\int_{B_{2\rho}\cap\{|u|<\rho\}}\zeta w dx\leq  \frac{\alpha}{2}\int_{B_{2\rho}\cap\{|u|<\rho\}}\zeta w v dx+ c_{\alpha}\int_{B_{2\rho}\cap\{|u|<\rho\}}\zeta   dx \leq  \frac{\alpha}{2}\int_{C_{2\rho}\cap\{|u|<\rho\}}\zeta w d\mu_g+c_{\alpha}\rho^n,\]
which implies that
\begin{equation}\label{e4.18}\int_{C_{2\rho}\cap\{|u|<\rho\}}\zeta F_{\xi_{n+1}}(\nu)w\nu_{n+1}d\mu_g\leq \frac{\alpha}{2}\int_{C_{2\rho}\cap\{|u|<\rho\}}\zeta w d\mu_g+c_{\alpha}\rho^n
\end{equation}

Now, let $\tau(t)$ be a smooth function with support in the interval $[-2\rho,\sup_{B_{2\rho}}u+\rho]$ with $0\leq\tau\leq1$, $\tau\equiv 1$ in $[-\rho,\sup_{B_{2\rho}}u]$ and $|\tau'|\leq 2/\rho$.  Let $\psi(z)=\zeta(x)\tau(z_{n+1})$ and since $\zeta$ is independent of $z_{n+1}$, we have
\[|\delta_{n+1}^F\zeta|=|\nu_{n+1}F_{\xi_{i}}(\nu)\zeta_{i}|\leq c|\nu_{n+1}||D\zeta|\leq2c|\nu_{n+1}|\rho^{-1}.\]
Using the relation $-w\nu_{n+1}=w e^{-w}\leq e^{-1}$, we obtain
\begin{equation}\label{e4.19}
2\rho\int_{C_{2\rho}\cap\{u>-\rho\}}w|\delta_{n+1}^F\zeta|d\mu_g\leq 4c \int_{C_{2\rho}\cap\{u>-\rho\}}w|\nu_{n+1}|d\mu_g\leq4ce^{-1}\mu_g(C_{2\rho}\cap \mathrm{spt}\psi)
\end{equation}
and
\begin{equation}\label{e4.20}
\int_{C_{2\rho}\cap\{u>-\rho\}} \zeta|\delta_{n+1}^Fw|d\mu_g\leq\int_{C_{2\rho}}\psi|\delta_{n+1}^Fw|d\mu_g\leq\left[\mu_g\left(C_{2\rho}\cap\mathrm{spt}\psi\right)\right]^{1/2}\left(\int_{C_{2\rho}}\psi^2|\delta_{n+1}^F w|^2d\mu_g\right)^{1/2}.
\end{equation}

From the homogeneity of $F$, we find that
\[\left(F(\nu)e_{n+1}-\nu_{n+1}D_{\xi}F(\nu)\right)\cdot\nu=F(\nu)\nu_{n+1}-\nu_{n+1} F(\nu)=0.\]
Then  we have
\[\delta_{n+1}^F w=\nabla w\cdot\left(F(\nu)e_{n+1}-\nu_{n+1}D_{\xi}F(\nu)\right)=(\nabla w)^{T}\cdot\left(F(\nu)e_{n+1}-\nu_{n+1}D_{\xi}F(\nu)\right),\]
where $(\nabla w)^{T}=\nabla w-\langle\nabla w,\nu\rangle\nu$ and $\nu$ is the normal vector to the graph of $u$ .
Therefore,
\[|\delta_{n+1}^Fw|^2\leq\left|F(\nu)e_{n+1}-\nu_{n+1}D_{\xi}F(\nu)\right|^2 \left|(\nabla w)^{T}\right|^2\leq c_0 \left|(\nabla w)^{T}\right|^2,\]
where $c_0:=\sup_{\xi\in\mathbb{S}^n}|F(\xi)e_{n+1}-\xi_{n+1}D_{\xi}F(\xi)|^2<\infty$. Moreover, the ellipticity of $F$ gives that
\[\left|(\nabla w)^{T}\right|^2\leq \lambda^{-1}F_{\xi_i\xi_j}(\nu)  w_i w_j,\]
where we used  the Einstein summation convention for $i,j=1,\dots, n$.
Using the above inequalities, we obtain that
\begin{equation}\label{e4.21}
\int_{C_{2\rho}}\psi^2|\delta^F_{n+1}w|^2 d\mu_g\leq c_0\lambda^{-1}\int_{C_{2\rho}} \psi^2 F_{\xi_i\xi_j }(\nu) w_i w_j d\mu_g.
\end{equation}

We notice that the $(n+1)$-th component $\nu_{n+1}$ satisfies the following Jacobi equation
\[\operatorname{div}\left(F_{\xi_i\xi_j}(\nu)(\nabla  \nu_{n+1})_j\right)+|S_F|^2\nu_{n+1}=0,\]
where $S_F$ is the $F$-anisotropic shape operator on $S$. Since $\nabla  \nu_{n+1}=-\nu_{n+1}\nabla  w$, we have
\[\begin{split}
\operatorname{div}\left(F_{\xi_i\xi_j}(\nu)(\nabla  \nu_{n+1})_j\right)=&-\operatorname{div} \left(\nu_{n+1}F_{\xi_i\xi_j}(\nu)  w_j\right)\\
=&-\nu_{n+1}\operatorname{div} \left(F_{\xi_i\xi_j}(\nu)  w_j\right)-F_{\xi_i\xi_j}(\nu)(\nabla\nu_{n+1})_{i} w_j\\
=&-\nu_{n+1}\operatorname{div} \left(F_{\xi_i\xi_j}(\nu)  w_j\right)+\nu_{n+1}F_{\xi_i\xi_j}(\nu)  w_{i} w_j,
\end{split}\]
which implies that
\[0=-\nu_{n+1}\operatorname{div} \left(F_{\xi_i\xi_j}(\nu)  w_j\right)+\nu_{n+1}F_{\xi_i\xi_j}(\nu)  w_{i} w_j+|S_F|^2\nu_{n+1}.\]
Dividing by $\nu_{n+1}$ yields
\begin{equation}\label{e4.22}\begin{split}
\operatorname{div}\left(F_{\xi_i\xi_j}(\nu)  w_j\right)=&F_{\xi_i\xi_j}(\nu) w_{i}  w_j+|S_F|^2\\
\geq&F_{\xi_i\xi_j}(\nu) w_{i}  w_j.
\end{split}\end{equation}
Combining \eqref{e4.21} and \eqref{e4.22}, we conclude that
\[\begin{split}
\int_{C_{2\rho}} \psi^2 F_{\xi_i\xi_j }(\nu) w_{i}  w_j d\mu_g
\leq&\int_{C_{2\rho}}\psi^2\operatorname{div}_{S}\left(F_{\xi_i\xi_j}(\nu)  w_j\right)d\mu_g\\
=&-2\int_{C_{2\rho}}\psi F_{\xi_i\xi_j}(\nu) \psi_i  w_j d\mu_g\\
\leq&\frac{1}{2}\int_{C_{2\rho}} \psi^2 F_{\xi_i\xi_j }(\nu)w_i w_j d\mu_g+2\int_{C_{2\rho}} F_{\xi_i\xi_j }(\nu)\psi_i\psi_j d\mu_g,
\end{split}\]
which implies that
\[\int_{C_{2\rho}} \psi^2 F_{\xi_i\xi_j }(\nu) w_{i}  w_j d\mu_g\leq 4\int_{C_{2\rho}} F_{\xi_i\xi_j }(\nu)\psi_i\psi_j d\mu_g.\]
Using the property of  $F$ in \eqref{e4.1}, we get
\begin{equation}\label{e4.23}
\int_{C_{2\rho}} \psi^2 F_{\xi_i\xi_j }(\nu) w_{i}  w_j d\mu_g\leq 4\Lambda\int_{C_{2\rho}}|\nabla \psi|^2d\mu_g.
\end{equation}
Therefore, from \eqref{e4.21} and \eqref{e4.23}, we deduce that
\begin{equation}\label{e4.24}
\int_{C_{2\rho}}\psi^2|\delta^F_{n+1}w|^2 d\mu_g\leq c_0\lambda^{-1}\Lambda \int_{C_{2\rho}}|\nabla \psi|^2d\mu_g\leq c\rho^{-2}\mu_g\left(C_{2\rho}\cap\operatorname{spt}\psi\right).
\end{equation}
In conclusion, we have from \eqref{e4.17}-\eqref{e4.24} that
\begin{equation}\label{e4.25}
\int_{S_{\rho}}w d\mu_g\leq\int_{C_{2\rho}\cap\{|z_{n+1}|<\rho\}}\zeta wd\mu_g\leq c_3 \mu_g\left(C_{2\rho}\cap\operatorname{spt}\psi\right).
\end{equation}
It remains to estimate $\mu_g\left(C_{2\rho}\cap\operatorname{spt}\psi\right)$. Let $\gamma(x)$ be a function with support in $B_{3\rho}$ with $0\leq\gamma\leq1$, $\gamma\equiv1$ in $B_{2\rho}$ and $|D\gamma|\leq 2\rho^{-1}$. We have
\[\int_{C_{3\rho}}\delta_{n+1}^F[\gamma\max\{u+2\rho,0\}]d\mu_g=0,\]
and then
\[\int_{C_{3\rho}\cap\{z_{n+1}>-2\rho\}}\gamma\left(F(\nu)-\nu_{n+1}F_{\xi_{n+1}}(\nu)\right)d\mu_g+\int_{C_{3\rho}}\max\{u+2\rho,0\}\delta_{n+1}^F\gamma d\mu_g=0.\]
Arguing as above, we find that since $\operatorname{spt}\psi\subset B_{2\rho}\times(-2\rho,\infty)$,
\[\begin{split}
\mu_g\left(C_{2\rho}\cap\operatorname{spt}\psi\right)\leq& \alpha^{-1}\int_{C_{3\rho}}|\nu_{n+1}||F_{\xi_{n+1}}(\nu)|d\mu_g+(\sup_{B_{3\rho}}u+2\rho)\int_{C_{3\rho}}|\nu_{n+1}||F_{\xi_i}(\nu)D_i\gamma| d\mu_g\\
=&\alpha^{-1}\int_{B_{3\rho}}|F_{\xi_{n+1}}(\nu)|dx+(\sup_{B_{3\rho}}u+2\rho)\int_{B_{3\rho}}|F_{\xi_i}(\nu)D_i\gamma| dx\\
\leq& c_4\rho^{n}\left(1+\rho^{-1}\sup_{B_{3\rho}}u\right),
\end{split}\]
where $c_4$ is a constant independent of $\rho$ and the conclusion follows from \eqref{e4.25}.
\end{proof}

\medskip
Combining   lemma \ref{lemma4.1} and lemma \ref{lemma4.2}, and recalling that $w=\log\sqrt{1+|Du|^2}$, we obtain the following proposition.
\begin{proposition}\label{proposition4.1}(A priori estimate of the gradient)
Let $u$ be a solution of the anisotropic minimal surface equation in $B_{3\rho}$. Then
\begin{equation}\label{e4.26}
\sup_{S_{\rho/2}(x_0)}|Du|\leq \exp\left\{c_5\left(1+\frac{\sup_{B_{3\rho}(x_0)}u-u(x_0)}{\rho}\right)\right\}.
\end{equation}
In particular, $|Du(x_0)|$ is bounded by the right-hand side of \eqref{e4.26}.
\end{proposition}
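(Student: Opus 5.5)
The plan is to chain Lemma \ref{lemma4.1} and Lemma \ref{lemma4.2} after normalizing by translation.

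Since the equation \eqref{e4.15} involves only $Du$, the function $\tilde u(x)=u(x+x_0)-u(x_0)$ is again a solution, now in $B_{3\rho}$ with $\tilde u(0)=0$. Its gradient, the graph set $S_\rho$ and $w$ are simply translates of the original ones, and
\[
\sup_{B_{3\rho}}\tilde u=\sup_{B_{3\rho}(x_0)}u-u(x_0).
\]
So it suffices to prove \eqref{e4.26} for $x_0=0$ and $u(0)=0$.

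Next, apply Lemma \ref{lemma4.1} with $x_0=0$; the hypothesis holds since $u$ solves the equation on $B_\rho\subset B_{3\rho}$. This gives
\[
\sup_{S_{\rho/2}}w\le c_1\rho^{-n}\int_{S_\rho}w\,d\mu_g .
\]
Here the set $S_\rho(0)=S\cap(B_\rho\times\{|u|<\rho\})$ coincides with the $S_\rho$ of Lemma \ref{lemma4.2}, because $u(0)=0$.

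Then apply Lemma \ref{lemma4.2}, whose hypotheses (solution in $B_{3\rho}$, $u(0)=0$) are exactly ours. It bounds the right-hand side, and we obtain
\[
\sup_{S_{\rho/2}}w\le c_1c_2\bigl(1+\rho^{-1}\sup_{B_{3\rho}}u\bigr).
\]
Set $c_5=c_1c_2$. Since $w=\log\sqrt{1+|Du|^2}\ge\log|Du|$, exponentiating gives $|Du|\le e^{w}\le\exp\{c_5(1+\rho^{-1}\sup u)\}$ on $S_{\rho/2}$. Undoing the translation yields \eqref{e4.26}.

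For the last claim, note that the point $(x_0,u(x_0))$ lies in $S_{\rho/2}(x_0)$, so $|Du(x_0)|$ is bounded by the same right-hand side. Note also that $\sup_{B_{3\rho}}u\ge u(0)=0$, so the exponent is at least $c_5$ and no sign issue arises.

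The main point to check is the consistency of the sets and constants between the two lemmas. In particular, $c_1$ and $c_2$ must depend only on $n,\lambda,\Lambda,\alpha,\beta$ and not on $\rho$ or $u$, which follows from the scale-invariant normalization $\rho^{-n}$ in both lemmas. Beyond that, the argument is a direct composition of the two lemmas.
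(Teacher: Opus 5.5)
Your proposal is correct and is exactly the paper's argument. The paper obtains Proposition~\ref{proposition4.1} by chaining Lemma~\ref{lemma4.1} and Lemma~\ref{lemma4.2} and then using $|Du|\le e^{w}$ with $w=\log\sqrt{1+|Du|^2}$; your translation to $x_0=0$, $u(0)=0$, and your check that $S_\rho(0)$ coincides with the $S_\rho$ of Lemma~\ref{lemma4.2}, simply spell out the normalization the paper leaves implicit.
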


\section{Proof of Theorem \ref{theorem1.2} and Theorem \ref{corollary4.1}}
Having established the crucial a priori gradient bounds, this section is devoted to the proofs of Theorem \ref{theorem1.2} and Corollary \ref{corollary4.1}. We first recall a classical regularity result for second-order elliptic equations (see, e.g.,\cite{GE}):

\begin{lemma}\label{lemma4.3}
Let $u\in W_{loc}^{1,2}(\Omega)$ be a solution of
\[\int_{\Omega}(a_{ij}(x)D_j u-f_i)D_i\varphi dx=0\quad\text{for any }\varphi\in C_0^{\infty}(\Omega)\]
with conditions
\[a_{ij}\xi_i\xi_j\geq\beta |\xi|^2,\quad\beta>0.\]
Suppose that the coefficients $a_{ij}$ and the functions $f_i$ are of class $C^{m,\alpha}(\Omega)$. Then $u\in C^{m+1,\alpha}(\Omega)$.
\end{lemma}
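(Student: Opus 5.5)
The statement is the classical interior Schauder theory for divergence-form equations, so I would prove it in two stages: the base case $m=0$ via Campanato-type integral estimates with frozen coefficients, then an induction on $m$ via difference quotients. Throughout I would assume, as is implicit, that $a_{ij}$ is bounded (it is locally, being $C^{m,\alpha}$). Since the conclusion is local, I would fix a ball $B_{R_0}(x_0)\Subset\Omega$ and work there.

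\emph{Base case $m=0$.} For $B_r(y)\subset B_R(y)\subset B_{R_0}(x_0)$ I would split $u=h+(u-h)$. Here $h$ solves the constant-coefficient equation $\operatorname{div}(a_{ij}(y)D_jh)=0$ in $B_R(y)$ with $h=u$ on $\partial B_R(y)$. For $h$, the standard estimate for constant-coefficient elliptic equations applied to its derivatives gives
\[
\int_{B_r}|Dh-(Dh)_r|^2\le C(r/R)^{n+2}\int_{B_R}|Dh-(Dh)_R|^2 .
\]
The difference $u-h$ satisfies
\[
\operatorname{div}\big(a_{ij}(y)D_j(u-h)\big)=\operatorname{div}\big((a_{ij}(y)-a_{ij})D_ju+f_i-f_i(y)\big).
\]
Testing with $u-h$ yields
\[
\int_{B_R}|D(u-h)|^2\le C R^{2\alpha}\int_{B_R}|Du|^2+CR^{n+2\alpha}.
\]
Combining the two bounds gives a Campanato inequality for $\phi(r)=\int_{B_r}|Du-(Du)_r|^2$. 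The standard iteration lemma first gives the Morrey bound $\int_{B_r}|Du|^2\le Cr^{n-2+2\alpha'}$ for every $\alpha'<1$ (which I would bootstrap), then $\phi(r)\le Cr^{n+2\alpha}$. Campanato's characterization then gives $Du\in C^{0,\alpha}$, so $u\in C^{1,\alpha}$ locally.

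\emph{Induction step.} Assume the claim for $m-1$, and let $a_{ij},f_i\in C^{m,\alpha}$ with $m\ge1$. By the case $m-1$ we already have $u\in C^{m,\alpha}\subset C^{1,\alpha}$. Taking difference quotients $\Delta_h^ku$ in the weak formulation, using the Caccioppoli inequality to bound $\|D\Delta_h^k u\|_{L^2}$ uniformly in $h$, and letting $h\to0$ gives $D_ku\in W^{1,2}_{loc}$. It also shows that $D_ku$ weakly solves
\[
\int_\Omega\big(a_{ij}D_j(D_ku)-\tilde f_i\big)D_i\varphi\,dx=0,\qquad \tilde f_i=D_kf_i-(D_ka_{ij})D_ju .
\]
Here $\tilde f_i\in C^{m-1,\alpha}$, since $D_kf_i,D_ka_{ij}\in C^{m-1,\alpha}$ and $D_ju\in C^{m-1,\alpha}$. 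The inductive hypothesis then yields $D_ku\in C^{m,\alpha}$ for every $k$, that is, $u\in C^{m+1,\alpha}$.

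\emph{Main obstacle.} The hardest step is the base case, specifically the bootstrap from the mere $W^{1,2}$ hypothesis to the Morrey decay of $\int_{B_r}|Du|^2$. This decay is needed to absorb the perturbation term $R^{2\alpha}\int_{B_R}|Du|^2$. I would handle it by first running the iteration lemma with $\phi$ replaced by $\int_{B_r}|Du|^2$ to obtain decay $r^{n-\epsilon}$, and then feeding this back into the oscillation estimate.
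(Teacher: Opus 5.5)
The paper gives no proof of this lemma; it quotes it as classical Schauder theory from \cite{GE}, and your outline is the standard argument behind that citation and is correct. That outline is the frozen-coefficient comparison plus Campanato iteration for $m=0$, followed by difference quotients and induction on $m$ via the equation for $D_ku$ with $\tilde f_i=D_kf_i-(D_ka_{ij})D_ju$. The one step worth writing out is your ``bootstrap'': the Morrey bound $\int_{B_r}|Du|^2\le Cr^{n-2+2\alpha'}$ only gives $\phi(r)\le Cr^{n+2(\alpha+\alpha'-1)}$, hence $Du\in C^{0,\alpha+\alpha'-1}_{loc}$ for $\alpha'>1-\alpha$, so $Du\in L^\infty_{loc}$ and $\int_{B_R}|Du|^2\le CR^n$, and a second pass of the iteration then yields $\phi(r)\le Cr^{n+2\alpha}$.
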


\begin{proof}[proof of Theorem \ref{theorem1.2}]
Let $\{\varphi_j\}$ be a sequence of functions in $ C^2(\mathbb{R}^n)$, converging uniformly to $\varphi$ on $\partial\Omega$. By Theorem \ref{theorem1.1} we know that there exist some $u_j\in C^{0,1}(\Omega)$ minimizes the anisotropic area $\mathcal{A}_F$ in $L(\Omega,\varphi_j)$.
From \eqref{e3.1}, we have
\[\sup_{\Omega}|u_j-u_k|\leq\sup_{\partial\Omega}|\varphi_j-\varphi_k|,\]
and therefore, $u_j$ converge uniformly to some $u$ in $\Omega$. Now, let $K\subset\Omega$ be a compact set. Since $u_j$ satisfy the anisotropic minimal surface equation \eqref{e4.15}, from proposition \ref{proposition4.1}, we deduce that
\[\sup_{K}|Du_j|\leq L,\]
where $L$ is a constant depends on $K$ but not on $j$.
Let $w=\partial u_j/\partial x_s$ ($s=1,\cdots,n$), then it satisfies the equation
\[\frac{\partial}{\partial x_i}\left(a_{ik}(x)\frac{\partial w}{\partial x_k}\right)=0,\]
where $a_{ik}=F_{\xi_i\xi_k}(Du_j,-1)$. For any $\zeta\in\mathbb{R}^n$ denote by $\zeta=(\zeta_1,\cdots,\zeta_n)\neq0$, we set $\widetilde{\zeta}=(\zeta_1,\cdots,\zeta_n,0)$, then by the ellipticity of $F$, we have
\[\begin{split}
\sum_{i,j=1}^{n}F_{\xi_i\xi_j}(Du_j,-1)\zeta_i\zeta_j=&\sum_{i,j=1}^{n+1}F_{\widetilde{\zeta}_i\widetilde{\zeta}_j}(Du_j,-1)\widetilde{\zeta}_i\widetilde{\zeta}_j\\
=&\frac{1}{\sqrt{1+|Du_j|^2}}\sum_{i,j=1}^{n+1}F_{\widetilde{\zeta}_i\widetilde{\zeta}_j}\left(\frac{(Du_j,-1)}{\sqrt{1+|Du_j|^2}}\right)\widetilde{\zeta}_i\widetilde{\zeta}_j\\
\geq&c_6|\widetilde{\zeta}|^2=c_6|\zeta|^2,
\end{split}\]
where $c_6$ is a positive constant depend on $L$ and $\lambda$. Therefore, the theory of uniformly elliptic equations (see lemma \ref{lemma4.3}) gives a bound for the derivatives of any order:
\[\sup_{K}|D^{m}u_j|\leq L(K,m).\]
In particular, we may conclude that $u\in C^2(\Omega)\cap C^0(\overline{\Omega})$ and that $u_j\to u$ in $C_{loc}^2(\Omega)$ so that $u$ is a solution of the anisotropic Dirichlet problem \eqref{e1.2}.

\end{proof}

To prove Theorem \ref{corollary4.1}, we need the following  Harnack's inequality.

\begin{lemma}\label{lemma4.4}(\cite{GE})
Let $u\in W_{loc}^{1,2}(\Omega)$ be a positive weak solution of
\[\partial_i\left(a_{ij}(x)D_j u\right)=0\quad\text{ in }B_R\]
with conditions
\[a_{ij}\xi_i\xi_j\geq\beta |\xi|^2,\quad\beta>0.\]
Then for any $\theta<1$, there exists a constant $c$ depending on $\theta$ but not on $u$, such that
\[\sup_{B_{\theta R}}u\leq c \inf_{B_{\theta R}}u.\]
\end{lemma}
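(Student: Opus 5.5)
This is the classical De Giorgi--Nash--Moser Harnack inequality, and I would prove it by Moser's iteration. One point must be made explicit first: the statement implicitly assumes the coefficients are bounded, $|a_{ij}|\le\beta^{-1}$ say. This holds in our application, since $a_{ij}=F_{\xi_i\xi_j}(Du,-1)$ with $|Du|$ locally bounded by Proposition \ref{proposition4.1}. The constant $c$ will then depend only on $n$, $\theta$ and the ellipticity ratio. By the scaling $x\mapsto Rx$ the equation keeps the same ellipticity constants, so I may take $R=1$. Replacing $u$ by $u+\varepsilon$ and letting $\varepsilon\to0$ at the end, I may also assume $u\ge\varepsilon>0$, so every power of $u$ is an admissible test function.

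\emph{Step 1 (local boundedness).} Test the equation with $\eta^2u^{\gamma}$, where $\gamma>0$ and $\eta$ is a cutoff. The ellipticity and boundedness of $a_{ij}$ give the Caccioppoli-type estimate
\[\int\eta^2|D(u^{(\gamma+1)/2})|^2\le C(\gamma)\int|D\eta|^2u^{\gamma+1},\]
with $C(\gamma)$ bounded polynomially as long as $\gamma$ stays away from $-1$. Combining this with the Sobolev inequality, exactly as in the iteration of Lemma \ref{lemma4.1}, yields a reverse H\"older step with gain $\kappa=n/(n-2)$; any $\kappa>1$ works when $n=2$. Iterating over shrinking radii gives, for every $p>0$,
\[\sup_{B_{\theta}}u\le C(p,\theta)\Big(\int_{B_{(1+\theta)/2}}u^p\Big)^{1/p}.\]
To reach small $p$ one iterates first from $p\ge2$ and then lowers the exponent by the standard interpolation-and-absorption trick.

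\emph{Step 2 (weak Harnack).} Test instead with $\eta^2u^{\gamma}$ for $\gamma<-1$. The same iteration, now run for negative powers, gives
\[\inf_{B_\theta}u\ge c\Big(\int_{B_{(1+\theta)/2}}u^{-p}\Big)^{-1/p}\quad\text{for every }p>0.\]

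\emph{Step 3 (crossover).} It remains to connect positive and negative powers, that is, to find $p_0>0$ with
\[\int u^{p_0}\int u^{-p_0}\le C.\]
I would test the equation with $\eta^2/u$. This gives
\[\int\eta^2|D\log u|^2\le C\int|D\eta|^2,\]
and applying it on every ball $B_r(y)\subset B_1$ together with Poincar\'e shows $\log u\in BMO$ with a controlled norm. The John--Nirenberg inequality then yields $\int e^{p_0|\log u-(\log u)_B|}\le C$, which is precisely the crossover estimate. Combining Steps 1--3 gives $\sup_{B_\theta}u\le c\inf_{B_\theta}u$.

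I expect Step 3 to be the main obstacle. One needs either John--Nirenberg or the Bombieri--Giusti abstract lemma, and the covering argument must respect the nested balls inside $B_1$; everything else is routine bookkeeping of the iteration already carried out in Lemma \ref{lemma4.1}.
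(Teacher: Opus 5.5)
Your proof is correct and is the standard one. The paper gives no argument of its own and simply quotes this classical Harnack inequality from the literature; its usual proof is exactly your Moser scheme: a sup bound via positive powers, an inf bound via negative powers, and the crossover through $\log u\in\mathrm{BMO}$ with John--Nirenberg or the Bombieri--Giusti lemma. Two small points: for unbounded $u$, positive powers of $u+\varepsilon$ need the usual truncation before they are admissible test functions, and in Step 1 the Caccioppoli constant $\bigl((\gamma+1)/\gamma\bigr)^2(\Lambda/\beta)^2$ blows up as $\gamma\to 0$, not at $\gamma=-1$, which is harmless since you start the iteration from $p\ge 2$.

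You are also right that the printed statement omits the upper bound $|a_{ij}|\le\Lambda$, without which no $c=c(\theta)$ can exist. In the application this bound is automatic, since $F_{\xi_i\xi_j}(Du,-1)=(1+|Du|^2)^{-1/2}F_{\xi_i\xi_j}(\nu)$. What the Liouville argument actually needs is the uniform lower ellipticity bound coming from the global estimate $\sup_{\mathbb{R}^n}|Du|\le c_6$, not just local boundedness of $Du$. Only then, via your scaling remark, is $c$ independent of $\rho$.
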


\begin{proof}[proof of Theorem \ref{corollary4.1}]
Let $x_0\in\mathbb{R}^n$, we have
\[\sup_{B_\rho(x_0)}u\leq C(1+\rho+|x_0|).\]
Therefore, from \eqref{e4.26},
\[\sup_{S_{\rho/2}(x_0)}|Du|\leq\exp\left\{c_5(1+C)+c_5 \rho^{-1}(|u(x_0)|+C(1+|x_0|))\right\}.\]
Letting $\rho\to\infty$, we obtain
\[\sup_{\mathbb{R}^n}|Du|\leq \exp\{c_5(1+C)\}:=c_6.\]
Similarly, if we set $w=\partial u/\partial x_s$ ($s=1,\cdots,n$), then it satisfies the following uniformly elliptic equation
\begin{equation}\label{e4.28}\frac{\partial}{\partial x_i}\left(a_{ik}(x)\frac{\partial w}{\partial x_k}\right)=0,\end{equation}
where $a_{ik}=F_{\xi_i\xi_k}(Du,-1)$. Since $w$ is bounded, the non-negative function $z=w-\inf w$ satisfies the same equation \eqref{e4.28}. Therefore, the Harnack's inequality implies that
\[\sup_{B_\rho}z\leq c\inf_{B_\rho}z\]
for any $\rho>0$ with $c$ independent of $\rho$. Letting $\rho\to\infty$, we obtain $\sup_{\mathbb{R}^n}z=0$ and thus $w=const$, which indicates that $u$ is an affine function.
\end{proof}

\section{Direct Methods}
In this section, we use the direct method  in the calculus of variations to minimize the anisotropic area
\[\mathcal{A}_{F}(u,\Omega)=\int_{\Omega}F(Du,-1) \]
which is the anisotropic total variation of $u$ in $BV(\Omega)$ defined by \eqref{e2.2}, with prescribed values $\varphi(x)$ on  $\partial\Omega$.  For simplicity, throughout this section we shall omit the trace operator and simply write $u$ for $\operatorname{Tr}u$  on $\partial\Omega$.

\begin{lemma}\label{lemma5.1}
Let $\mathcal{C}_{R}^+=\mathcal{B}_{R}\times (0,R)$ where $\mathcal{B}_R\subset\mathbb{R}^{n-1}$ is a ball centered at the origin with radius $R$. Let $\varphi$ be a function in $L^1(\mathcal{B}_R)$ with compact support. Then, for any $\varepsilon>0$, there exists a function $v\in W^{1,1}(\mathcal{C}_R^+)$ with trace $\varphi$ on $\mathcal{B}_R$ such that
\begin{equation}\label{e5.1}
\int_{\mathcal{C}_R^+}|v|dx\leq\varepsilon\int_{\mathcal{B}_R}|\varphi|d\mathcal{H}^{n-1}
\end{equation}
and \begin{equation}\label{e5.2}
\int_{\mathcal{C}_R^+}F(Dv,0)dx\leq(1+\varepsilon)\int_{\mathcal{B}_R}F(e_n,0)|\varphi|d\mathcal{H}^{n-1},
\end{equation}
where $e_n=(0,\cdots,1)$.
\end{lemma}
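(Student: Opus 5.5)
This is the anisotropic version of Giusti's extension lemma (Lemma 2.4 in \emph{Minimal Surfaces and Functions of Bounded Variation}). The plan is to approximate $\varphi$ by smooth data, extend each smooth piece by a thin linear cut-off in the normal direction, and glue the pieces in a telescoping series whose layers shrink geometrically.

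\textbf{Approximation.} We may assume $\varphi\not\equiv0$. Since $\varphi\in L^1(\mathcal{B}_R)$ has compact support, we choose $\varphi_k\in C_0^\infty(\mathcal{B}_R)$ with $\varphi_k\to\varphi$ in $L^1$ and
\[
\sum_k\|\varphi_{k+1}-\varphi_k\|_{L^1}\le(1+\varepsilon/2)\|\varphi\|_{L^1}.
\]
This is possible by taking $\varphi_0$ close to $\varphi$ and making the later increments summable with tiny total. Write $\psi_0=\varphi_0$ and $\psi_k=\varphi_k-\varphi_{k-1}$, so that $\sum_k\psi_k=\varphi$ in $L^1$ and $\sum_k\|\psi_k\|_{L^1}\le(1+\varepsilon/2)\|\varphi\|_{L^1}$.

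\textbf{Layer construction.} Choose a decreasing sequence $t_k\downarrow0$ with $t_0<R$, to be fixed below. For $x=(x',x_n)$ set
\[
v(x',x_n)=\sum_k\psi_k(x')\Big(1-\frac{x_n}{t_k}\Big)_+ .
\]
For each $k$ the summand is Lipschitz, and it has trace $\psi_k$ on $\{x_n=0\}$. Its gradient is
\[
\Big(D'\psi_k\big(1-\tfrac{x_n}{t_k}\big)_+,\ -\tfrac{\psi_k}{t_k}\chi_{\{x_n<t_k\}}\Big).
\]

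\textbf{Estimating the integrals.} Since $F(\cdot,0)$ is $1$-homogeneous, even and convex, and $D'\psi_k$ is bounded, we have
\[
\int_{\mathcal{C}_R^+}F(Dv_k,0)\,dx\le\int_0^{t_k}\!\!\int_{\mathcal{B}_R}\Big(\frac{|\psi_k|}{t_k}F(e_n,0)+\beta|D'\psi_k|\Big)\,dx'\,dx_n
=F(e_n,0)\|\psi_k\|_{L^1}+\beta t_k\|D'\psi_k\|_{L^1}.
\]
The first inequality uses subadditivity of $F(\cdot,0)$ and the bound $F\le\beta|\cdot|$ from \eqref{e2.1}. Similarly,
\[
\int_{\mathcal{C}_R^+}|v_k|\,dx\le t_k\|\psi_k\|_{L^1}.
\]
We now choose $t_k$ so small that
\[
\beta t_k\|D'\psi_k\|_{L^1}\le\frac{\varepsilon}{2^{k+2}}F(e_n,0)\|\varphi\|_{L^1}
\quad\text{and}\quad t_k\le\varepsilon/2 .
\]
Summing over $k$ and using convexity of $F(\cdot,0)$ (the triangle inequality) gives
\[
\int F(Dv,0)\le F(e_n,0)\Big[(1+\tfrac{\varepsilon}{2})+\tfrac{\varepsilon}{2}\Big]\|\varphi\|_{L^1}=(1+\varepsilon)\int_{\mathcal{B}_R}F(e_n,0)|\varphi|\,d\mathcal{H}^{n-1},
\]
which is \eqref{e5.2}. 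Summing the second bound gives $\int|v|\le\frac{\varepsilon}{2}(1+\frac{\varepsilon}{2})\|\varphi\|_{L^1}$, which is at most $\varepsilon\|\varphi\|_{L^1}$ for $\varepsilon\le 2$; this is \eqref{e5.1}. The case $\varepsilon>2$ follows from the case $\varepsilon=2$.

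\textbf{Convergence and trace (main obstacle).} The delicate point is that the series defining $v$ and $Dv$ converge in $L^1$ and that $\operatorname{Tr}v=\varphi$. Convergence in $L^1$ of both $v$ and $Dv$ follows from the summable bounds above, so $v\in W^{1,1}(\mathcal{C}_R^+)$ as an $L^1$ limit of functions with $L^1$-Cauchy gradients. The trace operator $W^{1,1}\to L^1(\mathcal{B}_R)$ is continuous, so $\operatorname{Tr}v=\sum_k\psi_k=\varphi$. One must also check that the cylinder's lateral and top boundaries cause no trouble. This holds because each $\psi_k$ has compact support in $\mathcal{B}_R$ and every $t_k<R$, so $v$ vanishes near $\partial\mathcal{B}_R\times(0,R)$ and near $\{x_n=R\}$.
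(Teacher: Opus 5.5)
Your proof is correct and follows essentially the same route as the paper's (Giusti-type) extension argument. Both glue smooth approximants telescopically, let the normal derivative contribute $F(e_n,0)$ times the total $L^1$ variation of the telescoping sequence, and choose the layers thin enough that the tangential gradients are negligible. The differences are cosmetic: you attach to each increment $\psi_k$ its own ramp $(1-x_n/t_k)_+$, whereas the paper interpolates linearly between $\varphi_k$ and $\varphi_{k+1}$ on disjoint layers $[t_{k+1},t_k]$ starting from $\varphi_0=0$. Your version has the advantage that $v\in W^{1,1}$ and $\operatorname{Tr}v=\varphi$ follow directly from completeness and continuity of the trace, which the paper leaves implicit. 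One small fix: your displayed approximation condition omits $\|\varphi_0\|_{L^1}$, so it does not by itself give the bound on $\sum_k\|\psi_k\|_{L^1}$; your stated choice of $\varphi_0$ close to $\varphi$ with tiny later increments does supply it.
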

\begin{proof}
Let $\{\varphi_k\}$ be a sequence of $C^{\infty}$-functions in $\mathcal{B}_R$ and $\varphi_k\to\varphi$ in $L^1(\mathcal{B}_R)$. Assume that $\varphi_0=0$,
\begin{equation}\label{e5.3}\int_{\mathcal{B}_R}|\varphi_k |d\mathcal{H}^{n-1}\leq 2\int_{\mathcal{B}_R}|\varphi |d\mathcal{H}^{n-1}\end{equation}
and
\begin{equation}\label{e5.4}\sum_{k=0}^{\infty}\int_{\mathcal{B}_R}|\varphi_k -\varphi_{k+1} |d\mathcal{H}^{n-1}\leq\left(1+\frac{\varepsilon}{2}\right)\int_{\mathcal{B}_R}|\varphi |d\mathcal{H}^{n-1}.\end{equation}
Let $\{t_k\}$ be a decreasing sequence, converging to $0$. We set
\[v(x)=v(z,t)=\begin{cases}
\frac{t-t_{k+1}}{t_k-t_{k+1}}\varphi_k(z)+\frac{t_k-t}{t_k-t_{k+1}}\varphi_{k+1}(z)&\text{if }t_{k+1}\leq t\leq t_k,\\
0&\text{if }t>t_0.
\end{cases}\]
Therefore, we have
\[\begin{split}
\int_{\mathcal{C}_R^+}F(Dv,0) =&\sum_{k=0}^{\infty}\int_{t_{k+1}}^{t_{k}}\int_{\mathcal{B}_R}F(Dv,0)d\mathcal{H}^{n-1}dt\\
\leq&\sum_{k=0}^{\infty}(t_k-t_{k+1})\int_{\mathcal{B}_R}\left[F(D\varphi_k,0,0)+F(D\varphi_{k+1},0,0)+\frac{|\varphi_k-\varphi_{k+1}|}{t_k-t_{k+1}}F(e_n,0)\right]d\mathcal{H}^{n-1}.
\end{split}\]
If we choose $\{t_k\}$ such that $4t_0<\varepsilon$ and
\[t_k-t_{k+1}\leq\frac{\varepsilon\int_{\mathcal{B}_R}F(e_n,0)|\varphi| d\mathcal{H}^{n-1}}{\int_{\mathcal{B}_R}\left[F(D\varphi_k,0,0)+F(D\varphi_{k+1},0,0)\right]d\mathcal{H}^{n-1}+1}2^{-k-2},\]
then from \eqref{e5.3}, \eqref{e5.4}, we obtain
\[\begin{split}
\int_{\mathcal{C}_R^+}F(Dv,0)&\leq \left(1+\frac{\varepsilon}{2}\right)\int_{\mathcal{B}_R}F(e_n,0)|\varphi|d\mathcal{H}^{n-1}+\varepsilon\sum_{k=0}^{\infty}2^{-k-2}\int_{\mathcal{B}_R}F(e_n,0)|\varphi| d\mathcal{H}^{n-1}\\
&=(1+\varepsilon)\int_{\mathcal{B}_R}F(e_n,0)|\varphi|d\mathcal{H}^{n-1}
\end{split}\]
and
\[\begin{split}
\int_{\mathcal{C}_R^+}|v| \leq&\sum_{k=0}^{\infty}\left(\int_{\mathcal{B}_R}|\varphi_k|d\mathcal{H}^{n-1}+\int_{\mathcal{B}_R}|\varphi_{k+1}|d\mathcal{H}^{n-1}\right)(t_k-t_{k+1})\\
\leq& 4\int_{\mathcal{B}_R}|\varphi|d\mathcal{H}^{n-1}\sum_{k=0}^{\infty}(t_k-t_{k+1})=4t_0\int_{\mathcal{B}_R}|\varphi|d\mathcal{H}^{n-1}<\varepsilon \int_{\mathcal{B}_R}|\varphi|d\mathcal{H}^{n-1}.
\end{split}\]
Then, we complete the proof.

\end{proof}

By a standard argument based on a partition of unity, we obtain the following proposition:
\begin{proposition}\label{proposition5.1}
Let $\Omega$ be a bounded open set with $C^1$ boundary $\partial\Omega$ and let $\varphi\in L^1(\partial\Omega)$. Then, for any $\varepsilon>0$, there exists a function $v\in W^{1,1}(\Omega)$ having trace $\varphi$ on $\partial\Omega$ such that
\begin{equation}\label{e5.5}
\int_{\Omega}|v| \leq\varepsilon \int_{\partial\Omega}|\varphi|d\mathcal{H}^{n-1},
\end{equation}
\begin{equation}\label{e5.6}
\int_{\Omega}F(Dv,0) \leq(1+\varepsilon) \int_{\partial\Omega}F(\nu,0)|\varphi|d\mathcal{H}^{n-1},
\end{equation}
where $\nu$ denotes the outward unit normal vector to $\partial\Omega$.
\end{proposition}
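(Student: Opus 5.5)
The plan is to reduce Proposition \ref{proposition5.1} to Lemma \ref{lemma5.1} by flattening the boundary locally and gluing the pieces with a partition of unity, keeping track of how the anisotropic weight $F(\nu,0)$ transforms under the flattening maps. Fix $\varepsilon>0$ and a small parameter $\delta>0$ to be chosen in terms of $\varepsilon$. Since $\partial\Omega$ is compact and $C^1$, we cover it by finitely many open sets $U_1,\dots,U_m$. In each $U_l$, after a rigid motion, $\partial\Omega$ is the graph $x_n=h_l(x')$ of a $C^1$ function over a ball $\mathcal B_{R}\subset\mathbb R^{n-1}$, and $\Omega\cap U_l=\{h_l(x')<x_n<h_l(x')+R\}$ (the sign convention is chosen so the region lies on the appropriate side). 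We shrink the $U_l$ so that $|Dh_l|\le\delta$ on each chart. This is possible because, after rotating so that the inner normal at a center point is $e_n$, the $C^1$ regularity makes $Dh_l$ small nearby. Then we take a smooth partition of unity $\{\theta_l\}$ subordinate to $\{U_l\}$ with $\sum_l\theta_l=1$ on a neighborhood of $\partial\Omega$, and we split $\varphi=\sum_l\theta_l\varphi$ on $\partial\Omega$.

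For each $l$, we pull back $\varphi_l:=\theta_l\varphi$ to $\mathcal B_R$ via $\tilde\varphi_l(x')=\varphi_l(x',h_l(x'))$, which has compact support in $\mathcal B_R$. We apply Lemma \ref{lemma5.1} with parameter $\delta$ to obtain $w_l\in W^{1,1}(\mathcal C_R^+)$ with trace $\tilde\varphi_l$. We may also arrange that $w_l$ vanishes near the lateral boundary of the cylinder, by multiplying by a cutoff in $x'$ that equals $1$ on the support of $\tilde\varphi_l$; the extra gradient term is controlled by $\int|w_l|$, which is small by \eqref{e5.1}. We then set $v_l(x',x_n)=w_l(x',x_n-h_l(x'))$ on $\Omega\cap U_l$, extend it by zero, and define $v=\sum_lv_l$. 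By construction $v\in W^{1,1}(\Omega)$, its trace is $\sum_l\varphi_l=\varphi$, and $\int_\Omega|v|\le\sum_l\delta\int|\tilde\varphi_l|\le C\delta\int_{\partial\Omega}|\varphi|$ because the Jacobian of the flattening is $1$. This gives \eqref{e5.5} once $\delta$ is small.

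The main obstacle is \eqref{e5.6}, for two reasons: the anisotropic weight must come out as $F(\nu,0)$ rather than $F(e_n,0)$, and the pieces overlap. For the weight, the chain rule gives $Dv_l=(D'w_l-\partial_nw_l\,Dh_l,\ \partial_nw_l)$. Since $F$ is Lipschitz with constant $\beta$,
\[
F(Dv_l,0)\le F(D'w_l,\partial_n w_l,0)+\beta\delta|\partial_nw_l|.
\]
Moreover $|\nu-e_n|\le C\delta$ inside the chart, up to the sign convention. So applying Lemma \ref{lemma5.1} with $F(e_n,0)$ replaced by the frozen direction costs only a factor $1+C\delta$, using $\alpha\le F\le\beta$ on unit vectors. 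We also have $d\mathcal H^{n-1}=\sqrt{1+|Dh_l|^2}\,dx'\ge dx'$. For the overlaps, subadditivity and homogeneity of $F$ give $F(Dv,0)\le\sum_lF(Dv_l,0)$, so
\[
\int_\Omega F(Dv,0)\le(1+C\delta)\sum_l\int_{\partial\Omega}F(\nu,0)\,\theta_l|\varphi|\,d\mathcal H^{n-1}=(1+C\delta)\int_{\partial\Omega}F(\nu,0)|\varphi|\,d\mathcal H^{n-1},
\]
using $\theta_l\ge0$ and $\sum_l\theta_l=1$ on $\partial\Omega$. This is where the partition of unity is essential: the absolute values add up exactly because $|\theta_l\varphi|=\theta_l|\varphi|$.

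Finally, choosing $\delta$ so that $C\delta\le\varepsilon$ yields both \eqref{e5.5} and \eqref{e5.6}.
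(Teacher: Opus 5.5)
Your proposal is correct and is essentially the ``standard argument based on a partition of unity'' that the paper invokes without writing it out: you use small-slope $C^1$ charts, apply Lemma \ref{lemma5.1} to each $\theta_l\varphi$ with the rotated norm $p\mapsto F(O_l^{T}p,0)$ ($O_l$ the chart rotation, so the weight on $e_n$ is $F$ of the frozen inner normal), absorb the slope term, the normal-freezing error and the area element into a factor $1+C\delta$, and use $\sum_l\theta_l|\varphi|=|\varphi|$ so the pieces add up exactly. Two bookkeeping points need fixing: the parameter in Lemma \ref{lemma5.1} must be chosen after the charts and cutoffs are fixed, since the cutoff error is $\beta\|D\chi_l\|_\infty$ times that parameter times $\int|\tilde\varphi_l|$ and $\|D\chi_l\|_\infty$ can blow up as the charts shrink with $\delta$; and $w_l$ must also vanish near the top $t=R$ of the cylinder for the zero extension to lie in $W^{1,1}(\Omega)$, which the construction in Lemma \ref{lemma5.1} provides since $w_l=0$ for $t>t_0$.
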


\begin{proposition}\label{proposition5.2}
Let $\Omega$ be a bounded open set with $C^1$ boundary $\partial\Omega$, and let $\varphi\in L^1(\partial\Omega)$. We have
\begin{equation}\label{e5.7}
\begin{split}&\inf\left\{\mathcal{A}_F(u,\Omega): u\in BV(\Omega), u=\varphi\text{ on }\partial\Omega\right\}\\
=&\inf\left\{\mathcal{A}_F(u,\Omega)+\int_{\partial\Omega}F(\nu,0)|u-\varphi|d\mathcal{H}^{n-1}: u\in BV(\Omega)\right\}.
\end{split}\end{equation}
\end{proposition}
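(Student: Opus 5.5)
Denote the left-hand infimum in \eqref{e5.7} by $I_1$ and the right-hand one by $I_2$. The inequality $I_2\le I_1$ is immediate: every competitor $u$ with trace $\varphi$ has vanishing penalization term, so its value in the second problem equals $\mathcal{A}_F(u,\Omega)$. The whole proof is therefore the reverse inequality $I_1\le I_2$.

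Fix $u\in BV(\Omega)$ and $\varepsilon>0$. Since $\operatorname{Tr}u\in L^1(\partial\Omega)$, the difference $\varphi-\operatorname{Tr}u$ belongs to $L^1(\partial\Omega)$. Apply Proposition \ref{proposition5.1} to this datum to get $w_\varepsilon\in W^{1,1}(\Omega)$ with trace $\varphi-u$ on $\partial\Omega$ and
\[
\int_\Omega F(Dw_\varepsilon,0)\,dx\le(1+\varepsilon)\int_{\partial\Omega}F(\nu,0)|u-\varphi|\,d\mathcal{H}^{n-1}.
\]
Set $u_\varepsilon=u+w_\varepsilon\in BV(\Omega)$. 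Its trace is exactly $\varphi$, so $u_\varepsilon$ is admissible for $I_1$.

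The key step is a subadditivity estimate
\[
\mathcal{A}_F(u+w,\Omega)\le\mathcal{A}_F(u,\Omega)+\int_\Omega F(Dw,0)\,dx,
\]
valid for $u\in BV(\Omega)$ and $w\in W^{1,1}(\Omega)$. For smooth $u$ it follows pointwise from convexity and homogeneity of $F$: $F(a+b)\le F(a)+F(b)$ applied with $a=(Du,-1)$ and $b=(Dw,0)$. For general $u$, I would prove it from the dual definition \eqref{e2.2}. Take any $g$ with $F^o(g)\le1$. Then
\[
\int_\Omega\bigl(g_{n+1}+(u+w)\operatorname{div}g'\bigr)dx=\int_\Omega\bigl(g_{n+1}+u\operatorname{div}g'\bigr)dx-\int_\Omega g'\cdot Dw\,dx.
\]
The first term is at most $\mathcal{A}_F(u,\Omega)$. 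The second is bounded by the Cauchy--Schwarz-type inequality $|\langle(Dw,0),g\rangle|\le F(Dw,0)F^o(g)$. Taking the supremum over $g$ gives the estimate. This is the step I regard as the main point. It needs some care because the convention pairing $F$ with $F^o$ in \eqref{e2.2} must match the inequality $|\langle x,\xi\rangle|\le F(x)F^o(\xi)$, and evenness of $F$ is used to handle the sign.

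Combining the two steps,
\[
I_1\le\mathcal{A}_F(u_\varepsilon,\Omega)\le\mathcal{A}_F(u,\Omega)+(1+\varepsilon)\int_{\partial\Omega}F(\nu,0)|u-\varphi|\,d\mathcal{H}^{n-1}.
\]
Letting $\varepsilon\to0$ and then taking the infimum over $u\in BV(\Omega)$ yields $I_1\le I_2$. No approximation of $u$ is needed, since Proposition \ref{proposition5.1} only requires $L^1$ boundary data.
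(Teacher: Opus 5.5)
Your proof is correct and follows the paper's argument: the paper also applies Proposition \ref{proposition5.1} to the datum $\varphi-u$, sets $v=u+w$ (which has trace $\varphi$), and bounds $\mathcal{A}_F(v,\Omega)$ by $\mathcal{A}_F(u,\Omega)+\int_\Omega F(Dw,0)\,dx$ before letting $\varepsilon\to0$. Your duality argument via \eqref{e2.2} supplies a rigorous justification, for general $u\in BV(\Omega)$, of the subadditivity step that the paper writes down without comment.
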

\begin{proof}
Obviously, the left-hand side of \eqref{e5.7} is greater than or equal to its right-hand side. It therefore suffices to prove the reverse inequality.

Let $u\in BV(\Omega)$ and let $\varepsilon>0$. It follows from proposition \ref{proposition5.1} that there exists a function $w\in W^{1,1}(\Omega)$ with $w=\varphi-u$ on $\partial\Omega$ and
\[\int_{\Omega}F(Dw,0) \leq(1+\varepsilon)\int_{\partial\Omega}F(\nu,0)|u-\varphi|d\mathcal{H}^{n-1}.\]
The function $v=u+w$ belongs to $BV(\Omega)$ with $v=\varphi$ on $\partial\Omega$. Moreover,
\[\begin{split}
\int_{\Omega}F(Dv,-1) \leq&\int_{\Omega}[F(Du,-1)+F(Dw,0)] \\
\leq& \int_{\Omega}F(Du,-1) +(1+\varepsilon)\int_{\partial\Omega}F(\nu,0)|u-\varphi|d\mathcal{H}^{n-1}.
\end{split}\]
Letting $\varepsilon\to0$, \eqref{e5.7} follows at once.
\end{proof}




\begin{remark}\label{remark5.1}The above result suggests the following weaker form of the Dirichlet problem for anisotropic non-parametric minimal surfaces:
Given a function $\varphi$ in $L^1(\partial\Omega)$, find a function $u\in BV(\mathcal{B})$ minimizing the functional
\begin{equation}\label{e5.9}
\mathcal{E}_{F}(v,\Omega)=\int_{\Omega}F(Dv,-1) +\int_{\partial\Omega}F(\nu,0)|v-\varphi|d\mathcal{H}^{n-1}
\end{equation}
among all functions $v\in BV(\Omega)$.

 If $\mathcal{B}$ is a ball containing $\overline{\Omega}$, we can use proposition \ref{proposition5.1} to  extend $\varphi$ to a $W^{1,1}$ function in $\mathcal{B}\setminus\overline{\Omega}$ still denoted by $\varphi$.  If we set for $v\in BV( \Omega)$,
\[v_{\varphi}=\begin{cases}
v(x)&\text{for }x\in\Omega\\
\varphi(x)&\text{for }x\in\mathcal{B}\setminus\Omega.
\end{cases}\]
 Then, the function $v_{\varphi}\in BV(\mathcal{B})$ and
\[\begin{split}
\int_{\mathcal{B}}F(Dv_{\varphi},-1) =&\int_{\Omega}F(Dv,-1) +\int_{\mathcal{B}\setminus\overline{\Omega}}F(D\varphi,-1) +\int_{\partial\Omega}F(\nu,0)|v-\varphi|d\mathcal{H}^{n-1}\\
=&\mathcal{E}_{F}(v,\Omega)+\int_{\mathcal{B}\setminus\overline{\Omega}}F(D\varphi,-1) .
\end{split}\]
We have therefore an equivalent formulation of the anisotropic Dirichlet problem: Given a function $\varphi\in W^{1,1}(\mathcal{B}\setminus\overline{\Omega})$, find a function $u\in BV(\mathcal{B})$, coinciding with $\varphi$ in $\mathcal{B}\setminus\overline{\Omega}$ and minimizing the anisotropic area $\mathcal{A}_F(v,\mathcal{B})$ among all functions $v\in BV(\mathcal{B})$ with $v=\varphi$ in $\mathcal{B}\setminus\overline{\Omega}$.
\end{remark}

\medskip

Since the set of functions uniformly bounded in $BV$-norm is relatively compact in $L^1(\Omega)$ and the anisotropic area $\mathcal{A}_F$ is lower semicontinuous, we have:
\begin{proposition}\label{proposition5.3}
Let $\Omega$ be a bounded open set with Lipschitz  boundary $\partial\Omega$, and let $\varphi$ be a function in $L^1(\partial\Omega)$. Then the functional $\mathcal{E}_{F}(u,\Omega)$ attains its minimum in $BV(\Omega)$.
\end{proposition}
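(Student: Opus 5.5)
We prove Proposition \ref{proposition5.3} by the direct method, using the extension formulation of Remark \ref{remark5.1}, which turns the boundary penalty into part of an anisotropic area on a larger set.

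\emph{Setup.} Fix a ball $\mathcal{B}\supset\overline{\Omega}$. By Proposition \ref{proposition5.1}, applied to $\mathcal{B}\setminus\overline{\Omega}$ whose boundary contains $\partial\Omega$, extend $\varphi$ to a $W^{1,1}(\mathcal{B}\setminus\overline{\Omega})$ function. For $v\in BV(\Omega)$ let $v_\varphi$ be the glued function. By Remark \ref{remark5.1},
\[\mathcal{A}_F(v_\varphi,\mathcal{B})=\mathcal{E}_F(v,\Omega)+\int_{\mathcal{B}\setminus\overline{\Omega}}F(D\varphi,-1),\]
where the last term is a finite constant. So it suffices to minimize $\mathcal{A}_F(\cdot,\mathcal{B})$ over the class $\mathcal{K}=\{w\in BV(\mathcal{B}): w=\varphi \text{ on } \mathcal{B}\setminus\overline{\Omega}\}$. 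Here the jump across $\partial\Omega$ is computed via the BV trace theorem, valid for Lipschitz $\partial\Omega$; the normal-weighted jump term $F(\nu,0)|v-\varphi|$ is exactly the singular part of $F(Dv_\varphi,-1)$ on $\partial\Omega$.

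\emph{Step 1: bounded minimizing sequence.} Let $w_j\in\mathcal{K}$ be a minimizing sequence. The infimum is finite: for instance $v=0$ gives $\mathcal{E}_F(0,\Omega)\le \beta|\Omega|+\beta\|\varphi\|_{L^1(\partial\Omega)}$. By \eqref{e2.1}, $\int_{\mathcal{B}}|Dw_j|\le \alpha^{-1}\mathcal{A}_F(w_j,\mathcal{B})$, so total variations are uniformly bounded.

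We also need an $L^1$ bound. Since $w_j=\varphi$ on the set $\mathcal{B}\setminus\overline{\Omega}$, which has positive measure, a Poincaré inequality on the connected ball $\mathcal{B}$ for functions vanishing on a set of positive measure gives
\[\|w_j-\varphi_{ext}\|_{L^1(\mathcal{B})}\le C\,|D(w_j-\varphi_{ext})|(\mathcal{B}),\]
where $\varphi_{ext}\in W^{1,1}(\mathcal{B})$ is a fixed extension of $\varphi$ into $\Omega$ as well. Such an extension exists by Proposition \ref{proposition5.1} applied from inside $\Omega$ and glued; it has no jump because the traces coincide. Hence $\{w_j\}$ is bounded in $BV(\mathcal{B})$.

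\emph{Step 2: compactness and lower semicontinuity.} By BV compactness, a subsequence converges in $L^1(\mathcal{B})$ to some $w\in BV(\mathcal{B})$, and clearly $w=\varphi$ a.e. on $\mathcal{B}\setminus\overline{\Omega}$, so $w\in\mathcal{K}$. Lemma \ref{lemma2.1}, applied on $\mathcal{B}$, gives $\mathcal{A}_F(w,\mathcal{B})\le\liminf_j\mathcal{A}_F(w_j,\mathcal{B})$. Thus $w$ is a minimizer, and $u=w|_{\Omega}$ minimizes $\mathcal{E}_F(\cdot,\Omega)$.

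\emph{Main obstacle.} The delicate point is that the boundary term is not lower semicontinuous on its own, since traces are not continuous under $L^1$ convergence. The extension trick handles this by absorbing the term into the area on $\mathcal{B}$. The remaining check is the identity of Remark \ref{remark5.1} for general $v\in BV(\Omega)$. This follows from the decomposition $Dv_\varphi=Dv\llcorner\Omega+D\varphi\llcorner(\mathcal{B}\setminus\overline{\Omega})+(\varphi-v)\nu\,\mathcal{H}^{n-1}\llcorner\partial\Omega$, together with the 1-homogeneity of $F$: on the singular part, where the $x_{n+1}$-component carries no mass, the integrand is $F(\nu,0)|v-\varphi|$.
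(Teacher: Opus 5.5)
Your proof is correct and follows essentially the paper's route: the paper derives Proposition~\ref{proposition5.3} in one line from BV compactness and the lower semicontinuity of $\mathcal{A}_F$, implicitly through the extension formulation of Remark~\ref{remark5.1}, and you supply the details it omits (the $L^1$ bound via a Poincar\'e inequality on $\mathcal{B}$, the identification of the jump term on $\partial\Omega$, and the passage from the minimizer on $\mathcal{B}$ back to $\Omega$). The only quibble, which the paper shares, is that Proposition~\ref{proposition5.1} is stated for $C^1$ boundaries, so for Lipschitz $\partial\Omega$ the $W^{1,1}$ extensions of $\varphi$ should instead be justified by Gagliardo's trace theorem; this is enough because no sharp constant is needed for existence.
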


\medskip

We shall prove that any function $u$ minimizing $\mathcal{E}_F$ is regular in the following.

\begin{lemma}\label{lemma5.2}
Let $u\in BV(\Omega)$ and let
\[U=\{(x,t)\in\Omega\times\mathbb{R}: t<u(x)\}\]
be the subgraph of $u$. Then
\begin{equation}\label{e5.8}
\int_{\Omega}F(Du,-1) =\int_{\Omega\times\mathbb{R}}|D\chi_{U}|_F .
\end{equation}
\end{lemma}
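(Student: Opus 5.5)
We prove the two inequalities in \eqref{e5.8} separately, following the isotropic argument (Giusti) and using the dual formulations of both sides. The left side is $\mathcal{A}_F(u,\Omega)$ as defined in \eqref{e2.2}, a supremum over fields $g=(g',g_{n+1})\in C_0^1(\Omega,\mathbb{R}^{n+1})$ with $F^o(g)\le1$. The right side is $P_F(U,\Omega\times\mathbb{R})$, a supremum over $\sigma\in C_0^1(\Omega\times\mathbb{R},\mathbb{R}^{n+1})$ with $F^o(\sigma)\le1$ of $\int_U\operatorname{div}\sigma$.

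\emph{Step 1 (the $\le$ inequality).} Fix an admissible $g$ and a cutoff $\eta_T(t)\in C_0^1(\mathbb{R})$ with $0\le\eta_T\le1$, $\eta_T=1$ on $[-T,T]$, $\eta_T=0$ outside $[-T-1,T+1]$ and $|\eta_T'|\le 2$. Set $\sigma(x,t)=\eta_T(t)g(x)$, which satisfies $F^o(\sigma)\le1$.

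For smooth $u$, Fubini gives
\[\int_U\operatorname{div}\sigma=\int_\Omega\int_{-\infty}^{u(x)}\bigl(\eta_T\operatorname{div}g'+\eta_T' g_{n+1}\bigr)\,dt\,dx.\]
Both integrands vanish below $-T-1$. Where $|u|\le T$, the inner integral equals $\int_\Omega\bigl(u\operatorname{div}g'+g_{n+1}\bigr)dx$ up to an additive constant $\int_\Omega c_T\operatorname{div}g'\,dx$, which vanishes since $g'$ has compact support. The remaining error is controlled by $\int_{\{|u|>T\}}(|u|+1)|Dg|$ and tends to $0$ as $T\to\infty$ because $u\in L^1$. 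Hence $\int_\Omega(g_{n+1}+u\operatorname{div}g')\le P_F(U,\Omega\times\mathbb{R})$, and taking the supremum over $g$ gives the inequality. For $u\in BV$ the same computation works directly: the $x$-integrand is an explicit function of $u(x)$, so no smoothness is needed.

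\emph{Step 2 (the $\ge$ inequality).} First let $u\in C^1(\Omega)$. Then $\partial U$ is the graph $\{t=u(x)\}$ with outer normal $(-Du,1)/\sqrt{1+|Du|^2}$. By the identity $P_F(E,G)=\int_{\partial^*E\cap G}F(\nu)\,d\mathcal{H}^n$ recalled in Section 2, the area formula and the evenness and $1$-homogeneity of $F$ give
\[P_F(U,\Omega\times\mathbb{R})=\int_\Omega F(-Du,1)\,dx=\int_\Omega F(Du,-1)\,dx.\]
For general $u\in BV(\Omega)$, take the standard approximation $u_j\in C^\infty(\Omega)$ with $u_j\to u$ in $L^1(\Omega)$ and $\int_\Omega F(Du_j,-1)\to\mathcal{A}_F(u,\Omega)$. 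Then the subgraphs $U_j\to U$ in $L^1_{loc}(\Omega\times\mathbb{R})$, since $\|\chi_{U_j}-\chi_U\|_{L^1}=\|u_j-u\|_{L^1}$. Lower semicontinuity of $P_F$, whose proof is the same as that of Lemma \ref{lemma2.1}, then gives
\[P_F(U)\le\liminf_j P_F(U_j)=\lim_j\mathcal{A}_F(u_j,\Omega)=\mathcal{A}_F(u,\Omega).\]

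\emph{Main obstacle.} The key step is the strict-convergence approximation in the anisotropic setting: mollification with a partition of unity (Anzellotti–Giaquinta / Meyers–Serrin type) should give $\mathcal{A}_F(u_j)\to\mathcal{A}_F(u)$. One route is the Reshetnyak continuity theorem applied to the measures $(Du_j,-\mathcal{L}^n)$, which converge strictly in total variation to $(Du,-\mathcal{L}^n)$, since $F$ is continuous, convex and $1$-homogeneous. Alternatively, Jensen's inequality for the convex functional gives $\limsup_j\mathcal{A}_F(u_j)\le\mathcal{A}_F(u)$ on mollified pieces, with the partition-of-unity error terms vanishing as in the isotropic proof. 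The reverse inequality $\liminf_j\mathcal{A}_F(u_j)\ge\mathcal{A}_F(u)$ is Lemma \ref{lemma2.1}.
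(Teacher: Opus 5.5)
Your proposal is correct and follows essentially the same route as the paper. A product test field $g(x)$ times a cutoff in $t$ gives $\mathcal{A}_F(u,\Omega)\le P_F(U,\Omega\times\mathbb{R})$, and smooth approximation with $\int_\Omega F(Du_j,-1)\to\mathcal{A}_F(u,\Omega)$ plus lower semicontinuity of $P_F$ gives the reverse inequality. The differences are minor. You absorb unbounded $u$ directly through the error term $\int_{\{|u|>T\}}(|u|+1)|Dg|$, whereas the paper first treats bounded $u\ge 1$ (with $\sigma\equiv 1$ on $[1,\sup u]$) and then truncates. You also explicitly justify (via Reshetnyak continuity) the anisotropic strict approximation that the paper simply assumes.
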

\begin{proof}
Suppose that $u$ is bounded and $u\geq1$. Let $g_1(x),\dots,g_{n+1}(x)$ be functions with compact support in $\Omega$ with $F^o(g)\leq1$ in $\Omega$. Let $\sigma(t)$ support in $[0,\sup_{\Omega}u+1]$ such that $|\sigma|\leq1$  and $\sigma\equiv1$ in $[1,\sup_{\Omega}u]$. Set $\gamma(x,t)=g(x)\sigma(t)$ with $g=(g_1,\dots,g_{n+1})$, we have $F^o(\gamma(x,t))=|\sigma(t)|F^o(g(x))\leq 1$ in $\Omega\times\mathbb{R}$. Therefore,
\[\begin{split}
\int_{\Omega\times\mathbb{R}}|D\chi_{U}|_F  \geq&\int_{U}\operatorname{div}\gamma   =\int_{U}\sum_{i=1}^{n+1} D_i\gamma_i   \\
=&\int_{\Omega}dx\int_{0}^{u(x)}\left[g_{n+1}(x)\sigma'(t)+\sigma(t)\sum_{i=1}^{n}D_i g_i(x)\right]dt.
\end{split}\]
Since
\[\int_{0}^{u(x)}\sigma'(t)dt=\sigma(u(x))-\sigma(0)=1\]
and
\[\int_0^{u(x)}\sigma(t)dt=\int_1^{u(x)}\sigma(t)dt+\int_0^1\sigma(t)dt=u(x)-c\]
with $c=1-\int_0^1\sigma(t)dt\geq0$, we have
\[\int_{\Omega\times\mathbb{R}}|D\chi_{U}|_Fdxdt\geq\int_{\Omega}g_{n+1}dx+\int_{\Omega}\operatorname{div}g (u(x)-c)dx=\int_{\Omega}[g_{n+1}+u\operatorname{div}g]dx,\]
which implies that
\[\int_{\Omega\times\mathbb{R}}|D\chi_{U}|_F\geq\int_{\Omega}F(Du,-1) .\]

On the other hand, let $u_j\in C^{\infty}(\Omega)$, $u_j\to u$ in $L^1(\Omega)$ and $\int_{\Omega}F(Du_j,-1) \to\int_{\Omega}F(Du,-1) $. We have $U_j\to U$ in $L_{loc}^1(\Omega\times\mathbb{R})$ and thus
\[\int_{\Omega\times\mathbb{R}}|D\chi_{U}|_F \leq\liminf_{j\to\infty}\int_{\Omega\times\mathbb{R}}|D\chi_{U_j}|_F =\lim_{j\to\infty}\int_{\Omega}F(Du_j,-1) =\int_{\Omega}F(Du,-1) .\]
Therefore, \eqref{e5.8} holds for any bounded $u\in BV(\Omega)$.

Finally, if $u$ is unbounded, writing
\[u_T(x)=\begin{cases}
T&\text{if } u\geq T\\
u(x)&\text{if }|u|<T\\
-T&\text{if }u\leq -T
\end{cases}\]
and letting $T\to\infty$, we get the desired result.

\end{proof}

\medskip

Now, we show that given a set $V$, we can decrease its anisotropic perimeter by replacing it with a suitable subgraph.
\begin{lemma}\label{lemma5.3}
Let $V\subset\Omega\times\mathbb{R}:=Q$ be a measurable set, and let
\[\Omega\times(-\infty,-T)\subset V\subset\Omega\times(-\infty,T)\]
for some $T>0$. For $x\in\Omega$, let
\[w(x)=\lim_{k\to\infty}\left(\int_{-k}^k \chi_{V}(x,t)dt-k\right).\]
Then,
\begin{equation}\label{e5.10}
\int_{\Omega}F(Dw,-1) \leq\int_{\Omega\times\mathbb{R}}|D\chi_{V}|_F  .
\end{equation}
\end{lemma}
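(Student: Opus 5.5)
We follow the classical Giusti/Miranda argument, adapted to the anisotropic total variation. First observe that $w$ is well defined: for $k\ge T$ the integrand $\chi_V(x,t)$ equals $1$ for $t<-T$ and $0$ for $t>T$, so $\int_{-k}^k\chi_V(x,t)\,dt-k=\int_{-T}^{T}\chi_V(x,t)\,dt-T$ is independent of $k$. Hence $|w|\le T$ and $w\in L^1(\Omega)$ (for bounded $\Omega$). The quantity to estimate is $\mathcal{A}_F(w,\Omega)$ in the sense of \eqref{e2.2}, that is, a supremum over $g=(g',g_{n+1})\in C_0^1(\Omega,\mathbb{R}^{n+1})$ with $F^o(g)\le1$ of $\int_\Omega (g_{n+1}+w\,\operatorname{div}g')\,dx$. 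So it suffices to show, for each fixed such $g$, that
\[
\int_\Omega (g_{n+1}+w\,\operatorname{div}g')\,dx\le\int_{Q}|D\chi_V|_F .
\]

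As in the proof of Lemma \ref{lemma5.2}, we lift $g$ to a test field on $Q$. Choose a cutoff $\sigma\in C_0^1(\mathbb{R})$ with $0\le\sigma\le1$, $\sigma\equiv1$ on $[-T,T]$, supported in $[-T-1,T+1]$, and set $\gamma(x,t)=g(x)\sigma(t)$. By homogeneity of $F^o$ we have $F^o(\gamma)\le1$, so $\int_V\operatorname{div}\gamma\le\int_Q|D\chi_V|_F$. We then compute
\[
\int_V\operatorname{div}\gamma=\int_\Omega\operatorname{div}g'(x)\int_{\mathbb{R}}\chi_V\sigma\,dt\,dx+\int_\Omega g_{n+1}(x)\int_{\mathbb{R}}\chi_V\sigma'\,dt\,dx .
\]
Since $\chi_V=1$ for $t<-T$ and $\chi_V=0$ for $t>T$, while $\sigma'$ vanishes on $[-T,T]$, the second inner integral equals $\int_{-\infty}^{-T}\sigma'\,dt=\sigma(-T)=1$. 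The first inner integral is $\int_{-T}^T\chi_V\,dt+\int_{-T-1}^{-T}\sigma\,dt=w(x)+T+c$, where $c$ is a constant. Because $g'$ has compact support, $\int_\Omega\operatorname{div}g'\,(T+c)\,dx=0$, and we obtain exactly $\int_\Omega(g_{n+1}+w\,\operatorname{div}g')\,dx$. Taking the supremum over $g$ gives \eqref{e5.10}.

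The main obstacle is justifying the use of Fubini and the measurability of $w$. Both follow because $\chi_V\in L^\infty$ and everything is localized to $\Omega\times[-T-1,T+1]$. One must also check that the convention for the sign of the last component matches \eqref{e2.2}. If a sign mismatch appears, we instead use $\sigma$ with the opposite orientation (or apply the evenness of $F$ and $F^o$, replacing $g$ by $-g$), which leaves $F^o(g)\le1$ unchanged. No approximation by smooth sets is needed, since the inequality is obtained directly from the dual definitions.
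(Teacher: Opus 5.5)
Your proof is correct and follows essentially the same route as the paper. Both arguments lift an admissible field $g$ to $\gamma(x,t)=g(x)\sigma(t)$ using a cutoff $\sigma\equiv1$ on $[-T,T]$, compute $\int_{\mathbb{R}}\chi_V\sigma'\,dt=1$ and $\int_{\mathbb{R}}\chi_V\sigma\,dt=w+\text{const}$, and then take the supremum over $g$; the paper does this with its cutoff $\eta$. Your caveat about a possible sign mismatch is unnecessary, because your computation already yields exactly the integrand $g_{n+1}+w\operatorname{div}g'$ of \eqref{e2.2}.
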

\begin{proof}
Clearly,
\[\partial V\cap Q\subset\Omega\times(-T,T).\]
Setting
\[w_k=\int_{-k}^k \chi_{V}(x,t)dt-k,\]
and notice that $w_k=w_l$ for $k,l\geq T$, we have that $w(x)$ is a bounded measurable function and $-T\leq w(x)\leq T$.

Let $g(x)=(g_1(x),\dots,g_{n+1}(x))\in C_c^1(\Omega;\mathbb{R}^{n+1})$ with $F^o(g)\leq 1$, and let $\eta(t)$ be a smooth function such that $0\leq\eta\leq 1$ and
\[\eta(t)=\begin{cases}
0&\text{if }|t|\geq T+1\\
1&\text{if }|t|\leq T.
\end{cases}\]
We have
\[\int_{-\infty}^{\infty}\eta'(t)\chi_{V}(x,t)dt=\int_{-T-1}^{-T}\eta'(t)dt=\eta(-T)-\eta(-T-1)=1\]
and
\[\int_{-\infty}^{\infty}\eta(t)\chi_{V}(x,t)dt=\int_{-T}^{T}\chi_{V}(x,t)dt+\int_{-T-1}^{-T}\eta(t)dt=w+\alpha,\]
where $\alpha=T+\int_{-T-1}^{-T}\eta(t) dt\geq0$.

Therefore,
\[\begin{split}\int_{Q}|D\chi_{V}|_F \geq&\int_{Q}\chi_{V}(x,t)\sum_{i=1}^{n+1}\frac{\partial}{\partial x_i}\left(\eta(t)g_i(x)\right)dxdt\\
=&\int_{\Omega}g_{n+1}+(w +\alpha)\sum_{i=1}^{n}\frac{\partial g_i}{\partial x_i} dx=\int_{\Omega}(g_{n+1}+w\operatorname{div}g)dx.
\end{split}\]
Taking the supremum over $g$, we then get the conclusion.

\end{proof}

\medskip
In fact, the restriction that $\partial V\cap Q$ is bounded can be removed.
\begin{lemma}\label{lemma5.4}
Let $V\subset Q$ be a measurable set  in $Q$, and assume that
\begin{itemize}\item[(i)]for almost every $x\in\Omega$,
\[\lim_{t\to+\infty}\chi_{V}(x,t)=0\quad\text{and}\quad \lim_{t\to-\infty}\chi_{V}(x,t)=1;\]
\item[(ii)]  The symmetric difference $(V\setminus Q^-)\cup(Q^-\setminus V)$ has finite measure, where $Q^- = \Omega \times (-\infty, 0)$.
\end{itemize}
Then,  the function
\[w(x)=\lim_{k\to\infty}\left(\int_{-k}^k \chi_{V}(x,t)dt-k\right)\]
belongs to $L^1(\Omega)$ and
\[\int_{\Omega}F(Dw,-1) \leq\int_{Q}|D\chi_{V}|_F .\]
\end{lemma}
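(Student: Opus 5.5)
The plan is to reduce Lemma \ref{lemma5.4} to Lemma \ref{lemma5.3} by truncation in the vertical variable, and then pass to the limit using the lower semicontinuity of Lemma \ref{lemma2.1}. We may assume $\int_Q|D\chi_V|_F<\infty$, otherwise there is nothing to prove.

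First we check that $w\in L^1(\Omega)$. For a.e. $x$, write
\[\int_{-k}^k\chi_V(x,t)\,dt-k=\int_0^k\chi_V(x,t)\,dt-\int_{-k}^0\bigl(1-\chi_V(x,t)\bigr)\,dt.\]
Both integrands are nonnegative, so the two terms are monotone in $k$. By (ii) and Fubini, for a.e. $x$ both limits are finite, and
\[\int_\Omega|w|\,dx\le |V\setminus Q^-|+|Q^-\setminus V|<\infty.\]
So $w$ is well defined a.e. and lies in $L^1(\Omega)$.

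Next, for $T>0$ we truncate:
\[V_T=\bigl(V\cap(\Omega\times(-T,T))\bigr)\cup\bigl(\Omega\times(-\infty,-T]\bigr).\]
This set satisfies the hypotheses of Lemma \ref{lemma5.3}. Its associated function is
\[w_T(x)=\int_0^T\chi_V\,dt-\int_{-T}^0(1-\chi_V)\,dt,\]
and $w_T\to w$ in $L^1(\Omega)$ as $T\to\infty$ by monotone convergence on each piece. Lemma \ref{lemma5.3} gives
\[\int_\Omega F(Dw_T,-1)\le\int_Q|D\chi_{V_T}|_F.\]
By Lemma \ref{lemma2.1},
\[\int_\Omega F(Dw,-1)\le\liminf_{T\to\infty}\int_Q|D\chi_{V_T}|_F.\]

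The main obstacle is controlling $\int_Q|D\chi_{V_T}|_F$ by $\int_Q|D\chi_V|_F$. Cutting at heights $\pm T$ creates new boundary pieces on $\Omega\times\{T\}$ and $\Omega\times\{-T\}$. Their $F$-perimeter is at most
\[F(e_{n+1})\Bigl(\int_\Omega\chi_V(x,T^-)\,dx+\int_\Omega\bigl(1-\chi_V(x,-T^+)\bigr)\,dx\Bigr)\]
in the sense of traces. Meanwhile the perimeter of $V_T$ inside $\Omega\times(-T,T)$ is bounded by $\int_Q|D\chi_V|_F$. To make the new pieces negligible, we do not fix $T$ arbitrarily. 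Since
\[\int_0^\infty\int_\Omega\chi_V\,dx\,dt=|V\setminus Q^-|<\infty\quad\text{and}\quad\int_{-\infty}^0\int_\Omega(1-\chi_V)\,dx\,dt=|Q^-\setminus V|<\infty,\]
the slice integrals $m_+(t)=\int_\Omega\chi_V(x,t)\,dx$ and $m_-(t)=\int_\Omega(1-\chi_V(x,-t))\,dx$ are integrable on $(0,\infty)$. Hence there is a sequence $T_j\to\infty$ with $m_+(T_j)+m_-(T_j)\to0$. The BV slicing theory (the traces of $\chi_V$ on a.e. horizontal hyperplane agree with the slices) justifies using these slice values for the extra perimeter. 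Along this sequence,
\[\int_Q|D\chi_{V_{T_j}}|_F\le\int_Q|D\chi_V|_F+F(e_{n+1})\bigl(m_+(T_j)+m_-(T_j)\bigr)\to\int_Q|D\chi_V|_F,\]
which yields the claim. If the trace justification proves delicate, a fallback is to average the cut height over $T\in[k,2k]$ instead of choosing a sequence; this gives the same vanishing error by Fubini.
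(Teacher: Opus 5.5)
Your proposal is correct and is essentially the paper's proof: the same truncation $V_T$ (the paper's $V_k$), Lemma~\ref{lemma5.3}, the bound by $F(e_{n+1})$ times the slice masses for the extra perimeter created on $\Omega\times\{\pm T\}$, and lower semicontinuity via Lemma~\ref{lemma2.1}. The only difference is how the cut terms are made to vanish: the paper invokes (i) along all integers $k$, whereas you use the integrability from (ii) to select heights $T_j$ where traces agree with slices and $m_+(T_j)+m_-(T_j)\to 0$, which is slightly more careful and shows that (i) is not actually needed.
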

\begin{proof}
It follows from (i) that the sequence $w_k$ converges to $w$ almost everywhere in $\Omega$.
To establish the $L^1(\Omega)$ convergence, we define the dominating function
\[H(x) = \int_{-\infty}^\infty |\chi_V(x,t) - \chi_{Q^-}(x,t)| dt.\]
 Since the set $(V\setminus Q^-)\cup(Q^-\setminus V)$ has finite measure, we have $H \in L^1(\Omega)$. For any $k > 0$, we observe that
\[|w_k(x)| = \left| \int_0^k \chi_V(x,t) dt - \int_{-k}^0 (1-\chi_V(x,t)) dt \right| \leq H(x) \quad \text{for a.e. } x \in \Omega.\]
By Lebesgue's Dominated Convergence Theorem, it follows that $w \in L^1(\Omega)$ and $w_k \to w$ in $L^1(\Omega)$ as $k \to \infty$.

Now, let \[V_k=V\cup\left(\Omega\times(-\infty,-k)\right)\setminus\left(\Omega\times(k,+\infty)\right),\] then $\Omega\times(-\infty,-k)\subset V_k\subset \Omega\times(-\infty,k)$ and from lemma \ref{lemma5.3}, we get
\[\begin{split}\int_{\Omega}F(Dw_k,-1) \leq\int_{\Omega\times\mathbb{R}}|D\chi_{V_k}|_F \leq&\int_{\Omega\times\mathbb{R}}|D\chi_{V}|_F \\
&+F(0',1)\left[\int_{\Omega }\chi_{V}(x,k)dx+\int_{\Omega }(1-\chi_{V}(x,-k))dx \right].\end{split}\]
Then, the conclusion follows from (i) and the lower semicontinuity of the anisotropic area.

\end{proof}

\begin{lemma}\label{lemma5.5}
Let $u\in BV_{loc}(\Omega)$ be a local minimum of the anisotropic area functional $\mathcal{A}_{F}$. Then, the set
\[U=\{(x,t)\in\Omega\times\mathbb{R}:t<u(x)\}\]
minimizes locally the perimeter in $Q=\Omega\times\mathbb{R}$.
\end{lemma}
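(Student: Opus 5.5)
We follow the classical Miranda argument and compare the subgraph $U$ with an arbitrary competitor $V$. Fix an open set $A\Subset Q$ and a measurable $V\subset Q$ with $V\,\Delta\, U\Subset A$. We must show
\[\int_A|D\chi_U|_F\le\int_A|D\chi_V|_F.\]
Choose an open $\Omega'\Subset\Omega$ and $T>0$ such that $A\subset\Omega'\times(-T,T)$. Because $u$ is a local minimizer in $BV_{loc}$, it minimizes $\mathcal A_F(\cdot,\Omega')$ among functions $w$ with $w=u$ outside a compact subset of $\Omega'$.

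\textbf{Step 1: associate a function to $V$.} Set
\[w(x)=\lim_{k\to\infty}\Big(\int_{-k}^k\chi_V(x,t)\,dt-k\Big).\]
Since $V$ coincides with $U$ outside a compact subset $K$ of $A$, we have $w=u$ outside the projection of $K$, which is a compact subset of $\Omega'$. To apply Lemma \ref{lemma5.4} on $\Omega'\times\mathbb{R}$, replace $V$ by $V\cap\big(\Omega'\times\mathbb{R}\big)$.

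\textbf{Step 2: check the hypotheses of Lemma \ref{lemma5.4}.} Condition (i) holds because $V=U$ for $|t|$ large, and $u\in L^1_{loc}$ is finite a.e. For condition (ii), $U\Delta Q^-$ restricted to $\Omega'$ has measure $\int_{\Omega'}|u|<\infty$, and $V\Delta U$ is bounded, so $V\Delta Q^-$ also has finite measure. If $u$ is only $L^1_{loc}$, we first shrink to $\Omega'\Subset\Omega$, where $u\in BV(\Omega')\subset L^1(\Omega')$.

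\textbf{Step 3: compare.} Lemma \ref{lemma5.4} gives
\[\mathcal A_F(w,\Omega')\le\int_{\Omega'\times\mathbb R}|D\chi_V|_F.\]
Minimality of $u$ gives $\mathcal A_F(u,\Omega')\le\mathcal A_F(w,\Omega')$, and Lemma \ref{lemma5.2} gives
\[\mathcal A_F(u,\Omega')=\int_{\Omega'\times\mathbb R}|D\chi_U|_F.\]
Combining these,
\[\int_{\Omega'\times\mathbb R}|D\chi_U|_F\le\int_{\Omega'\times\mathbb R}|D\chi_V|_F.\]
Since $\chi_U=\chi_V$ on $(\Omega'\times\mathbb R)\setminus K$, the parts of the two anisotropic perimeters outside $A$ agree. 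Subtracting them yields the inequality on $A$.

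\textbf{Main obstacle.} The delicate point is this subtraction step. One must know that $|D\chi_U|_F$ and $|D\chi_V|_F$ coincide as measures on the open set $(\Omega'\times\mathbb R)\setminus K$, and that both are finite. Finiteness of $|D\chi_U|_F$ on $\Omega'\times\mathbb R$ follows from Lemma \ref{lemma5.2}. For the comparison we use that the anisotropic total variation is a Radon measure, localized on open sets. We write the total perimeter as the sum of the contributions of $A$ and of $(\Omega'\times\mathbb R)\setminus \overline{K'}$ for a compact $K'$ with $K\subset \mathrm{int}\,K'\subset K'\subset A$. The boundary $\partial K'$ can be chosen to carry no mass for either measure, which is true for all but countably many level choices of a smooth exhaustion.
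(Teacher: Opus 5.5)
Your proof is correct and follows the paper's argument: you build $w$ from the competitor $V$, bound $\mathcal{A}_F(w)$ by the perimeter of $V$ via Lemma \ref{lemma5.4}, use the minimality of $u$, and identify $\mathcal{A}_F(u)$ with the perimeter of $U$ via Lemma \ref{lemma5.2}; the only difference is that the paper states the comparison directly on cylinders $A\times\mathbb{R}$ with $A\subset\subset\Omega$, whereas you additionally localize to an arbitrary open $A\subset\subset Q$. That localization needs no choice of $K'$ with massless boundary: $(\Omega'\times\mathbb{R})\setminus A$ is a Borel subset of the open set $(\Omega'\times\mathbb{R})\setminus K$, where $|D\chi_U|_F$ and $|D\chi_V|_F$ coincide and have finite mass, so you can subtract directly.
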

\begin{proof}
Let $A\subset\subset \Omega$ and let $V$ be a measurable set that has locally finite perimeter in $Q$, coinciding with $U$ outside some compact set $K\subset A\times\mathbb{R}$. It is obvious that $U$ and therefore $V$ satisfies the assumption in lemma \ref{lemma5.4}. The function $w$ coincides with $u$ outside $A$ and it follows from lemma \ref{lemma5.2} and lemma \ref{lemma5.4} that
\[\int_{A\times\mathbb{R}}|D\chi_{U}|_F  =\int_{A}F(Du,-1) \leq \int_{A}F(Dw,-1) \leq\int_{A\times\mathbb{R}}|D\chi_{V}|_F .\]
\end{proof}

\begin{lemma}\label{lemma5.7}
Let $u\in BV_{loc}(\Omega)$ minimize the anisotropic area functional $\mathcal{A}_F$, then $u$ is locally bounded in  $\Omega$.
\end{lemma}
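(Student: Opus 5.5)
We follow the classical Giusti argument for the isotropic case, transferred to the anisotropic perimeter. By Lemma \ref{lemma5.5}, the subgraph $U=\{(x,t)\in\Omega\times\mathbb{R}: t<u(x)\}$ locally minimizes the anisotropic perimeter $P_F(\cdot,Q)$ in $Q=\Omega\times\mathbb{R}$. It suffices to show that $u$ is locally bounded above; the bound from below follows by applying the same argument to $-u$, which minimizes the area for the integrand $\widetilde F(\xi',\xi_{n+1})=F(-\xi',\xi_{n+1})=F(\xi',-\xi_{n+1})$ (recall $F$ is even). This integrand satisfies the same hypotheses.

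\textbf{Step 1: density estimate.} The main tool is a lower density estimate for minimizers of $P_F$. If $E$ locally minimizes $P_F$ in $Q$, $z_0\in\partial E$ (in the measure-theoretic sense) and $B_r(z_0)\subset\subset Q$, then
\[
|E\cap B_r(z_0)|\geq c\,r^{n+1},
\]
with $c$ depending only on $n,\alpha,\beta$. To prove it, set $m(r)=|E\cap B_r(z_0)|$ and compare $E$ with $E\setminus B_r(z_0)$. Minimality together with \eqref{e2.1} gives
\[
\alpha P(E,B_r)\leq P_F(E,B_r)\leq P_F(B_r,E)\leq \beta\,\mathcal{H}^{n}(E\cap\partial B_r)=\beta m'(r)
\]
for a.e.\ $r$. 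The isoperimetric inequality then yields
\[
c\,m(r)^{n/(n+1)}\leq P(E\cap B_r)\leq (1+\beta/\alpha)\,m'(r).
\]
Integrating this differential inequality gives $m(r)\ge c r^{n+1}$. The same estimate holds for the complement.

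\textbf{Step 2: local bound from above.} Fix $x_0\in\Omega$ and $R$ with $B_{2R}(x_0)\subset\subset\Omega$. Since $u\in L^1(B_{2R}(x_0))$, for $T>0$ we have
\[
|U\cap (B_{2R}(x_0)\times(T,\infty))|\leq\int_{B_{2R}}(u-T)^+dx\to0 \quad\text{as } T\to\infty.
\]
Suppose $\operatorname{ess\,sup}_{B_R(x_0)}u=+\infty$ (or merely very large). Then for every $t$ there are points $x\in B_R(x_0)$ with $(x,t)$ in the closure of $U$ and outside the interior of its complement. After choosing the precise representative of $U$, there is a boundary point $z_0=(x,t)\in\partial U$ with $t\geq T+R$. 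By Step 1,
\[
|U\cap B_R(z_0)|\ge cR^{n+1},
\]
while $B_R(z_0)\subset B_{2R}(x_0)\times(T,\infty)$. This contradicts the smallness above once $T$ is chosen so that $\int_{B_{2R}}(u-T)^+<cR^{n+1}$. Hence $u\le T+R$ a.e.\ on $B_R(x_0)$, with $T$ depending on $\|u\|_{L^1(B_{2R})}$ and $R$. A covering argument gives the bound on compact subsets.

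\textbf{Main obstacle.} The delicate part is Step 1 in the anisotropic setting. The comparison set $E\setminus B_r$ must be admissible, meaning it coincides with $E$ outside a compact subset of $A\times\mathbb{R}$, which holds because $B_r\subset\subset Q$. In addition, the boundary term on $\partial B_r$ must be controlled through $F(\nu)\le\beta$. One also has to justify that a.e.\ slicing gives
\[
P_F(E\setminus B_r,\overline B_r)\le\beta\,\mathcal{H}^n(E\cap\partial B_r),
\]
which follows from the standard BV slicing formula applied to the representation $P_F=\int_{\partial^*E}F(\nu)\,d\mathcal{H}^n$. A secondary technical point is normalizing $U$ so that topological boundary points have positive density on both sides, which is done by the usual choice of representative.
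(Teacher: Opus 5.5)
Your argument is correct and is essentially the paper's. Both go through Lemma \ref{lemma5.5}, obtain a lower volume-density bound by comparing with $E\setminus B_r$ and applying an isoperimetric inequality, and derive a contradiction with $u\in L^1_{loc}$. The differences are minor:
\begin{itemize}
\item You use the Euclidean isoperimetric inequality together with $\alpha|\xi|\le F(\xi)\le\beta|\xi|$; the paper uses the Wulff inequality.
\item You test a single ball above height $T$ against the vanishing tail $\int_{B_{2R}}(u-T)^+$; the paper tests $M$ disjoint stacked balls against $\int u^+$.
\item You get the lower bound by passing to $-u$ with the reflected integrand; the paper passes to the complement $Q\setminus U$.
\end{itemize}

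Two small remarks. First, you do not need $z_0$ to be a measure-theoretic boundary point, and your sketch of how to find one is the loosest step. The differential inequality only needs $|U\cap B_r(z_0)|>0$ for all $r>0$. This already holds at $(x,t)$ when $x$ is a Lebesgue point of $u$ and $t<u(x)$, which is exactly how the paper chooses its centres.

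Second, the single-ball version is purely qualitative. Your $T$ depends on the tail of $u$, not on $\|u\|_{L^1(B_{2R})}$ alone, so that parenthetical claim is not justified. Stacking roughly $T/R$ disjoint balls, as the paper does, does give a bound in terms of $\|u^+\|_{L^1}$.
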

\begin{proof}

Assume by contradiction that there exists a compact set $K\subset\Omega$ such that $\operatorname{ess}\sup_{K}u=\infty$.
Let
$R_0 = \frac{1}{2}\operatorname{dist}(K,\partial\Omega)$. For any $M\in\mathbb{N}$, there exists a point $x\in K$  be a Lebesgue point of $u$ such that $u(x)>2MR_0$. It follows that the points $z_i=(x,2iR_0)$ for $i=0,1,\dots,M$ belong to $U$. For any $R \in (0, R_0)$, we denote by $\mathcal{B}_R(z_i)$ the ball in $\mathbb{R}^{n+1}$ centered at $z_i$ with radius $R$.

Since $U$ locally minimizes the anisotropic perimeter in $Q$,
 we obtain
\[\int_{\mathcal{B}_R(z_i)}|D\chi_{U}|_F \leq \int_{\partial \mathcal{B}_R(z_i)}F(\nu)\chi_{U} d\mathcal{H}^{n}\]
for each $i = 0, 1, \dots, M$, and for almost all $R \in (0, R_0)$. Moreover, for almost all $R \in (0, R_0)$, we have
\[\int_{Q}|D\chi_{U\cap \mathcal{B}_R(z_i)}|_F = \int_{\mathcal{B}_R(z_i)}|D\chi_{U}|_F + \int_{\partial \mathcal{B}_R(z_i)} F(\nu)\chi_{U} d\mathcal{H}^{n}.\]
Using the fact that$F(-\nu) \leq \beta$ and $F(\nu) \leq \beta$, and applying the coarea formula, we get
\begin{equation}\label{e5.14}
\int_{Q}|D\chi_{U\cap \mathcal{B}_R(z_i)}|_F \leq \int_{\partial \mathcal{B}_R(z_i)} (F(-\nu) + F(\nu))\chi_{U} d\mathcal{H}^{n} \leq 2\beta \frac{d}{dR}|U\cap \mathcal{B}_R(z_i)|.\end{equation}
According to the anisotropic isoperimetric inequality established in \cite{AFT}, we have
\[\int_{Q}|D\chi_{U\cap \mathcal{B}_R(z_i)}|_F \geq (n+1)\kappa_{n+1}^{1/(n+1)}|U\cap \mathcal{B}_R(z_i)|^{1-1/(n+1)},\]
where $\kappa_{n+1}$ is the volume of the unit Wulff ball centered at the origin.
Combining this with
\eqref{e5.14} yields
\[\frac{d}{dR}|U\cap \mathcal{B}_R(z_i)| \geq \frac{(n+1)\kappa_{n+1}^{1/(n+1)}}{2\beta}|U\cap \mathcal{B}_R(z_i)|^{1-1/(n+1)}.\]
Integrating this differential inequality with respect to $R$, we obtain
\[|U\cap \mathcal{B}_R(z_i)| \geq \left( \frac{\kappa_{n+1}^{1/(n+1)} R}{2\beta} \right)^{n+1} = \frac{\kappa_{n+1}}{(2\beta)^{n+1}} R^{n+1} := c(n) R^{n+1}.\]

Now, let $K_R=\{y\in \Omega : \operatorname{dist}(y,K)<R\}$. Since $R < R_0$, we have $K_R \subset\subset \Omega$ and the projection of $\mathcal{B}_R(z_i)$ onto $\Omega$ is contained in $K_R$. Since $z_i = (x, 2iR_0)$ and $R < R_0$, for any $i \geq 1$, the balls $\mathcal{B}_R(z_i)$ lie entirely in the upper half-space $\{t > 0\}$. Consequently, if $(y,t) \in U \cap \mathcal{B}_R(z_i)$, then $0 < t < u(y)$, which implies $u(y) > 0$. Using the disjointness of these balls, we obtain
\[\int_{K_R}|u|dy \geq \int_{K_R} u^+ dy \geq \sum_{i=1}^{M}|U\cap \mathcal{B}_R(z_i)| \geq c(n) M R^{n+1}.\]
Since $M$ is arbitrary, this implies $\int_{K_R}|u|dy = \infty$, which contradicts the fact that $u \in BV(K_R)$.

To exclude $\inf_K u = -\infty$, we apply the same density argument to the complement $Q \setminus U$. Since $F$ is even, $Q \setminus U$ also locally minimizes the anisotropic perimeter. This gives a local lower bound for $u$. Hence, $u$ is locally bounded in $\Omega$.

\end{proof}

\medskip

A second consequence of lemma \ref{lemma5.5} is that the boundary of $U$, i.e. $\partial U$ is a regular hypersurface outside a closed set $\Sigma$ with $\mathcal{H}^{n-2}(\Sigma)=0$ (see \cite{ASS}). Now, we show that the function $u$ is regular in the set
\[Reg:=\Omega\setminus\operatorname{proj}\Sigma.\]
To prove this, it is sufficient to show that $\nu_{n+1}<0$ on $\partial U\setminus\Sigma$.
Suppose on the contrary that $\nu_{n+1}(x_0)=0$ for some $x_0\in\partial U\setminus\Sigma$. By the implicit function theorem, we can write $x_0 = (x_{0}^1, y_0)$ and represent $\partial U$ as the graph of a smooth function in a neighborhood of $x_0$:
\[x_1=v(x_2,\dots, x_{n+1})\quad\text{with }\frac{\partial v}{\partial x_{n+1}}(y_0)=0\text{ and }\frac{\partial v}{\partial x_{n+1}}\leq0.\]
Then, the function $v$ is of course a solution of the anisotropic minimal surface equation
\[\sum_{i=2}^{n+1}\frac{\partial}{\partial x_i}\left(F_{\xi_i}(1,-Dv)\right)=0.\]
As in lemma \ref{lemma4.3},  $w=\frac{\partial v}{\partial x_{n+1}}\leq0$ is a solution of the uniformly elliptic equation
\[\sum_{i,j=2}^{n+1}\frac{\partial}{\partial x_i}\left(F_{\xi_i\xi_{j}}(1,-Dv)\frac{\partial w}{\partial x_{j}}\right)=0.\]
Therefore, we can apply the following strong maximum principle:
\medskip
\begin{lemma}\label{lemma5.8}(Strong maximum principle): Let $\overline{v}$ and $\underline{v}$ be a supersolution and a subsolution of
\[\operatorname{div}A(x,Du)=0\quad\text{in }\Omega\]
with \[\sigma |\zeta|^2\leq\sum_{i,j=1}^{n}\frac{\partial A_i}{\partial p_j}\zeta_i\zeta_j\leq \gamma |\zeta|^2\quad\text{for any }\zeta\in\mathbb{R}^n.\]
If $\overline{v}\geq \underline{v}$ on $\partial\Omega$ and $\overline{v}(x_0)=\underline{v}(x_0)$ for some $x_0\in\Omega$, then $\overline{v}=\underline{v}$.
\end{lemma}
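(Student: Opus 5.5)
The argument reduces Lemma \ref{lemma5.8} to a linear uniformly elliptic equation for the difference $z=\overline{v}-\underline{v}\ge 0$, and then applies the classical strong maximum principle for divergence-form operators (the weak Harnack inequality of De Giorgi--Nash--Moser, as in Lemma \ref{lemma4.4}).

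First I set up the linear inequality. Write
\[
A(x,D\overline{v})-A(x,D\underline{v})=a_{ij}(x)D_jz,\qquad a_{ij}(x)=\int_0^1\frac{\partial A_i}{\partial p_j}\bigl(x,D\underline{v}+s(D\overline{v}-D\underline{v})\bigr)\,ds .
\]
The structural hypothesis, applied pointwise inside the integral, gives
\[
\sigma|\zeta|^2\le a_{ij}\zeta_i\zeta_j\le\gamma|\zeta|^2 .
\]
So the coefficients are measurable, bounded and uniformly elliptic. The choice of $a_{ij}$ depends on the solutions only through measurability, and no regularity of $a_{ij}$ is needed. Since $\overline{v}$ is a weak supersolution and $\underline{v}$ a weak subsolution, for every nonnegative test function $\phi\in C_c^\infty(\Omega)$ we get
\[
\int_\Omega a_{ij}D_jz\,D_i\phi\,dx\ge 0,
\]
that is, $z$ is a nonnegative weak supersolution of $\partial_i(a_{ij}D_jz)=0$. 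I take the supersolution and subsolution to be weak solutions in $W^{1,2}_{loc}$, as in Lemma \ref{lemma4.4}. In the application that follows, $w$ is smooth and the comparison function is the constant $0$, so this is harmless. By the weak maximum principle (Corollary \ref{corollary3.1}, or its linear analogue), $z\ge 0$ holds in all of $\Omega$, not just on $\partial\Omega$.

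The main step is the weak Harnack inequality for nonnegative supersolutions, in the Trudinger / Gilbarg--Trudinger form: for $B_{4R}(y)\subset\Omega$ and some $p>0$,
\[
R^{-n/p}\|z\|_{L^p(B_{2R}(y))}\le C\inf_{B_R(y)}z .
\]
This estimate is the one genuinely hard ingredient, and I would cite it, just as the paper cites Harnack in Lemma \ref{lemma4.4}. Applied at $x_0$ with $\inf z=0$ (since $z(x_0)=0$ and $z\ge 0$), it forces $z=0$ a.e.\ on a ball around $x_0$.

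Finally I would close by connectedness. The set $\{x\in\Omega: z\equiv 0\text{ near }x\}$ is open by definition. It is also relatively closed: if $x$ is a limit point of this set, choose a ball $B_{4R}(y)\subset\Omega$ with $x\in B_R(y)$ that meets the zero set. Then $\inf_{B_R(y)}z=0$, and the weak Harnack inequality gives $z\equiv 0$ on $B_{2R}(y)\ni x$. Since $\Omega$ is connected, $z\equiv 0$, i.e.\ $\overline{v}=\underline{v}$.

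If one prefers to avoid the weak theory, in the smooth setting where the lemma is used one can instead invoke Hopf's strong maximum principle for the non-divergence form $a_{ij}D_{ij}z+b_iD_iz\le 0$.
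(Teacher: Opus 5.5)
The paper does not prove Lemma~\ref{lemma5.8}. It quotes it as a classical fact and uses it only for the linear equation satisfied by $w=\partial v/\partial x_{n+1}$, so there is no paper argument to compare against. Your argument is the standard proof and it is correct. You linearize the difference $z=\overline{v}-\underline{v}$ through $a_{ij}=\int_0^1\partial_{p_j}A_i\,ds$, which makes $z$ a nonnegative weak supersolution of a linear, uniformly elliptic, divergence-form equation. You then apply the weak Harnack inequality and close by connectedness. This is exactly how the strong maximum principle for divergence-form operators is derived in Gilbarg--Trudinger. The sign bookkeeping (supersolution minus subsolution gives $\int a_{ij}D_jz\,D_i\phi\ge 0$ for $\phi\ge 0$) and the open--closed argument are both fine.

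Some hypotheses you use silently should be stated, because the lemma as written does not contain them:
\begin{itemize}
\item $\Omega$ must be connected, otherwise the statement is false.
\item The hypothesis bounds only the quadratic form $\partial_{p_j}A_i\zeta_i\zeta_j$, that is, the symmetric part of $\partial_pA$. Your claim that the $a_{ij}$ are bounded, which the weak Harnack inequality needs, therefore requires either symmetry of $\partial_pA$ or a separate bound $|\partial_pA|\le\gamma$. Symmetry holds in the application, where the matrix is a Hessian of $F$.
\item The pointwise condition $\overline{v}(x_0)=\underline{v}(x_0)$ gives $\operatorname{ess\,inf}_{B_R(x_0)}z=0$ only if $z$ is continuous at $x_0$, or if you work with the lower semicontinuous representative of the supersolution $z$.
\item Corollary~\ref{corollary3.1} does not apply, since it concerns variational super/subsolutions of $\mathcal{A}_F$ in $L_k(\Omega)$. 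The inequality $z\ge 0$ in $\Omega$ should instead come from the linear weak maximum principle for $\partial_i(a_{ij}D_jz)$, with the boundary inequality read as $z^-\in W^{1,2}_0(\Omega)$.
\end{itemize}
None of these changes the structure of your proof. All of them hold in the paper's application: the domain can be taken to be a small ball, $w$ is smooth, and $w\le 0$ is known beforehand.
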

\medskip

Then, we conclude that $\frac{\partial v}{\partial x_{n+1}}=0$ and thus $\nu_{n+1}$ vanishes identically in a neighborhood $N$ of $x_0$. Setting $\Gamma=\operatorname{proj}N$, we have $\mathcal{H}^{n-1}(\Gamma)>0$.
For any $y \in \Gamma \setminus \operatorname{proj}\Sigma$, the vertical line $\{y\} \times \mathbb{R}$ does not intersect $\Sigma$, which forces $\{y\} \times \mathbb{R} \subset \partial U$. This contradicts the local boundedness of $u$. Thus, $\Gamma \subset \operatorname{proj}\Sigma$. Since the projection is Lipschitz, we obtain
\[
\mathcal{H}^{n-1}(\Sigma) \geq \mathcal{H}^{n-1}(\operatorname{proj}\Sigma) \geq \mathcal{H}^{n-1}(\Gamma) > 0,
\]
which contradicts $\mathcal{H}^{n-2}(\Sigma) = 0$. Hence, $\nu_{n+1} < 0$ on $\partial U \setminus \Sigma$.



\medskip

\begin{lemma}\label{lemma5.9}
Let $u\in BV_{loc}(\Omega)$ minimize the anisotropic area functional $\mathcal{A}_F$ in $\Omega$. Then, $u\in W_{loc}^{1,1}(\Omega)$.
\end{lemma}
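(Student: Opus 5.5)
We want to show that $Du$ has no singular part in $\Omega$. The natural route is the Giusti-style argument: show the singular part of $Du$ lives on a set that the regularity theory forces to be negligible.

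\emph{Step 1: smoothness away from $\operatorname{proj}\Sigma$.} By the discussion preceding the statement, $\partial U$ is a smooth hypersurface outside a closed set $\Sigma$ with $\mathcal{H}^{n-2}(\Sigma)=0$, and $\nu_{n+1}<0$ on $\partial U\setminus\Sigma$. Take $x\in Reg=\Omega\setminus\operatorname{proj}\Sigma$. The vertical line over $x$ meets $\partial U$, since $u$ is locally bounded by Lemma \ref{lemma5.7}. Near each intersection point, $\partial U$ is a smooth graph $t=\tilde u(y)$ with $|D\tilde u|<\infty$, because $\nu_{n+1}<0$ there.

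\emph{Step 2: a single sheet, so $u$ is smooth on $Reg$.} I must check that the line over $x$ meets $\partial U$ in only one point. Suppose there were two points $t_1<t_2$. Since $\partial U$ is transversal to the vertical direction at both, the subgraph structure (an a.e.\ vertical segment property of $U$) gives that $\chi_U(x,\cdot)$ switches from $1$ to $0$ at each crossing. This is impossible for a subgraph: it would need a switch from $0$ back to $1$ in between, at a point where $\nu_{n+1}>0$. Hence $u$ coincides a.e.\ with a $C^2$ function near $x$. So $Reg$ is open, $u\in C^2(Reg)$, and $Du$ restricted to $Reg$ is absolutely continuous.

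\emph{Step 3: the exceptional set carries no $|Du|$.} Since $\operatorname{proj}$ is Lipschitz, $\mathcal{H}^{n-2}(\operatorname{proj}\Sigma)=0$. The total variation measure $|Du|$ vanishes on sets of zero $\mathcal{H}^{n-1}$ measure, which is a standard BV fact: $|Du|\ll\mathcal{H}^{n-1}$. Therefore $|Du|(\operatorname{proj}\Sigma)=0$, and the jump and Cantor parts of $Du$, which must be concentrated off $Reg$, are zero. This gives $Du=\nabla u\,dx$ with $\nabla u\in L^1_{loc}$, i.e.\ $u\in W^{1,1}_{loc}(\Omega)$.

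An alternative check for Step 3 uses Lemma \ref{lemma5.2}. The anisotropic area of $u$ equals the anisotropic perimeter of $U$, and this perimeter is concentrated on $\partial U\setminus\Sigma$ because $\mathcal{H}^n(\Sigma)=0$. Over $Reg$ that perimeter equals $\int F(Du,-1)\,dx$. So the singular part $F$-mass, $\int F(D^su/|D^su|,0)\,d|D^su|$, must be zero, and $F(\cdot,0)\ge\alpha|\cdot|$ then kills $D^su$.

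The main obstacle is Step 2, excluding multiple sheets or vertical pieces over points of $Reg$. I would rely on the fact that $\nu_{n+1}$ has a fixed sign on all of $\partial U\setminus\Sigma$. Combined with the orientation of the subgraph, this forces each vertical line to cross $\partial U$ once.
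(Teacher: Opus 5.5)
Your proof is correct, and its decisive step differs from the paper's. The paper asserts that $\nu_{n+1}<0$ on $\partial U\setminus\Sigma$ suffices for regularity of $u$ on $Reg$. It then splits $\int_A F(Du,-1)$, with $S=\operatorname{proj}\Sigma$, into the absolutely continuous part over $A\setminus S$ and the term $\int_{S\cap A}F(D^su,0)$, and shows that the latter vanishes by returning to the subgraph. By Lemma \ref{lemma5.2} the area equals the anisotropic perimeter of $U$. The set $\partial^*U\cap(S\times\mathbb{R})$ is $\mathcal{H}^n$-null, because $\mathcal{H}^n(\Sigma)=0$, $|S|=0$, and $\operatorname{proj}$ is a local diffeomorphism on $\partial U\setminus\Sigma$. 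Ellipticity, $F(\cdot,0)\ge\alpha|\cdot|$, then kills $D^su$. You instead combine $|Du|\ll\mathcal{H}^{n-1}$ (a consequence of the coarea formula) with $\mathcal{H}^{n-1}(S)=0$, which disposes of $D^su$ in one line with no further geometry. Your route is shorter and purely measure-theoretic, but it needs the dimensional estimate $\mathcal{H}^{n-1}(\operatorname{proj}\Sigma)=0$. The paper's perimeter argument, at this step, needs only $\mathcal{H}^n(\Sigma)=0$ (which already gives $|S|=0$), so it survives a much weaker singular-set bound; here the estimate $\mathcal{H}^{n-2}(\Sigma)=0$ supplies both. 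Your Steps 1--2 supply the regularity on $Reg$ that the paper only asserts. The uniqueness-of-crossing argument is more than you need: a single transversal crossing, with $U$ locally below the smooth sheet $t=\tilde u(y)$, already forces $u=\tilde u$ a.e.\ near $x$, because each slice $\chi_U(y,\cdot)=\chi_{(-\infty,u(y))}$ switches only once. Finally, your ``alternative check'' is essentially the paper's route, but as written it skips why the regular part of $\partial U$ lying over $\operatorname{proj}\Sigma$ carries no perimeter. That is exactly where the paper uses the local diffeomorphism and $|S|=0$.
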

\begin{proof}
Let $S=\operatorname{proj}\Sigma$. Since $\mathcal{H}^{n-2}(\Sigma)=0$, we have $\mathcal{H}^{n-2}(S)=0$, which implies the Lebesgue measure $|S|=0$.
For any open set $A\subset\subset\Omega$, we have
\[
\int_{A}F(Du,-1) = \int_{A\setminus S}F(Du,-1) + \int_{S\cap A}F(D^s u, 0),
\]
where $D^s u$ is the singular part of the Radon measure $Du$.

On the other hand, the anisotropic perimeter of the subgraph $U$ satisfies
\[
\int_{A}F(Du,-1) = P_F(U, A\times\mathbb{R}) = \int_{\partial^* U \cap (A\times\mathbb{R})} F(\nu^{U}) d\mathcal{H}^n.
\]
Since $\nu_{n+1} < 0$ on $\partial U \setminus \Sigma$, the projection map $\operatorname{proj}$ is a local diffeomorphism on $\partial U \setminus \Sigma$. Since $|S| = 0$ and $\mathcal{H}^n(\Sigma) = 0$, we have $\mathcal{H}^n(\partial^* U \cap (S \times \mathbb{R})) = 0$. Consequently,
\[
\int_{A}F(Du,-1) = P_F(U, (A \setminus S)\times\mathbb{R}) = \int_{A\setminus S}F(Du,-1).
\]
Comparing the two decompositions yields
\[
\alpha \int_{S\cap A} |D^s u| \leq \int_{S\cap A}F(D^s u, 0) = 0.
\]
Thus, the singular part $D^s u$ vanishes in $A$. Since $A\subset\subset\Omega$ is arbitrary, we conclude that $Du \in L^1_{loc}(\Omega)$, and hence $u\in W_{loc}^{1,1}(\Omega)$.

\end{proof}

\medskip
Since the anisotropic area functional $\mathcal{A}_F$ is strictly convex in $W^{1,1}(\Omega)$, we have:
\begin{proposition}\label{proposition5.4}
Let $\Omega$ be connected and let $\varphi\in L^1(\partial\Omega)$. If $u$ and $v$ are two minimizers of the functional
\[\int_{\Omega}F(Du,-1)dx+\int_{\partial\Omega}F(\nu,0)|u-\varphi|d\mathcal{H}^{n-1},\]
then $v=u+c$ for some constant $c \in \mathbb{R}$.
\end{proposition}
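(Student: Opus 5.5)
Let $u,v\in BV(\Omega)$ be two minimizers of $\mathcal{E}_F(\cdot,\Omega)$. My plan is to use convexity of $\mathcal{E}_F$ together with the regularity already established. Set $m=\mathcal{E}_F(u,\Omega)=\mathcal{E}_F(v,\Omega)$ and $z=\frac12(u+v)$. Both terms of $\mathcal{E}_F$ are convex: the area term because $F$ is convex and $1$-homogeneous, and the boundary term because $s\mapsto|s-\varphi|$ is convex. Hence $m\le\mathcal{E}_F(z,\Omega)\le\frac12\mathcal{E}_F(u,\Omega)+\frac12\mathcal{E}_F(v,\Omega)=m$. This forces equality separately in the area term and in the boundary term, since each piece satisfies its own convexity inequality. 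In particular
\[
\int_\Omega F\big(\tfrac12(Du+Dv),-1\big)=\tfrac12\int_\Omega F(Du,-1)+\tfrac12\int_\Omega F(Dv,-1).
\]

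Next I pass to Sobolev regularity. Any minimizer of $\mathcal{E}_F$ is also a local minimizer of $\mathcal{A}_F$ in $\Omega$, because compactly supported perturbations do not change the trace. Lemma \ref{lemma5.9} then gives $u,v\in W^{1,1}_{loc}(\Omega)$. So $Du,Dv$ are $L^1_{loc}$ functions and $\mathcal{A}_F(\cdot,A)=\int_A F(D\cdot,-1)\,dx$ on every $A\subset\subset\Omega$. The total measures have no singular part on compact subsets, and the interior of $\Omega$ is exhausted by such $A$. The global equality therefore localizes: the pointwise convexity defect
\[
\tfrac12F(Du,-1)+\tfrac12F(Dv,-1)-F\big(\tfrac12(Du+Dv),-1\big)\ge0
\]
has zero integral over $\Omega$, so it vanishes a.e. One point needs care here: $u,v$ are only shown to be $W^{1,1}_{loc}$, so singular parts near $\partial\Omega$ could sit in $\Omega$ but not in any compact subset. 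To handle this, I would argue measure-theoretically. The convexity inequality holds for the Radon measures $F(Du,-1)$, $F(Dv,-1)$ on $\Omega$, and the absolutely continuous parts on $\Omega$ are just the densities above. So equality of the totals still gives a.e. vanishing of the density defect on $\Omega$.

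The key step is strict convexity of $p\mapsto F(p,-1)$ on $\mathbb{R}^n$, which was already used in Proposition \ref{proposition3.1}. It follows from the positive definiteness of $F_{\xi\xi}(\xi)$ on $\xi^\perp$. The segment $\{(p,-1)\}$ never passes through the origin, and its direction $(p-q,0)$ is not parallel to $(p,-1)$, so the second derivative along the segment is strictly positive. Consequently the defect vanishes only where $Du=Dv$, so $Du=Dv$ a.e. in $\Omega$. Then $D(u-v)=0$ in $\mathcal{D}'(\Omega)$, and since $\Omega$ is connected, $u-v$ is constant.

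I expect the main obstacle to be the justification of the second paragraph, namely working with possibly singular gradients up to the boundary. If it proves delicate, I would first try the extension trick of Remark \ref{remark5.1}. There $\mathcal{E}_F(v,\Omega)$ equals $\mathcal{A}_F(v_\varphi,\mathcal{B})$ minus a fixed constant, so $u_\varphi$ and $v_\varphi$ both minimize $\mathcal{A}_F$ on $\mathcal{B}$ with the same exterior data. Lemma \ref{lemma5.9} cannot be applied across $\partial\Omega$, so only the interior regularity is used. The boundary jump terms $F(\nu,0)|u-\varphi|$ are then handled by the separate equality in the boundary term obtained in the first step.
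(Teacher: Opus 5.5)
Your proposal is correct and follows the paper's route. Convexity of both terms forces equality in the area term, Lemma \ref{lemma5.9} puts the minimizers in $W^{1,1}_{loc}$, and strict convexity of $p\mapsto F(p,-1)$ together with connectedness gives $u-v=\mathrm{const}$; the paper compresses all of this into two lines. Your worry about singular parts hiding near $\partial\Omega$ is unfounded: a Radon measure on $\Omega$ that vanishes on every $A\subset\subset\Omega$ vanishes on $\Omega$ by exhaustion, and $D(u-v)=0$ in $\mathcal{D}'(\Omega)$ only involves compactly supported test functions anyway, so the measure-theoretic workaround and the extension fallback are not needed.
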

\begin
{proof}
By the convexity of the boundary term and the strict convexity of $F(p,-1)$ with respect to $p$, we deduce  that $Du = Dv$ a.e. in $\Omega$. Since $\Omega$ is connected, we conclude that $u - v$ is constant.
\end{proof}
\medskip

Finally, we can prove the regularity theorem.
\begin{proof}[proof of Theorem \ref{theorem1.3}] The existence of a minimizer follows directly from Proposition~\ref{proposition5.3}. Thus, it remains to establish the Lipschitz regularity of the minimizer.
Let $B_R$ be a ball in $\Omega$, then for any $w\in BV(B_R)$, it follows from proposition \ref{proposition5.2} that
\[\int_{B_R}F(Du,-1) \leq\int_{B_R}F(Dw,-1) +\int_{\partial B_R}F(\nu,0)|w-u|d\mathcal{H}^{n-1}.\]
Since the singular set $S$ satisfies $\mathcal{H}^{n-2}(S)=0$, there exists a sequence of open sets $\{S_j\}$ such that
\[S_{j+1}\subset\subset S_j,\quad \cap_{j\in\mathbb{N}}S_j=S\]
and\[\mathcal{H}^{n-1}(S_j\cap\partial B_R)\to0.\]
Let $\varphi_j$ be a smooth function on $\partial B_R$ such that
\[\varphi_j=u\text{ in }\partial B_R\setminus S_j\quad\text{and}\quad \sup_{\partial B_R}|\varphi_j|\leq 2\sup_{\partial B_R}|u|.\]
Observe that the ball $B_R$ is strictly convex, and hence its boundary $\partial B_R$ has   non-negative anisotropic mean curvature. Therefore, by  theorem \ref{theorem1.1}, we can assume that $u_j$ be the unique solution of the anisotropic Dirichlet problem with boundary datum $\varphi_j$ on $\partial B_R$. Moreover, the function $u_j$ are smooth in $ {B}_R$ and
\[\sup_{B_R}|u_j|\leq 2\sup_{\partial B_R}|u|.\]
Therefore, for any $w\in BV(B_R)$, we have
\begin{equation}\label{e5.12}
\int_{B_R}F(Du_j,-1) \leq\int_{B_R}F(Dw,-1) +\int_{\partial B_R}F(\nu,0)|w-\varphi_j|d\mathcal{H}^{n-1}.
\end{equation}

From the priori estimate of the gradient (see proposition \ref{proposition4.1}), we deduce that $|Du_j|$ are equibounded in every compact set $K\subset\ B_R$. By the Arzela-Ascoli theorem, there exists a subsequence (still denote by $u_j$) that will converge uniformly on compact subsets of $B_R$ to a Lipschitz-continuous function $v$. Taking $w=0$ in \eqref{e5.12}, we have
\[\int_{B_R}F(Du_j,-1) \leq F(0',1)|B_R|+\Lambda\int_{\partial B_R}|\varphi_j|d\mathcal{H}^{n-1}\leq c,\]
and thus $v\in W^{1,1}(B_R)$.

Now, we show that $v$ has trace $u$ on $\partial B_R$. Let $y\in\partial B_R$ be a regular point for $u$. Then, for $j$ large enough, $y\in\partial B_R\setminus S_j$ and for all $k> j$, $\varphi_k=u$ in a neighborhood of $y$ in $\partial B_R$. Therefore, we can construct two functions $\varphi_+$ and $\varphi_-$ of class $C^2$ on $\partial B_R$, such that
\begin{itemize}
\item[($1$)] $\varphi_{\pm}=u$ in a neighborhood of $y$ in $\partial B_R$,
\item[($2$)] $\varphi_-\leq \varphi_k\leq\varphi_+$ in $\partial B_R$ for any $k>j$.
\end{itemize}
Let $u^{\pm}$ be the solutions of the anisotropic Dirichlet problem with boundary datum $\varphi^{\pm}$ respectively. From the anisotropic weak maximum principle ( see lemma \ref{lemma3.1}), we get
\[u^-\leq u_k\leq u^+\quad\text{for any }k>j,\]
and then we obtain
\[u^-\leq v\leq u^+.\]
Therefore, we conclude that $v=u$ at every regular point $y\in\partial B_R$ in the trace sense and due to $\mathcal{H}^{n-1}(S)=0$, we get $v=u$ on $\partial B_R$.

Passing to the limit in \eqref{e5.12}, we have
\begin{equation}\label{e5.13}
\int_{B_R}F(Dv,-1) \leq\int_{B_R}F(Dw,-1) +\int_{\partial B_R}F(\nu,0)|w-u|d\mathcal{H}^{n-1}.
\end{equation}
Since $v=u$ on $\partial B_R$, the function $v$ also minimizes the functional on the right-hand side of \eqref{e5.13}. From proposition \ref{proposition5.4}, $v=u+c$ for some constant $c \in \mathbb{R}$ and since $v=u$ on $\partial B_R$, we finally get $v=u$. Therefore, we conclude that $u$ is Lipschitz continuous in $\Omega$.

\end{proof}

\subsection*{Acknowledgements}
The authors would like to thank Professor Antonio De Rosa for  his valuable suggestions regarding the sharpness of the one-sided linear growth condition in the Bernstein-type result. We also thank Dr. Ling Wang for raising insightful questions that led to a correction in the proof of the interior gradient estimates.

\end{document}